\documentclass[a4paper,11pt,oneside,reqno]{amsart}
\usepackage{geometry}

\usepackage[english]{babel}
\usepackage{csquotes}

\usepackage{lmodern}
\usepackage{amsmath,amssymb}
\usepackage{mathtools}
\usepackage{bm}
\usepackage{stmaryrd,MnSymbol}
\usepackage{relsize}

\usepackage{xcolor}
\usepackage{enumitem}
\usepackage{xpatch}
\usepackage{keytheorems} 
\usepackage[textsize=tiny]{todonotes}

\usepackage{setspace}
\setdisplayskipstretch{}

\definecolor{linkblue}{RGB}{1,1,190}
\definecolor{citered}{RGB}{190,1,1}

\usepackage[linkcolor=linkblue
           ,urlcolor=linkblue
           ,citecolor=citered
           ,colorlinks
           ,bookmarksopen=True,
           ]{hyperref}

\makeatletter
  \AtBeginDocument{
    \hypersetup{
      pdftitle  = {\@title},
    }
  }
\makeatother
  
\usepackage[capitalize]{cleveref}

\newkeytheorem{theorem}[numberwithin=section]
\newkeytheorem{
  definition,
  lemma,
  proposition,
  corollary,
  question}
[numberlike=theorem]

\newkeytheorem{
  remark,
  remarks,
  example,
  examples
}[numberlike=theorem,style=remark]

\setlist[enumerate,1]{label=\textup{(\arabic*)}, ref=\textup{(}\arabic*\textup{)},
  itemsep=0.5em plus 0.15em minus 0.05em,
  topsep=0.5em plus 0.15em minus 0.05em,
  leftmargin=0.75cm}
\setlist[enumerate,2]{label=\textup{(\roman*)}, ref=\textup{(}\roman*\textup{)},
  itemsep=0.5em plus 0.15em minus 0.05em,
  topsep=0.5em plus 0.15em minus 0.05em}
\setlist[itemize, 1]{itemsep=0.5em plus 0.15em minus 0.05em,
  topsep=0.5em plus 0.15em minus 0.05em, leftmargin=0.75cm}

\newlist{equivenumerate}{enumerate}{1}
\setlist[equivenumerate,1]{%
  label=\textup{(\alph*)},
  ref=\textup{(}\alph*\textup{)},
  itemsep=0.5em plus 0.15em minus 0.05em,
  topsep=0.5em plus 0.15em minus 0.05em,
  leftmargin=0.75cm
}

\xpatchcmd{\paragraph}{\normalfont}{{\normalfont\bfseries}}{}{}

\usepackage[backend=biber,
            style=alphabetic,
            giveninits=true,
            url=false,
            isbn=false,
            maxnames=99]
           {biblatex}
\renewbibmacro{in:}{}
\DeclareFieldFormat[article,inbook,incollection,inproceedings,patent,thesis,unpublished]{title}{{#1\isdot}}
\DeclareFieldFormat[article,inbook,incollection,inproceedings,patent,thesis,unpublished]{note}{\textit{#1\isdot}}
\DeclareFieldFormat[article,inproceedings,incollection]{pages}{#1}
\DeclareFieldFormat{postnote}{#1} 

\renewbibmacro*{issue+date}{%
  \iffieldundef{year}{}{ 
  \printtext[parens]{%
    \printfield{issue}%
    \setunit*{\addspace}%
    \usebibmacro{date}}%
  \newunit}}

\AtBeginBibliography{\small}

\usepackage{microtype}

\newcommand{\defit}[1]{\textsf{#1}}

\newcommand{\N}{\mathbb N}
\newcommand{\Z}{\mathbb Z}
\newcommand{\Q}{\mathbb Q}

\newcommand{\C}{\mathbb C}

\newcommand{\cA}{\mathcal A}
\newcommand{\cB}{\mathcal B}
\newcommand{\cG}{\mathcal G}
\newcommand{\cH}{\mathcal H}
\newcommand{\cL}{\mathcal L}
\newcommand{\cM}{\mathcal M}
\newcommand{\cX}{\mathcal X}

\newcommand{\mahler}{\mathcal M}

\newcommand{\puipol}[2]{{#1}[{#2}^{\frac{1}{\infty}}]}
\newcommand{\puiser}[2]{{#1}(\!({#2}^{\frac{1}{\infty}})\!)}

\newcommand{\power}[1]{{#1}\llbracket x \rrbracket}
\newcommand{\ru}{\mu}  
\newcommand{\rucommon}{\mu_{[k]}} 

\newcommand{\algc}[1]{\overline{#1}}

\newcommand{\md}{\mathfrak d}

\DeclareMathOperator{\End}{End}
\DeclareMathOperator{\ann}{ann}
\DeclareMathOperator{\val}{\mathsf v}

\title{Mahler series with multiplicative coefficient sequences}

\author{Jason Bell}
\address{Department of Pure Mathematics\\ University of Waterloo\\ Waterloo, ON\\ Canada N2L 3G1}
\email{jpbell@uwaterloo.ca}

\author{Daniel Smertnig}
\address{Faculty of Mathematics and Physics (FMF)\\
  University of Ljubljana
  and Institute of Mathematics, Physics and Mechanics (IMFM)\\
  Jadranska ulica 21\\
  1000 Ljubljana, Slovenia}
\email{daniel.smertnig@fmf.uni-lj.si}

\thanks{Bell was supported by NSERC grant RGPIN-2022-02951. Smertnig was supported by the Slovenian Research and Innovation Agency (ARIS) program P1-0288 and grant J1-60025.}

\subjclass[2020]{Primary 11B85; Secondary 11N64, 39A06}
\keywords{Mahler series, Mahler functions, multiplicative sequences, regular sequences, automatic sequences}

\begin{document}

\begin{abstract}
  \begin{singlespace}
    We prove that every Mahler series, over a field of characteristic $0$, with multiplicative coefficients is regular in the sense of Allouche and Shallit.
    We also obtain an explicit characterization of such series.
    This yields a joint extension of the characterization of rational series with multiplicative coefficients (by Bézivin and Bell--Bruin--Coons) and of multiplicative automatic sequences (by Konieczny–Lemańczyk–Müllner).
    Both of these results are used in our characterization, so we do not obtain new proofs of these special cases.
  \end{singlespace}
\end{abstract}

\date{}

\maketitle

\section{Introduction}

A sequence $f\colon \N \to K$, over a field $K$ of characteristic $0$, is \defit{multiplicative} if $f(mn)=f(m)f(n)$ whenever $m$ and $n$ are coprime.
Multiplicative sequences are ubiquitous in number theory, for instance, the Möbius function, the Euler totient function, and the divisor function are multiplicative.
However, if a multiplicative sequence $f$ additionally satisfies algebraic-combinatorial properties, it has a much more restricted behavior.

For instance, if the generating function of $f$ is algebraic, there is the following characterization.

\begin{theorem}[{\cite[Theorem 1.5]{bell-bruin-coons12}}] \label{t:mult-lrs}
  Let $K$ be a field of characteristic $0$.
  Let $F = \sum_{n=0}^\infty f(n) x^n \in \power{K}$ be an algebraic series over $K(x)$.
  If $f\colon \N \to K$ is multiplicative, then there exist $r \ge 0$ and an eventually periodic multiplicative function $\chi\colon \N \to K$ such that
  \[
  f(n) = n^r \chi(n) \qquad\text{for all } n \ge 1.
  \]
\end{theorem}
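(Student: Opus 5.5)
A natural approach is to first reduce to the rational case and then deal with the latter through its linear recurrence structure.

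\emph{Reduction to rational series.} Since $F$ is algebraic over $K(x)$, its coefficients satisfy a polynomial-coefficient recurrence, and by standard asymptotics (after embedding the finitely generated field of definition into $\C$), $f(n)$ has moderate growth. The multiplicative structure then strongly restricts the singularities of $F$. For a prime $p$ and $m$ coprime to $p$, multiplicativity gives $f(p^k m) = f(p^k)f(m)$. Hence the $p$-section $\sum_{(m,p)=1} f(m)x^m$, which is again algebraic, controls $F$ up to the factors $f(p^k)$. I would use the classical Fatou/Pólya--Carlson type dichotomy: a $D$-finite series with coefficients in a finitely generated ring and radius of convergence $1$ is rational or has the unit circle as natural boundary. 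Since algebraic series have only finitely many singularities, this forces $F$ to be rational once we know the coefficients are suitably integral. Integrality would come from Eisenstein's theorem: there is an $N$ with $N^n f(n)$ integral over a finitely generated $\Z$-algebra. Multiplicativity then bootstraps this: the denominators of $f(p)$ for large primes $p$ must be bounded, because $f(n)=\prod f(p_i^{e_i})$ and Eisenstein allows only primes dividing $N$ in denominators. Rescaling suitably handles the remaining primes.

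\emph{The rational case.} Once $f$ is a linear recurrence sequence, write
\[
f(n)=\sum_i P_i(n)\alpha_i^n .
\]
Multiplicativity applied to $f(mn)=f(m)f(n)$ for coprime $m,n$ in arithmetic progressions, together with the Skolem--Mahler--Lech theorem and a growth comparison, should force every $|\alpha_i|=1$ under every embedding. Then Kronecker's theorem makes each $\alpha_i$ a root of unity. Consequently $f(n)=\sum_j Q_j(n)\,\mathbf 1_{n\equiv j \bmod M}$ is quasi-polynomial. Plugging in $f(mn)=f(m)f(n)$ with $m$ ranging over a residue class coprime to $n$ shows that all the $Q_j$ are monomials of the same degree $r$. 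This gives $f(n)=n^r\chi(n)$ with $\chi$ periodic on residues coprime to $M$. The finitely many primes dividing $M$ contribute eventually periodic behavior in the prime powers, so $\chi$ is eventually periodic and multiplicative.

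\emph{Main obstacle.} I expect the hardest step to be the first one: passing from algebraic to rational. This requires combining Eisenstein-type integrality with multiplicativity to get integrality of almost all $f(p)$, and then invoking the Pólya--Carlson/Fatou dichotomy in characteristic $0$ over a general field $K$ (via specialization to a number field). My first attempt would be to specialize $K$ to a number field while preserving both algebraicity and multiplicativity, using Eisenstein and multiplicativity to bound denominators uniformly, and then apply Pólya--Carlson at each archimedean place.
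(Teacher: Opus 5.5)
The paper does not prove this statement; it quotes it from \cite[Theorem 1.5]{bell-bruin-coons12} (and \cite{bezivin95} for $K=\mathbb{C}$). Your outline therefore has to stand on its own, and it has a genuine gap exactly at the step you flag as the main obstacle.

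First, the dichotomy you invoke is false without integrality. The series $(1-z)^{-1/2}=\sum_{n\ge 0}\binom{2n}{n}4^{-n}z^n$ is algebraic, has coefficients in the finitely generated ring $\mathbb{Z}[1/2]$ and radius $1$, is irrational, and continues analytically across every point of $|z|=1$ except $z=1$. Fatou's theorem and P\'olya--Carlson, including their adelic refinements over number fields, need at least radius $\ge 1$ under every complex embedding and $v$-adic radius $\ge 1$ at every finite place. Your bootstrap delivers neither.

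Eisenstein only gives $v_\ell(f(n))\ge -cn$ for $\ell\mid N$. Multiplicativity, $v_\ell(f(n))=\sum_i v_\ell(f(p_i^{e_i}))$, is compatible with this for any choice of $v_\ell(f(p^e))\ge -c\,p^e$. For instance, setting $f(p^e)=2^{-p}$ gives a multiplicative $f$ with $2^n f(n)\in\mathbb{Z}$ for all $n$ but unbounded denominators at primes. And $f(n)=2^{-\omega(n)}$ shows that bounded denominators at primes still allow unbounded denominators overall. Excluding such $f$ is part of the content of the theorem and has to use algebraicity essentially. The substitution $z\mapsto Nz$ clears denominators only by dividing the radius of convergence by $N$, which destroys the hypothesis you need.

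Likewise, algebraicity gives only a positive radius, not radius $\ge 1$. At an archimedean place this could be extracted from multiplicativity: for a non-prime-power $N=mn$ with coprime $2\le m<n$ one has $m+n\le N/2+2$, so $|f(N)|=|f(m)||f(n)|\ll N^{a}\rho^{N/2+2}$. This contradicts the growth $\gg N^{\gamma}\rho^{N}$ that a dominant singularity of modulus $1/\rho<1$ forces on a set of positive density. You do not argue this, however, and at the finite places dividing $N$ you would need a $v$-adic substitute that the proposal does not provide.

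Your rational case has a smaller hole of the same kind. Kronecker's theorem needs algebraic integers, and $|\sigma(\alpha_i)|=1$ for all complex embeddings $\sigma$ does not give that, as $(3+4i)/5$ shows. You must also prove $|\alpha_i|_v\le 1$ at the finite places. This is repairable: for an LRS the dominant part is $v$-adically analytic on residue classes, so the same non-prime-power comparison runs $v$-adically.

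Finally, after specializing $K$ to number fields you need a descent back to $K$. For example, Gauss's lemma applied to the algebraic equation bounds the degree of every specialized rational function uniformly, and vanishing of Hankel determinants then gives rationality of $F$ itself.

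These last points are secondary. The missing idea is how algebraicity and multiplicativity together produce the integrality and radius conditions at all places that any Fatou-type argument requires.
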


Every eventually periodic multiplicative function $\chi$ is easily seen to be eventually zero or to be periodic (\cref{l:mult-eventually-periodic}).

\Cref{t:mult-lrs} was proved by Bézivin \cite{bezivin95} for $K=\mathbb C$.
The result for arbitrary fields of characteristic $0$ is due to Bell, Bruin, and Coons \cite{bell-bruin-coons12}.
For $K=\C$, the theorem also holds under the weaker assumption that $F$ is $D$-finite \cite[Theorem 1.6]{bell-bruin-coons12}.
\Cref{t:mult-lrs} applies in particular to the case where $F$ is rational, or, equivalently, the coefficients $f$ form a linear recurrence sequence (LRS).

In a different direction, a sequence $f$ is $k$-automatic (for some integer $k \ge 2$) if there exists a finite state automaton that computes $f(n)$ when given as input the base-$k$ expansion of $n$ \cite{allouche-shallit92,allouche-shallit03}. 
The following result of Konieczny, Lemańczyk, and Müllner characterizes the multiplicative $k$-automatic sequences.

\begin{theorem}[\cite[Theorem 1.1]{konieczny-lemanczyk-muellner22}] \label{t:mult-auto}
  Let $L$ be a field \textup(of arbitrary characteristic\textup) and $f\colon \N \to L$ a multiplicative $k$-automatic sequence.
  Then $f$ is $p$-automatic for some prime $p$ and takes the form
  \begin{equation} \label{eq:mult-auto}
  f(p^i m) = g(i) \chi(m) \qquad\text{for all $i \ge 0$ and $m \in \N$ with $p \nmid m$},
  \end{equation}
  where $g\colon \Z_{\ge 0} \to L$ is eventually periodic with $g(0)=1$, and $\chi\colon \N \to L$ is a multiplicative eventually periodic function.

  Furthermore, any sequence given by \eqref{eq:mult-auto} with these conditions is multiplicative and $p$-automatic and this decomposition is unique unless $f$ is eventually periodic.
\end{theorem}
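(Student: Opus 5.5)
The converse direction is the easy part, so I would do it first. Suppose $f$ is given by \eqref{eq:mult-auto} with $g$ eventually periodic, $g(0)=1$, and $\chi$ multiplicative and eventually periodic. Multiplicativity follows from the uniqueness of the decomposition $n=p^i m$ with $p\nmid m$. For $p$-automaticity, I would check that the $p$-kernel is finite:
\begin{itemize}
\item a kernel element $n\mapsto f(p^a n+b)$ with $0\le b<p^a$ and $b\ne 0$ only sees the $p$-free part $p^a n+b$, up to a bounded power of $p$ determined by $b$;
\item modulo a period $N$ of $\chi$, the residue of the $p$-free part of $p^a n + b$ depends only on finitely many data, namely $a$ modulo the multiplicative order of $p$ on the prime-to-$p$ part of $N$, together with $b$ modulo $N$;
\item the case $b=0$ shifts $i$ by $a$, and since $g$ is eventually periodic this again gives only finitely many sequences.
\end{itemize}
For uniqueness, I would note that if two decompositions exist with different primes $p\ne p'$, then $g$ and $g'$ are constant on the tails of $i$, and this forces $f$ to be eventually periodic.

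For the main direction, I would proceed in three steps.

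\textbf{Step 1: $f$ is eventually periodic on integers coprime to $k$.} Fix a prime $q\nmid k$. Multiplicativity gives $f(qn)=f(q)f(n)$ for all $n$ coprime to $q$. Both $n\mapsto f(qn)$ and $n\mapsto f(n)$ are $k$-automatic and have finite range. I would look at the kernel elements $n\mapsto f(k^a n+b)$ and, for large $a$, choose $b$ so that $k^a n+b$ runs through residue classes coprime to a given modulus. The idea is that multiplying by $q$ (which is a unit modulo powers of $k$) permutes the kernel's residue classes. Combining these relations for several primes $q$, together with the finiteness of the kernel, I expect to get that $f(m)$ for $\gcd(m,k)=1$ depends only on $m$ modulo some $N$, once $m$ is large. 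This is where I would try a Cobham/Semenov-type rigidity argument: a set of integers recognized by a $k$-automaton and invariant under multiplication by infinitely many primes that are multiplicatively independent of $k$ should be eventually periodic. I expect this step to be the main obstacle, because the relation $f(qn)=f(q)f(n)$ only holds off the multiples of $q$, and $f(q)$ may vanish.

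\textbf{Step 2: at most one prime divisor of $k$ carries non-periodic behaviour.} Let $k=p_1^{e_1}\cdots p_s^{e_s}$. The sequences $i\mapsto f(p_j^i)$ take finitely many values. Restricting $f$ to $p_j^i m$ with $m$ in a fixed residue class coprime to $k$ and using the kernel, each of these sequences should be eventually periodic except possibly for one index $j$, which I call $p$. The reason is that simultaneous non-periodic dependence on the $p_1$-adic and $p_2$-adic valuations cannot be recognized by a single base-$k$ automaton. Here I would again use a Cobham-type argument, comparing bases $p_1$ and $p_2$ inside base $k$.

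\textbf{Step 3: assemble the decomposition.} I would absorb the eventually periodic prime-power behaviour at the $p_j\ne p$ into $\chi$, which stays multiplicative and eventually periodic, and set $g(i)=f(p^i)$. This gives \eqref{eq:mult-auto}. Finally, $p$-automaticity of $f$ follows from the converse direction already proved.
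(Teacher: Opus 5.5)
\textbf{There is a genuine gap: the main direction is not proved.} For context, the paper does not prove this statement itself. It quotes Konieczny--Lema\'nczyk--M\"ullner's theorem over $\C$ and only adds the transfer to an arbitrary field $L$. Since the image of $f$ is finite, it generates a finitely generated multiplicative semigroup, which embeds injectively into $(\C,\cdot)$ by the lemma in the preliminaries. One applies the complex result to $\varphi\circ f$ and pulls $g$ and $\chi$ back.

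You instead try to reprove the theorem from scratch. Your converse direction is essentially fine, with two repairs. You must handle the preperiod of $\chi$ and the term $n=0$ separately. For uniqueness of $p$, use Cobham's theorem, since $f$ would be both $p$- and $p'$-automatic; your claim that $g$ and $g'$ are constant on tails is unjustified.

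In the prime-power case $k=p^e$, Step~2 is vacuous and $i\mapsto f(p^i)$ is automatically eventually periodic. So Step~1 \emph{is} the entire theorem there, and you offer only an expectation. Worse, the rigidity principle you propose is false as stated. Take $f(n)=(-1)^{\val_p(n)}$ with $k=p$: it is multiplicative and $p$-automatic, and each level set $E$ satisfies $n\in E\iff qn\in E$ for every prime $q\neq p$. Yet $f$ is not eventually periodic; this is exactly the $g(\val_p(n))$ factor the theorem permits. Any correct version must be localized to integers prime to $k$. It must also cope with the obstacles you note yourself: $f(qn)=f(q)f(n)$ holds only for $q\nmid n$, and it says nothing when $f(q)=0$. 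Supplying that argument is precisely the content of the cited paper.

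\textbf{Steps~2 and~3 also fail as written.} Step~2 is again a heuristic, and it cannot be a pure automata-theoretic fact. Take the $6$-automatic sequence ``leading base-$6$ digit of $n$'': neither $i\mapsto f(2^i)$ nor $i\mapsto f(3^i)$ is eventually periodic. So multiplicativity must enter in a way you do not specify.

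Step~3 is an invalid inference even if Steps~1 and~2 are granted. Eventual periodicity of $i\mapsto f(p_j^i)$ in the exponent does not make $m\mapsto f(p_j^{\val_{p_j}(m)})$ eventually periodic in $m$. Consider $k=6$, $p=2$, $f(3^i)=(-1)^i$, and $f\equiv 1$ on integers prime to $6$; these data are compatible with your Steps~1 and~2. Your $\chi$ would then be $m\mapsto(-1)^{\val_3(m)}$ on odd $m$, which is not eventually periodic. What is actually true when $k$ is not a prime power is that $f$ is eventually periodic outright. Proving this requires using automaticity again, not just the output of Steps~1 and~2.

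Thus the proposal reduces the theorem to two unproved claims, one of them false in the form stated, plus an invalid assembly step. In the context of this paper, the intended route is the citation together with the semigroup embedding.
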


A $k$-automatic sequence is a special case of a $k$-regular sequence (in the sense of Allouche and Shallit), which is in turn a special case of the coefficient sequence of a $k$-Mahler series (we recall the notions in \cref{sec:preliminaries}).
The class of $k$-Mahler series is a natural generalization of rational series that has recently been extensively studied, see for instance \cite{adamczewski-faverjon17,roques18,adamczewski-faverjon18,bell-chyzak-coons-dumas19,faverjon-poulet22,adamczewski-bell-smertnig23,adamczewski-dreyfus-hardouin-wibmer24,adamczewski-faverjon24,faverjon-roques24,faverjon-roques26}.

The class of $k$-Mahler series is essentially disjoint from the class of $D$-finite series, in that the only series that are both $k$-Mahler and $D$-finite are the rational series \cite{bezivin94,bell-coons-rowland13}.
In light of this, it is natural to ask for a characterization of $k$-Mahler series with multiplicative coefficients.
This is the main result of the present paper, and constitutes a joint extension of the rational series case of \cref{t:mult-lrs} and of \cref{t:mult-auto}.

\begin{theorem} \label{t:mult-mahler-main}
  Let $K$ be a field of characteristic $0$, let $k \ge 2$, and let $F = \sum_{n=0}^\infty f(n) x^n$ be a $k$-Mahler series.
  If $f\colon \N \to K$ is multiplicative, then $f$ is $k$-regular.
  Further, there exist a prime $p$, a linear recurrence sequence $g\colon \Z_{\ge 0} \to K$ with $g(0)=1$, an integer $r \ge 0$, and a multiplicative eventually periodic function $\chi\colon \N \to K$ such that
  \begin{equation} \label{eq:explicit-formula}
    f(p^i m) = g(i) m^r \chi(m) \qquad\text{for all $i \ge 0$ and $m \in \N$ with $p \nmid m$}.
  \end{equation}
\end{theorem}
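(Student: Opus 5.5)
Our strategy is to reduce to the two cited special cases, \cref{t:mult-lrs} for rational series and \cref{t:mult-auto}, by analysing how the multiplicative structure interacts with the $k$-Mahler equation. Once the explicit form \eqref{eq:explicit-formula} is established, regularity follows directly. If $g$ is an LRS and $m \mapsto m^r\chi(m)$ is a product of a polynomial with a periodic sequence, then both $i \mapsto g(i)$ and the second factor are $p$-regular. The $p$-adic valuation $n \mapsto \val_p(n)$ and the $p$-free part $n/p^{\val_p(n)}$ are handled by a finite automaton reading base-$p$ digits, so $f$ is $p$-regular. We then need $k$ and $p$ to be multiplicatively dependent; otherwise Cobham-type results for Mahler series force $f$ to be essentially rational, and the case $g$ geometric, i.e.\ $f(n)=n^{r'}\chi'(n)$, is $k$-regular for every $k$. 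Thus the formula implies the first assertion.

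For the formula, the first step is to use the structure theory of $k$-Mahler series. The coefficient sequence of a $k$-Mahler series grows at most like $\exp(O(\log^2 n))$. Moreover, by results of Adamczewski--Bell--Smertnig type cited in the introduction, a $k$-Mahler series is obtained from a $k$-regular series by multiplication by an inverse of a suitable product of cyclotomic-type factors $\prod_j (1-x^{k^j d})$-style denominators. Concretely, there is a polynomial $P$ with $P(x)F(x)$ having a $k$-regular coefficient sequence $h$. Next, for each prime $q \nmid k$, we restrict to arithmetic progressions. For $m$ coprime to a fixed $N$, multiplicativity gives $f(q^i m)=f(q^i)f(m)$. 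We use this, together with the finiteness of the $k$-kernel modulo the denominator, to show that for all but one prime $p$, the sequences $i \mapsto f(q^i)$ are governed uniformly, so that $f(q^i)=q^{ir}\chi(q^i)$.

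The key device is to pick $m$ with $f(m)\ne 0$. The sequence $n\mapsto f(mn)$ for $n$ in a residue class mod $m$ is a $k$-Mahler-like section. Comparing it to $f(m)f(n)$ shows that the $k$-kernel relations must be compatible with the dilation $n \mapsto mn$. For dilations by $m$ coprime to $k$, this forces rationality-type behaviour, since a dilation-invariant Mahler relation degenerates to a linear recurrence in the $q$-adic direction. Applying \cref{t:mult-lrs} to the rational series produced this way yields $n^r\chi(n)$ on the integers coprime to the exceptional prime. Along powers of $p$, the Mahler equation shows that $i\mapsto f(p^i)$ satisfies a linear recurrence, giving the LRS $g$. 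When the polynomial part is trivial ($r=0$ and bounded values), we reduce to \cref{t:mult-auto} to pin down that at most one prime is exceptional.

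The main obstacle is the dilation step: proving that multiplicativity forces all but a single prime to behave polynomially-periodically, i.e.\ extracting a genuinely rational subseries from a Mahler series. I would first try passing to the $p'$-part by considering $\sum_{\gcd(n,k)=1} f(n)x^n$. This remains Mahler, and using the growth bound, I would show it must be $D$-finite-compatible and hence rational by the Bézivin / Bell--Coons--Rowland dichotomy. Then \cref{t:mult-lrs} applies, with \cref{t:mult-auto} used to control the residual exceptional prime.
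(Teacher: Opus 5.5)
Your first paragraph is fine. Once \eqref{eq:explicit-formula} is known, $f$ is $p$-regular. If $p$ and $k$ are multiplicatively independent, \cref{t:cobham-mahler} and \cref{t:mult-lrs} give $f(n)=n^r\chi(n)$, which is $k$-regular.

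The gap is that nothing after that actually establishes the formula, and the facts you rely on are false for general $k$-Mahler series.
\begin{itemize}
\item There is in general no polynomial $P$ with $PF$ $k$-regular. What \cite{adamczewski-bell-smertnig23} provides, and what the paper uses in \cref{t:mahler-denominators}, is that $F\prod_{j\ge 0}P(x^{k^j})$ is $k$-regular, with an \emph{infinite} product. For example, $F=\prod_{j\ge 0}(1-2x^{k^j})^{-1}$ satisfies $(1-2x)F(x)=F(x^k)$ and has infinitely many poles in the open unit disk. So no $PF$ has polynomially bounded coefficients, as a $k$-regular sequence must.
\item The bound $\exp(O(\log^2 n))$ already fails for the $k$-Mahler series $1/(1-2x)$ (\cref{exm:rational-is-mahler}).
\item For that same series, $f(p^i)=2^{p^i}$ is not an LRS. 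So ``the Mahler equation shows $i\mapsto f(p^i)$ is an LRS'' is unavailable until regularity is known; the paper reads it off a linear representation \eqref{eq:lin-rep}.
\item Your dilation heuristic (third and fourth paragraphs) is never turned into an argument; you flag it yourself as the main obstacle.
\item Your final plan is to show that $\sum_{\gcd(n,k)=1}f(n)x^n$ is $D$-finite and then invoke the Bézivin / Bell--Coons--Rowland dichotomy. You give no mechanism for $D$-finiteness, which is as hard as the theorem, and the growth bound it was supposed to come from is false.
\end{itemize}

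The missing idea is to prove $k$-regularity \emph{first}, via \cref{t:abs-regular}: $F$ is $k$-regular if and only if $\md_kF$ is negligible. Multiplicativity is exploited through root-of-unity averages $\sum_j F(\omega^jx)$, where $\omega\in\mu_q^*$ for a large prime $q\nmid k$. The prime $q$ is chosen so that $\md_kF$ is coprime to its twists $(\md_kF)(\omega^jx^{k^i})$ (\cref{l:q-roots,l:coprime}), and the denominator calculus of \cref{t:mahler-denominators} does the rest. There are two cases.
\begin{itemize}
\item Suppose arbitrarily large primes have $f(qn)\ne f(q)f(n)$ for some $n$. Then $G_q=\sum_jF(\omega^jx)-qf(q)F(x^q)$ is supported on multiples of $q^2$ and is $k$-regular. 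This makes $\sum_j\omega^{-j}F(\omega^jx)$ regular and forces $\md_kF$ to be negligible (\cref{p:bad-q-mean-regular}).
\item Suppose instead some suitable $q$ has $f(qn)=f(q)f(n)$ for all $n$. Take an operator $\cL_0$ with constant term $\md_kG$ annihilating $G=\sum_{j\ge 1}F(\omega^jx)$. Working in skew polynomial rings in $\mahler_k$ and $\mahler_q$, the stabilization of annihilators (\cref{p:annihilator-gen,c:stabilized-mahler-op-constant}) together with \cref{t:cobham-mahler} shows that $\cL_0F$ is rational. Then \cref{p:md-for-rational} excludes roots of $\md_kF$ in $\ru(\algc{K})\setminus\rucommon(\algc{K})$ (\cref{t:special-mult-implies-regular}).
\end{itemize}
Only then is \eqref{eq:explicit-formula} derived. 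Over a finitely generated ring, the reductions of the now-regular $f$ modulo maximal ideals are automatic. \Cref{t:mult-auto} and \cref{c:cobham} make the $p$-free part eventually periodic modulo every maximal ideal. \cite[Theorem 3.11]{bell05-cobham} lifts this to rationality, and \cref{t:mult-lrs} finishes.
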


Observe that \eqref{eq:explicit-formula} means that $f$ splits into a product of two multiplicative functions: the first one, namely $p^i m \mapsto g(i)$, is non-trivial (that is, not identical to $1$) only on powers of $p$; the second one, given by $p^i m \mapsto m^r \chi(m)$, is non-trivial only on integers coprime to $p$.
The first factor is $p$-regular, but in general not an LRS, while the second factor is both $k$-regular and an LRS.

It is easy to see that every sequence $f$ as in  \eqref{eq:explicit-formula} is multiplicative and $p$-regular, so \cref{t:mult-mahler-main} gives a full characterization.
Aside from obvious obstructions, the representation is also unique (see \cref{p:uniqueness}).

The theorem really splits into two cases: if $k$ is not a prime power, then in fact $f$ is an LRS, and there is the representation $f(n) = n^r \chi(n)$ for all $n \ge 1$.
A representation of the form as in \eqref{eq:explicit-formula} then holds for every prime $p$ (\cref{l:mult-rational-prime-repr}).
In particular, we have the following.

\begin{corollary} \label{c:mult-mahler-nonprime}
  Let $K$ be a field of characteristic $0$, let $k \ge 2$, and let $F = \sum_{n=1}^\infty f(n) x^n$ be a $k$-Mahler series.
  If $k$ is not a prime power, then $F$ is rational.
\end{corollary}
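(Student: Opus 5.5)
The plan is to derive the corollary from \cref{t:mult-mahler-main} together with the remark preceding it, which says that for $k$ not a prime power, $f$ is an LRS with $f(n) = n^r\chi(n)$ for $n \ge 1$. Since that remark is part of the case analysis of the main theorem rather than its statement, I would instead argue directly from the statement, using a classical fact about Mahler series.

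First, recall the fact (Cobham's theorem for Mahler functions, due to Adamczewski--Bell): if $k$ and $\ell$ are multiplicatively independent and $F$ is both $k$-Mahler and $\ell$-Mahler, then $F$ is rational. By \cref{t:mult-mahler-main}, $f$ has the form \eqref{eq:explicit-formula} for some prime $p$, and every such sequence is $p$-regular. Hence $F$ is also $p$-Mahler, because the generating function of a $p$-regular sequence is $p$-Mahler (Becker). If $k$ is not a prime power, then $k$ and $p$ are multiplicatively independent: a relation $k^a = p^b$ with $a,b\ge1$ would force $k$ to be a power of $p$. Therefore $F$ is rational.

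The main obstacle is that this route invokes the Adamczewski--Bell theorem, which is deep and external to the paper. If I want to avoid it, the alternative is to show directly that $g$ is forced to have the form $g(i) = p^{ir}\chi(p^i)$ for an eventually periodic extension of $\chi$. The idea is to take a prime $q \mid k$ with $q \ne p$ (one exists since $k$ is not a prime power) and use the $k$-kernel relations, restricted to residue classes modulo a suitable power of $q$, to compare values along $p$-power towers. Then $f(n) = n^r\tilde\chi(n)$ with $\tilde\chi$ eventually periodic, and the generating function of $n^r\tilde\chi(n)$ is rational. This comparison step is the delicate part, so I would rely on the citation.
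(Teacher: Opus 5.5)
Your argument is correct (reading the corollary, as intended, with $f$ multiplicative), but it takes a different route from the paper.

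\textbf{The paper's route.} It uses only the first conclusion of \cref{t:mult-mahler-main}, namely that $F$ is $k$-regular, and then invokes \cref{p:regular-distinct-primes}. That proposition reduces modulo maximal ideals of a finitely generated ring. It applies the Konieczny--Lema\'nczyk--M\"ullner classification and the classical Cobham theorem to the reductions, which are automatic. It then lifts back using Bell's theorem.

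\textbf{Your route.} You use the explicit form \eqref{eq:explicit-formula} to get $p$-regularity, pass to $p$-Mahler via Becker, and finish with the Cobham theorem for Mahler series. That last theorem is not external to the paper. It is \cref{t:cobham-mahler}, which the paper already uses in the proof of \cref{p:annihilator-gen}, so the vague alternative in your last paragraph is unnecessary.

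\textbf{The one step to justify.} You leave unjustified that every sequence of the form \eqref{eq:explicit-formula} is $p$-regular; the paper also only says this is ``easy to see''. One line suffices.
\begin{itemize}
\item The $p$-kernel of $n \mapsto g(\val_p(n))$ consists of constants and the sequences $n \mapsto g(e+\val_p(n))$. These span a finite-dimensional space because $g$ is an LRS.
\item The factor $m^r = n^r p^{-r\val_p(n)}$ is a Hadamard product of $p$-regular sequences.
\item The sequence $n \mapsto \chi(n/p^{\val_p(n)})$ is $p$-automatic.
\end{itemize}

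\textbf{Trade-off.} The paper's deduction is immediate once \cref{p:regular-distinct-primes} is available. Yours is a detour, because for $k$ not a prime power the explicit form is itself produced in \cref{p:explicit-form} by first proving rationality via \cref{p:regular-distinct-primes} and then applying \cref{l:mult-rational-prime-repr}. What your route does show is that the corollary follows formally from the explicit-form conclusion together with \cref{t:cobham-mahler}, without revisiting the automatic-sequence machinery.
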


On the other hand, if $k=p^e$, then \eqref{eq:explicit-formula} holds for the particular prime $p$.
If $f$ is not an LRS, then $p$ is the unique such prime (by Adamczewski and Bell's generalization of Cobham's theorem \cite{adamczewski-bell17}, as otherwise $f$ is $p$- and $q$-regular for two distinct primes; see also \cite{schaefke-singer19}).

To consider some examples, the sequence of $p$-adic valuation $f(n) = \val_p(n)$ is multiplicative and $p$-regular but not an LRS.
Here only the first factor of \eqref{eq:explicit-formula} is non-trivial.
The function $f(n) = n / 2^{\val_2(n)}$ is multiplicative and $2$-regular but not an LRS. 
In this example both factors of \eqref{eq:explicit-formula} are non-trivial.

As a consequence of \cref{t:mult-mahler-main}, it is easy to see that various multiplicative functions arising in number theory are not $k$-Mahler for any $k \ge 2$, for instance, the divisor function $\sigma_l(n) = \sum_{d \mid n} d^l$, the Euler totient function, and more generally Jordan's totient function.
(For finitely valued multiplicative functions, such as the Möbius function, this is already clear from \cref{t:mult-auto} together with the fact that every finitely-valued $k$-Mahler series is $k$-automatic \cite[Theorem 11.1]{adamczewski-bell-smertnig23}.)

The proof strategy is as follows.
We first prove the characterization in the main result under the stronger hypothesis that $F$ is $k$-regular, in \cref{sec:regular-case}.
The tools here are the Konieczny--Lemańczyk--Müllner characterization of multiplicative automatic sequences, and a lifting argument modulo maximal ideals of a finitely generated ring to the regular case \cite[Theorem 3.11]{bell05-cobham}.

It then remains to show that if $F$ is $k$-Mahler with multiplicative coefficients, then $F$ is $k$-regular.
This splits into two cases.
In the first case there are arbitrarily large primes $q$ such that $f(qn) \ne f(q) f(n)$ for some $n \in \N$.
This case is treated in \cref{sec:mult-mahler-non-compl-mult}. 
The key tools here are the classification of $k$-regularity of Mahler series in terms of the Mahler denominator \cite[Theorem 10.4]{adamczewski-bell-smertnig23}, and properties of the Mahler denominator that are established in \cref{sec:mahler-denominator} (and whose proofs themselves use the results from \cite{adamczewski-bell-smertnig23}).

For the second case we first show some algebraic results on Mahler operators in \cref{sec:rings-mahler-operators}.
The proof, in \cref{sec:mult-mahler-completely-multiplicative}, then uses the minimal inhomogeneous Mahler equation for $F$ and again the characterization of $k$-regularity from \cite{adamczewski-bell-smertnig23}.

The two cases are in fact not disjoint: in the second case, it suffices to have one sufficiently large prime $q$ such that $f(qn) = f(q)f(n)$ for all $n \in \N$.

\smallskip
\paragraph{Notation} Throughout the paper, let $K$ be a field of characteristic $0$, let $k \ge 2$ be an integer, and let $K \subseteq \algc{K}$ be an algebraic closure.
By $\N=\{1,2,3,\dots\}$ we denote the positive integers.
We write $\mu_n(K)$ for the $n$-th roots of unity in $K$, write $\mu_n^*(K)$ for the primitive $n$-th roots of unity, and write $\ru(K)$ for all roots of unity.
Finally, we write $\rucommon(K)$ for the set of all roots of unity in $K$ whose order has a common factor with $k$.
Note that $\zeta \in \rucommon(K)$ if and only if $\zeta$ is a root of unity with $\zeta^{k^i} \ne \zeta$ for all $i \ge 1$.

\section{Preliminaries} \label{sec:preliminaries}

We recall the special classes of formal power series that appear in the paper: rational series, Mahler series, regular series, and automatic series.
References for rational series are \cite[Chapter 6]{berstel-reutenauer11}\cite{everest-vanderpoorten-shparlinski-ward03}, for regular and automatic series \cite{allouche-shallit03}\cite[Chapter 5]{berstel-reutenauer11}, and for Mahler series \cite{dumas93,nishioka96,adamczewski-faverjon24}.

Usually sequences are indexed starting from $0$, that is $f\colon \Z_{\ge 0} \to K$, but for multiplicative sequences it is more natural to start from $1$, that is $f\colon \N \to K$.
Since all the classes of series that we consider are closed under changing finitely many coefficients, this does not cause any issues.
\subsection{Rational series and linear recurrence sequences}

Let $F= \sum_{n=0}^\infty f(n) x^n \in \power{K}$  be a formal power series with coefficient sequence $f\colon \Z_{\ge 0} \to K$.
Then the following statements are equivalent:
\begin{itemize}
  \item The series $F$ is \defit{rational}, that is, of the form $F=P/Q$ with $P$, $Q \in K[x]$ and $Q(0) \ne 0$. 
  \item The sequence $f$ is a \defit{linear recurrence sequence} (\defit{LRS}), that is, of the form $f(n) = \sum_{i=1}^d c_i f(n-i)$ for some $d \ge 0$, coefficients $c_1$, \dots,~$c_d \in K$, and all $n \ge d$.
  \item The sequence $f$ has a \defit{linear representation}: there exist $n \ge 0$, a matrix $A \in K^{n \times n}$, and two vectors $u \in K^{1 \times n}$ and $v \in K^{n \times 1}$ such that $f(n) = u A^n v$ for all $n \ge 0$.
\end{itemize}

The class of LRS is closed under $K$-linear combinations, Hadamard products (for which $(fg)(n) = f(n)g(n)$), and Cauchy products (for which $(f \ast g)(n) = \sum_{m=0}^n f(m)g(n-m)$).
The following lemma, that we will later need, can be obtained using the linear representation characterization of LRS.

\begin{lemma} \label{l:rational-arith-progr}
  Let $a \ge 1$.
  A sequence $f\colon \mathbb Z_{\ge 0} \to K$ is an LRS if and only if $n \mapsto f(an+b)$ is an LRS for all $0 \le b \le a-1$.
\end{lemma}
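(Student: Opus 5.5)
Both directions rest on the linear representation characterization of LRS recalled above: $f$ is an LRS if and only if $f(n) = u A^n v$ for some matrix $A$ and vectors $u$, $v$. The proof is a matter of converting between a representation of $f$ and representations of its subsequences along residue classes modulo $a$.

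For the forward direction, suppose $f(n) = u A^n v$ for all $n \ge 0$. Then for each $0 \le b \le a-1$,
\[
f(an+b) = u A^{an+b} v = (u A^b)\,(A^a)^n\, v .
\]
This is a linear representation with matrix $A^a$, row vector $uA^b$ and column vector $v$. Hence $n \mapsto f(an+b)$ is an LRS.

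For the converse, suppose each $f_b(n) := f(an+b)$ has a representation $f_b(n) = u_b A_b^n v_b$, with $A_b$ of size $d_b$. We assemble these into a single block representation of $f$. Let $D = \sum_b d_b$ and form the $aD \times aD$ block matrix $M$ with $a \times a$ blocks, each block of size $D$:
\begin{itemize}
  \item the diagonal block $\mathrm{diag}(A_0,\dots,A_{a-1})$ sits in block position $(a-1,0)$;
  \item identity blocks $I_D$ sit in positions $(j, j+1)$ for $0 \le j \le a-2$.
\end{itemize}
Thus $M$ acts as a cyclic shift through $a$ slots, and one full cycle applies $\mathrm{diag}(A_0,\dots,A_{a-1})$ once.

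Take the column vector $w$ that places $(v_0,\dots,v_{a-1})$ in slot $0$ and zeros elsewhere. Then $M^{an+b} w$ equals $\mathrm{diag}(A_b^n)$, suitably arranged, applied to $(v_0,\dots,v_{a-1})$, and sitting in slot $b$ (or slot $-b \bmod a$, depending on the shift direction, which can be fixed by choosing it appropriately). Finally choose the row vector $z$ so that its slot-$b$ component picks out $u_b$ on the $b$-th subblock and is zero elsewhere. Then
\[
z\, M^{an+b}\, w = u_b A_b^n v_b = f(an+b),
\]
so $f$ has a linear representation and is therefore an LRS.

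A generating-function alternative exists: $F(x) = \sum_b x^b F_b(x^a)$ with each $F_b$ rational, so $F$ is rational. This is quicker and avoids the index bookkeeping. I would probably present it this way and mention the matrix construction only as the approach suggested by the text.

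The only real obstacle is getting the shift indexing of $M$ right, and the generating-function formulation sidesteps it entirely.
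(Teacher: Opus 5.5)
Your proposal is correct. It follows the route the paper indicates; the paper gives no proof beyond noting that the lemma follows from the linear representation characterization. Your forward direction, $f(an+b) = (uA^b)(A^a)^n v$, is exactly that argument.

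The block-matrix converse, as literally written, has an off-by-one error. Put $B=\mathrm{diag}(A_0,\dots,A_{a-1})$ in block $(a-1,0)$ and support $w$ in slot $0$. Then the first application of $M$ already multiplies by $B$. So for $1\le b\le a-1$ the vector $M^{an+b}w$ carries $B^{n+1}V$ in slot $a-b$, not $B^nV$; for $a=2$ you get $M^{2n+1}w=(0,B^{n+1}V)^T$. You can fix this by placing $V$ in slot $a-1$ instead, so that $M^{an+b}w$ has $B^nV$ in slot $a-1-b$, or by reversing the direction of the cycle.

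Your generating-function argument, $F(x)=\sum_b x^bF_b(x^a)$ with each $F_b$ rational, is a complete proof of the converse and avoids this bookkeeping. For the forward direction the matrix representation is the natural tool. A generating-function proof there would need the extra step that extracting a residue class preserves rationality, for example by rewriting the denominator as $\prod_{\zeta^a=1}Q(\zeta x)$, which is a polynomial in $x^a$.
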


\subsection{Mahler series}
Mahler series, for the fixed base $k \ge 2$, are defined in terms of a functional equation involving the Mahler operator $\cM_k\colon \power{K} \to \power{K}$, $F(x) \mapsto F(x^k)$.

\begin{definition}
A formal power series $F = \sum_{n=0}^\infty f(n) x^n$ is \defit{$k$-Mahler} \textup(also called an \defit{$M$-function}\textup) if there exist $n \ge 1$ and polynomials $P_0$, \dots,~$P_n \in K[x]$, with $P_0 \ne 0$, such that
\begin{equation} \label{eq:mahler-eqn}
P_0(x) F(x) = \sum_{j=1}^n P_j(x) F(x^{k^j}).
\end{equation}
\end{definition}

An equation of the form \eqref{eq:mahler-eqn} is called a \defit{\textup(homogeneous\textup) $k$-Mahler equation} for $F$.
The integer $n$ is the \defit{order} of the equation.
Allowing for inhomogeneous equations does not enlarge the class of series, and we will usually only consider homogeneous equations, except in \cref{sec:mult-mahler-completely-multiplicative}.

It is easy to see that the polynomials appearing as coefficient $P_0$ in $k$-Mahler equations for a fixed series $F$ form an ideal $I_F$ of $K[x]$.
Since $K[x]$ is a principal ideal domain (PID), this ideal is generated by a single polynomial.

\begin{definition}
  The \defit{$k$-Mahler denominator} of a $k$-Mahler series $F$, denoted by $\md_K F \in K[x]$, is the unique generator of $I_F$ whose lowest nonzero coefficient is $1$.
\end{definition}

When the base $k$ is clear from context, we simply write \defit{Mahler denominator}.
By definition, the Mahler denominator $\md_k F$ divides $P_0$ in any $k$-Mahler equation for $F$.
Further, there exists a $k$-Mahler equation for $F$ with $P_0=\md_k F$.
However, the polynomial $P_0$ in a minimal Mahler equation, that is, one of minimal order, need not be the Mahler denominator \cite[Example 3.10]{adamczewski-bell-smertnig23}.

Mahler series are closed under taking $K$-linear combinations \cite[Théorème 3 in Chapitre 3]{dumas93}.
Every rational series is $k$-Mahler, as the next example shows.

\begin{example} \label{exm:rational-is-mahler}
  If $F=P/Q$ with $P$, $Q \in K[x]$ and $Q(0) \ne 0$, then $F$ is $k$-Mahler.
  Indeed, from $Q(x) F(x) = P(x)$ and $Q(x^k) F(x^k) = P(x^k)$ we get the $k$-Mahler equation
  \[
  P(x^k) Q(x) F(x) = P(x) Q(x^k) F(x^k).
  \]
\end{example}

Among $k$-Mahler series, the rational series can be considered to be the trivial case.

Two integers $k$,~$l \ge 2$ are \defit{multiplicatively independent} if there do not exist integers $m$,~$n \ge 1$ such that $k^m = l^n$.
The following generalization of Cobham's Theorem shows that the class of $k$- and $l$-Mahler series are disjoint except for the rational series.

\begin{theorem}[{\cite[Theorem 1.3]{adamczewski-bell17}}]\label{t:cobham-mahler}
  Let $k$,~$l \ge 2$ be multiplicatively independent integers.
  Then $F \in \power{K}$ is both $k$- and $l$-Mahler if and only if it is rational.
\end{theorem}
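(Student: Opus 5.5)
This is a cited result of Adamczewski and Bell, and I would recover it by combining a Cobham-type theorem for $k$-regular sequences with a structural reduction of Mahler series.

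\emph{The \enquote{if} direction.} This is immediate from \cref{exm:rational-is-mahler}: every rational series is both $k$-Mahler and $l$-Mahler.

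\emph{The converse.} Suppose $F$ is both $k$- and $l$-Mahler with $k$, $l$ multiplicatively independent. The plan has three steps.

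\begin{enumerate}
  \item \emph{Reduce to regular sequences.} Multiply $F$ by a suitable nonzero polynomial or Mahler-type product. The natural choice is built from the Mahler denominator $\md_k F$, namely $\prod_{i \ge 0} \md_k F(x^{k^i})^{-1}$ where this converges after removing factors that vanish at $0$. The goal is for the resulting series $G$ to have $k$-regular coefficients. The difficulty is that the multiplier must be chosen compatibly for both bases.
  \item \emph{Control the poles.} Analyse the singularities of $F$ on the unit circle, viewed as a meromorphic function on the unit disk after embedding a finitely generated subfield of $K$ into $\C$. A $k$-Mahler function's singularities accumulate along orbits of roots of unity under $\zeta \mapsto \zeta^k$. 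Being simultaneously $l$-Mahler forces these singular sets to be invariant under both $\zeta \mapsto \zeta^k$ and $\zeta \mapsto \zeta^l$. Multiplicative independence then should force $F$ to be meromorphic on $\C$ with only finitely many poles, at roots of unity.
  \item \emph{Apply Cobham and conclude.} Once $F$ has been multiplied by a polynomial to become $k$- and $l$-regular, invoke the Cobham-type theorem for regular sequences (Bell's generalization) to get that the product is an LRS. Hence $F$ is rational.
\end{enumerate}

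\emph{Main obstacle.} The hardest step is the analytic one: showing that a series which is simultaneously $k$- and $l$-Mahler cannot have natural-boundary-type behaviour, i.e.\ that its singular set is finite. I would first try a growth argument. Iterate both Mahler equations to bound $|F(x)|$ as $x \to \zeta$ radially for generic $\zeta$ on the unit circle. Then use density of $\{k^a l^b\}$ in the multiplicative sense, via Furstenberg-type $\times k,\times l$ rigidity, to propagate the bounds.
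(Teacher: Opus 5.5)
The paper does not prove this statement; it imports it from \cite[Theorem 1.3]{adamczewski-bell17} (see also \cite{schaefke-singer19}). Your \enquote{if} direction via \cref{exm:rational-is-mahler} is correct. Step 3 is also fine in principle: if $QF$ were $k$- and $l$-regular for some polynomial $Q \ne 0$, Bell's Cobham theorem for regular sequences \cite{bell05-cobham} would make $QF$, hence $F$, rational. But the converse has a genuine gap, because nothing in your outline produces such a $Q$.

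In Step 1 the multiplier is inverted. Writing $\md_k F = x^a P_0$ with $P_0(0)=1$, it is $D_F=\prod_{j\ge 0}P_0(x^{k^j})$ itself, not its inverse, that gives a Mahler equation for $D_F F$ with lowest coefficient $x^a$. This yields $k$-regularity of $D_F F$ by \cref{t:abs-regular}, which is exactly the trick in the proof of \cref{t:mahler-denominators}. More seriously, $D_F$ is an infinite product adapted to base $k$, so $D_F F$ has no reason to remain $l$-Mahler, and the analogous base-$l$ product destroys the $k$-Mahler property. You flag this compatibility problem but do not resolve it. The only multipliers that evidently respect both bases are polynomials (or rational functions), and these do not in general make a Mahler series regular. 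For example, $F=\prod_{j\ge 0}(1-2x^{k^j})^{-1}$ satisfies $(1-2x)F(x)=F(x^k)$, but it has infinitely many poles in the open unit disc, so no polynomial multiple of it has polynomially bounded coefficients. Ruling out such behaviour for a series that is also $l$-Mahler is precisely the content of the theorem.

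Step 2 does not reduce the problem either; it restates it. By Randé's Pólya--Carlson type theorem, a complex $k$-Mahler function is either rational or has the unit circle as a natural boundary. So continuing $F$ meromorphically across even one point of $|x|=1$ is already equivalent to rationality, and would make Steps 1 and 3 unnecessary. Your description of the singularities as accumulating along orbits of roots of unity is not accurate: for a non-rational Mahler function they fill the whole circle. The tools you propose cannot exclude this natural-boundary case:
\begin{itemize}
\item The full circle is closed and invariant under both $\zeta\mapsto\zeta^k$ and $\zeta\mapsto\zeta^l$. It is one of the two alternatives allowed by Furstenberg's $\times k$, $\times l$ theorem, so invariance and density of $\{k^a l^b\}$ give no contradiction.
\item Radial growth bounds do not help. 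The $k$-Mahler (indeed $k$-regular) series $\sum_{n\ge 0}k^{-n}x^{k^n}$ is bounded on the closed unit disc, yet by Hadamard's gap theorem it has the unit circle as natural boundary. So no bound on $|F|$, however it is propagated, yields continuation.
\end{itemize}
What is missing is a mechanism that genuinely couples the two functional equations. That is the substance of Adamczewski and Bell's proof, and, by different means, of Schäfke and Singer's analysis of consistent systems of $k$- and $l$-Mahler equations; the paper uses the result only as a black box.
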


\subsection{Regular sequences}

To define regular sequences, we first introduce the $k$-kernel.

\begin{definition}
  Let $f \colon \Z_{\ge 0} \to S$ be a sequence over a set $S$.
  The \defit{$k$-kernel} of $f$ is the set of subsequences
  \[
  \big\{\, (f(k^e n + r))_{n \ge 0} : e \ge 0,\, 0 \le r < k^e \,\}.
  \]
\end{definition}

If $f$ takes its values in a commutative ring $R$, then one can consider the $R$-module generated by the $k$-kernel of $f$.
This leads to the notion of $k$-regular sequences, introduced by Allouche and Shallit \cite{allouche-shallit92,allouche-shallit03}.

\begin{definition}
  Let $R$ be a commutative ring and $f\colon \Z_{\ge 0} \to R$ a sequence.
  Then $f$ is \defit{$k$-regular} \textup(over $R$\textup) if the $R$-module generated by its $k$-kernel is finitely generated.
\end{definition}

We are interested in particular in the case that $R$ is a field $K$, but will need the more general case in the proofs.
A power series $F = \sum_{n=0}^\infty f(n) x^n \in \power{R}$ is \defit{$k$-regular} if its coefficient sequence $f$ is $k$-regular.

The analogue of \cref{l:rational-arith-progr} holds for regular sequences.

\begin{lemma} \label{l:regular-arith-progr}
  Let $a \ge 1$.
  A sequence $f\colon \mathbb Z_{\ge 0} \to R$ is $k$-regular if and only if $n \mapsto f(an+b)$ is $k$-regular for all $0 \le b \le a-1$.
\end{lemma}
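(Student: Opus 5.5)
The plan is to work directly with the definition of $k$-regularity via the $k$-kernel, and to compare the $k$-kernel of $f$ with the $k$-kernels of the subsequences $f_b(n) = f(an+b)$, $0 \le b \le a-1$. Write $M_f$ for the $R$-module generated by the $k$-kernel of $f$. Throughout I use the standard fact that $f$ is $k$-regular if and only if there is a finitely generated $R$-submodule $N$ of $R^{\Z_{\ge 0}}$ that contains $f$ and is stable under all the operators $\Lambda_r\colon g \mapsto (g(kn+r))_{n\ge 0}$ for $0 \le r < k$. Indeed, such an $N$ contains $M_f$. Conversely, $M_f$ is itself such a module once it is finitely generated. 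The one subtlety is that for a general commutative ring $R$ a submodule of a finitely generated module need not be finitely generated. To avoid this, I will always exhibit $M_f$ directly as generated by finitely many sequences, or else check stability of an explicitly given finitely generated module containing the kernel generators and use that to bound $M_f$.

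For the direction ($\Rightarrow$): fix $b$ and consider $f_b(n) = f(an+b)$. A $k$-kernel element of $f_b$ has the form $n \mapsto f(a(k^e n + r) + b) = f(ak^e n + ar + b)$. Write $ar + b = q k^e + s$ with $0 \le s < k^e$, and note that $0 \le q < a$. Then this element equals $n \mapsto g(an+q)$, where $g(n) = f(k^e n + s)$ lies in the $k$-kernel of $f$. So the $k$-kernel of $f_b$ is contained in
\[
  T = \{\, (h(an+q))_{n \ge 0} : h \in M_f,\ 0 \le q < a \,\}.
\]
If $h_1, \dots, h_t$ generate $M_f$, then $T$ lies in the $R$-module generated by the $at$ sequences $(h_j(an+q))_n$. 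Hence the module generated by $T$ is finitely generated, and it is $\Lambda_r$-stable. For this, observe that $\Lambda_r$ sends $(h(an+q))_n$ to $(h(akn + ar + q))_n$. Writing $ar + q = q'k + s'$ with $0 \le s' < k$ and $0 \le q' < a$, this sequence is $(\Lambda_{s'}h)(an + q')$, and $\Lambda_{s'}h \in M_f$. So $f_b$ lies in a finitely generated $\Lambda$-stable module, and therefore $M_{f_b}$ is contained in one. To get finite generation of $M_{f_b}$ itself, I argue as above: the elements of the $k$-kernel of $f_b$ are exactly the $\Lambda$-words applied to $f_b$, and these are obtained from the finitely many generators by the explicit rules just described. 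In the case $R = K$ a field, which is the main case of interest, finite generation is immediate.

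For the direction ($\Leftarrow$): suppose each $f_b$ is $k$-regular, with finite generating sets for $M_{f_b}$. I will show that the $R$-module $N$ generated by the sequences
\[
  n \mapsto \begin{cases} h(m) & \text{if } n = am + b, \\ 0 & \text{otherwise}, \end{cases}
\]
for $0 \le b < a$ and $h$ running through the generators of $M_{f_b}$ (call this the interleaving $\iota_b h$), contains $f$ and is $\Lambda$-stable. Clearly $f = \sum_b \iota_b f_b$. For stability, compute $\Lambda_r(\iota_b h)(n)$. It vanishes unless $kn + r \equiv b \pmod a$. Let $d = \gcd(k,a)$. The congruence $kn \equiv b - r \pmod a$ then determines $n$ modulo $a/d$, so the surviving indices form finitely many residue classes $n \equiv c \pmod a$. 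On each such class, $n = am + c$ gives $kn + r = a(km + j) + b$ for an integer $j$ with $0 \le j < k$, because $kc + r < ka + k$. Hence the value is $(\Lambda_j h)(m)$, and $\Lambda_r(\iota_b h)$ is a finite sum of terms $\iota_c(\Lambda_j h)$. Since $\Lambda_j h \in M_{f_b}$, we get $\Lambda_r N \subseteq N$, and so $f$ is $k$-regular. The main obstacle, though a mild one, is precisely this index bookkeeping (in particular checking that $j$ stays in $[0,k)$) together with the finite-generation subtlety over a general ring.
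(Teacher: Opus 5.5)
\textbf{Step that fails in ($\Leftarrow$).} Your ($\Rightarrow$) computation is correct; note that there you rightly let the offset $q$ range over all of $[0,a)$. In ($\Leftarrow$) you overlook the same mixing of residue classes, and the claim $\Lambda_r N\subseteq N$ is false for $N=\sum_b \iota_b(M_{f_b})$.

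Your own computation gives $\Lambda_r(\iota_b h)=\sum_c \iota_c(\Lambda_j h)$, summed over the classes $c$ with $kc+r\equiv b \pmod a$. For $c\neq b$ this places an element of $M_{f_b}$ on the class $c$, whereas $N$ only contains $\iota_c(M_{f_c})$ on that class.

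Here is a concrete failure. Take $a=2$, $k=3$ and $f$ the indicator function of the even integers. Then $f_0\equiv 1$ and $f_1\equiv 0$, so $N=Rf$. But $\Lambda_1 f$ is the indicator function of the odd integers. So $N$ does not even contain the $k$-kernel of $f$.

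The repair is to use $N'=\sum_{b,c}\iota_c(M_{f_b})$, generated by the $\iota_c h$ for all $0\le c<a$ and all generators $h$ of all the $M_{f_b}$. With this module your computation does give $\Lambda_r N'\subseteq N'$. To get $0\le j<k$, use $kc+r\le k(a-1)+k-1=ka-1$; the bound $kc+r<ka+k$ you state does not exclude $j=k$.

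\textbf{Finite generation over a general ring.} This is the gap you flag but do not close. In both directions you only place the kernel module inside a finitely generated $\Lambda$-stable module. Over a non-Noetherian $R$ that does not make it finitely generated, and saying the kernel elements are produced by the explicit rules is not an argument. What is missing is a descent to a Noetherian subring:
\begin{itemize}
\item Suppose $N=\sum_{i=1}^t Rh_i$ is $\Lambda$-stable and contains $f$. Write $f=\sum_i u_ih_i$ and $\Lambda_r h_i=\sum_j c^{(r)}_{ij}h_j$, and let $R_0$ be the subring generated by the finitely many $u_i$ and $c^{(r)}_{ij}$.
\item By Hilbert's basis theorem $R_0$ is Noetherian.
\item $N_0=\sum_i R_0h_i$ is a finitely generated $\Lambda$-stable $R_0$-module containing $f$, hence containing the whole $k$-kernel of $f$.
\item So the $R_0$-span of the kernel is generated by finitely many kernel elements, and these same elements generate $M_f$ over $R$.
\end{itemize}
With this, your standard fact holds over every commutative ring, and both directions (with $N'$ in place of $N$) go through.

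The paper's own proof is only a citation to Allouche and Shallit, who work over a Noetherian coefficient ring, and the lemma is applied only over fields. So your direct kernel argument, once corrected, is a perfectly good substitute.
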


\begin{proof}
  \cite[Theorems 2.6 and 2.7]{allouche-shallit92}.
\end{proof}

Equivalently, a sequence $f$ is $k$-regular if there exist an integer $d \ge 1$, vectors $u \in R^{1 \times d}$, $v \in R^{d \times 1}$, and matrices $A(0)$, \dots,~$A(k-1) \in R^{d \times d}$ such that
\begin{equation} \label{eq:lin-rep}
f(n_l k^l + n_{l-1} k^{l-1} + \cdots + n_0) = u  A(n_l) A(n_{l-1}) \cdots A(0) v
\end{equation}
for all $l \ge 0$ and $0 \le n_l$, \dots,~$n_0 \le k-1$.

From the representation in \eqref{eq:lin-rep} it is immediate that for every $k$-regular sequence $f \colon \Z_{\ge 0} \to K$, there exists a finitely generated subring $R \subseteq K$ over which $f$ is $k$-regular.
Namely, one can take the subring generated by the entries of the matrices $A(i)$ and the vectors $u$ and $v$.
We shall later also need the following related observation.

\begin{lemma} \label{l:regular-descent-field}
  If $F \in \power{K}$ is $k$-regular over $\overline{K}$, then $F$ is $k$-regular over $K$.
\end{lemma}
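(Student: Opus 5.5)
The plan is to use the linear-representation characterization \eqref{eq:lin-rep} together with a Galois-descent argument. Since $F$ is $k$-regular over $\algc{K}$, there are $d \ge 1$, vectors $u \in \algc{K}^{1 \times d}$, $v \in \algc{K}^{d \times 1}$ and matrices $A(0), \dots, A(k-1) \in \algc{K}^{d \times d}$ representing $f$ as in \eqref{eq:lin-rep}. These finitely many entries generate a finite extension $L$ of $K$. Hence $f$ is $k$-regular over $L$, so the $L$-vector space $V_L$ spanned by the $k$-kernel of $f$ is finite-dimensional.

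We then descend from $L$ to $K$ by a linear-algebra argument, which avoids any need to assume $L/K$ Galois. Let $V_K$ be the $K$-span of the $k$-kernel inside the $K$-vector space $K^{\Z_{\ge 0}}$. Every kernel sequence takes values in $K$, since $f$ does.

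The key fact is that $K$-linearly independent sequences in $K^{\Z_{\ge 0}}$ remain $L$-linearly independent in $L^{\Z_{\ge 0}}$. To see this, take finitely many $K$-independent sequences $g_1, \dots, g_m$. Independence is witnessed by finitely many indices $n_1, \dots, n_m$ for which the $m \times m$ matrix $(g_i(n_j))$ has nonzero determinant. This can be seen by row reduction, or by noting that the evaluation map $K^m \to K^{\Z_{\ge 0}}$ is injective, so some finite set of coordinates already gives an injective map. A nonzero determinant stays nonzero over $L$, so the $g_i$ are $L$-independent.

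Consequently $\dim_K V_K \le \dim_L V_L < \infty$. A finite-dimensional vector space is a finitely generated $K$-module, so $f$ is $k$-regular over $K$.

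The only step needing some care is the claim that linear independence over $K$ is detected by a nonsingular finite minor. This is standard: choose indices greedily so that the restricted vectors $(g_1(n), \dots, g_m(n))$ span $K^m$, which is possible because otherwise a nonzero linear functional would vanish on all of them, giving a nontrivial $K$-linear relation among the $g_i$.

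An alternative route uses flatness: $L \otimes_K V_K \to L^{\Z_{\ge 0}}$ is injective and its image is $V_L$. The determinant argument is more elementary, so I would present that one.
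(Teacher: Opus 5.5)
Your proof is correct, but your descent step differs from the paper's.

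The paper also passes to a finitely generated subfield $L \subseteq \overline{K}$ containing $K$ over which $F$ is $k$-regular. It then uses only that $L/K$ is \emph{finite}: the $L$-span $V_L$ of the $k$-kernel has $K$-dimension $[L:K]\dim_L V_L < \infty$, and the $K$-span $V_K$ is a $K$-subspace of $V_L$.

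You instead prove that $K$-linearly independent $K$-valued sequences remain independent over any extension field, via a nonsingular finite minor. This gives the sharper bound $\dim_K V_K \le \dim_L V_L$. In fact equality holds, since a $K$-basis of $V_K$ already spans $V_L$ over $L$.

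The paper's route is a one-liner, but it genuinely relies on $\overline{K}/K$ being algebraic. Yours costs a small lemma but works for an arbitrary field extension.

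As a consequence, your first step is superfluous. The independence argument applies verbatim with $L = \overline{K}$, so you never need the linear representation or the finiteness of $[L:K]$.
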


\begin{proof}
  By the argument above, we see that $F$ is $k$-regular over some finitely generated subfield $L \subseteq \overline{K}$ containing $K$.
  Because $L$ is algebraic, the field extension $L/K$ is finite.
  Now the $k$-kernel is finite-dimensional over $L$, and hence also over $K$, so $F$ is $k$-regular over $K$.
\end{proof}

If $F \in \power{K}$ is $k$-regular, then it is $k$-Mahler \cite{becker94}.
For a statement in the converse direction, we first make the following definitions.

\begin{definition}
  A polynomial $P \in K[x]$ is 
  \begin{itemize}
    \item \defit{negligible} \textup(more precisely, it is \defit{$k$-negligible}\textup) if all roots of $P$ in $\algc{K}$ are contained in $\rucommon(\algc{K}) \cup \{0\}$;
    \item \defit{non-negligible} if it is not negligible.
  \end{itemize}
\end{definition}

In other words, for a negligible polynomial, all nonzero roots $\alpha$ are roots of unity, and $\alpha^{k^i} \ne \alpha$ for all $i \ge 1$.
This allows for the following characterization.

\begin{theorem}[\cite[Theorem 10.4]{adamczewski-bell-smertnig23}] \label{t:abs-regular}
  A $k$-Mahler series $F \in \power{K}$ is $k$-regular if and only if the Mahler denominator $\md_k F$ is negligible.
\end{theorem}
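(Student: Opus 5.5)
The plan is to prove the two implications separately, using the known characterization of regularity through Mahler equations. By \cite{becker94}, a $k$-regular series satisfies a $k$-Mahler equation in which $P_0 = 1$. Conversely, an equation of the shape $F(x)=\sum_{j\ge 1}P_j(x)F(x^{k^j})$ forces $F$ to be $k$-regular: the span of $\{x^i F(x^{k^j})\}$ for bounded $i$ and $j$ is stable under the Cartier operators $\Lambda_r$, and applying these operators realizes the $k$-kernel.

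\emph{($\Rightarrow$)} If $F$ is $k$-regular, Becker's theorem provides an equation with $P_0=1$. Since $\md_k F$ divides $P_0$ in every equation for $F$, we get $\md_k F=1$, which is negligible.

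\emph{($\Leftarrow$)} Suppose $\md_k F = x^a N(x)$ with $N(0)\neq 0$ and all roots of $N$ in $\rucommon(\algc K)$. By \cref{l:regular-descent-field} we may work over $\algc K$. The goal is to find a polynomial $Q$, all of whose roots are roots of unity, and a series $G$ satisfying a Mahler equation with leading coefficient a power of $x$, such that $F = G/Q$.

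The construction of $Q$ uses the defining property of $\rucommon$. If $\zeta$ has order $n$ with $\gcd(n,k)>1$, then the orbit $\zeta,\zeta^k,\zeta^{k^2},\dots$ strictly decreases the $k$-part of the order. After finitely many steps it reaches a root of unity of order coprime to $k$, and from then on it lies on a cycle outside $\rucommon$. The factors of $N$ therefore sit on finitely many non-returning tails of Frobenius-type orbits.

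One then sets $G = Q F$ with $Q$ a suitable product of Mahler twists $N(x^{k^i})$, or of cyclotomic factors along these tails. Substituting into the minimal equation for $F$ should telescope the tail factors, so that the leading coefficient of the equation for $G$ becomes $x^a$ times a unit.

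The power $x^a$ is then removed by the standard argument of passing to a shifted series, or equivalently by using that the $\Lambda_r$-stable module is still finitely generated once one allows finitely many negative powers of $x$. This shows that $G$ is $k$-regular.

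Finally, $F = G\cdot(1/Q)$. The coefficients of $1/Q$ form a quasi-polynomial sequence, since every root of $Q$ is a root of unity. Multiplying a $k$-regular series by $1/(1-\omega x)$ with $\omega$ a root of unity amounts to taking twisted partial sums, and these remain $k$-regular. Iterating this over the factors of $Q$ gives the regularity of $F$.

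The main obstacle is the telescoping step: choosing $Q$ so that, after substitution, no new non-negligible factors appear in the leading coefficient. The danger is that twisted factors $N(x^{k^i})$ might acquire roots on the coprime-to-$k$ cycles. I would try first to use the minimality of $\md_k F$, together with the fact that $\md_k$ divides every $P_0$, to show that such roots can be cancelled against the coefficients $P_j$. Failing that, I would pass to the equivalent first-order system $Y(x)=A(x)Y(x^k)$ and perform a rational gauge transformation whose denominators are supported on these orbit tails.
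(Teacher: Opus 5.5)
The paper gives no proof of this statement; it imports it from \cite{adamczewski-bell-smertnig23} and calls it non-trivial. On its own terms, your proposal has a genuine gap in the forward direction, and its backward direction is set up the wrong way round.

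\textbf{Forward direction.} \cite{becker94} only gives that a $k$-regular series is $k$-Mahler, which is how the paper uses it. An equation with $P_0=1$ need not exist, and your argument would prove $\md_k F=1$ for every $k$-regular $F$, which is false. Take $k=2$ and $F=1/(1+x)$, so $f(n)=(-1)^n$ is $2$-automatic. Multiply any equation $P_0F=\sum_{i\ge1}P_iF(x^{2^i})$ by $1+x$ and evaluate at $x=-1$. Each $F(x^{2^i})=1/(1+x^{2^i})$ has no pole at $-1$, so $P_0(-1)=0$ and $\md_2F=1+x$. This denominator is negligible but nontrivial, which is exactly what the theorem has to allow. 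Getting $P_0=1$ after dividing $F$ by a polynomial $x^\gamma Q$ is Becker's conjecture \cite{bell-chyzak-coons-dumas19}. Even that only yields $\md_kF\mid x^c\prod_{i=1}^dQ(x^{k^i})$, which has roots on $k$-cycles whenever $Q$ does (e.g.\ $Q(1)=0$). The real content of this direction is that a $k$-regular $F$ has no root of $\md_kF$ that is either a non-root of unity or a root of unity $\alpha$ with $\alpha^{k^j}=\alpha$. Your plan does not address this at all.

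\textbf{Backward direction.} The idea of conjugating $N$ away by a polynomial with root-of-unity roots is right, but you multiply where you must divide, so your telescoping step cannot be completed. In general no nonzero polynomial $Q$ makes $QF$ satisfy an equation with leading coefficient $x^a$. Take $k=2$, $K=\C$ and $F=\prod_{j\ge0}(1+ix^{2^j})^{-1}$; then $(1+ix)F(x)=F(x^2)$, so $\md_2F\mid 1+ix$ is negligible. Suppose $x^aQF=\sum_{j\ge1}c_jQ(x^{2^j})F(x^{2^j})$. Using $F(x^{2^j})=\Pi_jF$ with $\Pi_j=\prod_{l<j}(1+ix^{2^l})$, this becomes $x^aQ=\sum_jc_jQ(x^{2^j})\Pi_j$. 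Induction on $m$ then gives $Q(\beta)=0$ whenever $\beta^{2^m}=i$: for $j\le m$ use $Q(\beta^{2^j})=0$, and for $j>m$ use the factor $1+ix^{2^m}$ of $\Pi_j$. Hence $Q=0$. The danger you name does not actually occur, since $k^i$-th roots of elements of $\rucommon(\algc K)$ stay in $\rucommon(\algc K)$; the real problem is that the twists never terminate.

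The fix is to take $F=TG$ rather than $F=G/Q$. Let $T=\prod_\zeta(x^{n_\zeta/p_\zeta}-1)^{e_\zeta}\in\Q[x]$, where $\zeta$ runs over the roots of $N$ with multiplicity $e_\zeta$ and order $n_\zeta$, and $p_\zeta$ is a prime dividing $\gcd(n_\zeta,k)$. Each $\zeta$ is a root of $(x^{kM}-1)/(x^M-1)$ for $M=n_\zeta/p_\zeta$, so $N(x)T(x)\mid T(x^k)$. Hence $T(x^{k^i})/(N(x)T(x))\in K[x]$ for all $i\ge1$, and $G\coloneqq F/T\in K\llbracket x\rrbracket$ satisfies $x^aG=\sum_{i\ge1}\widetilde P_iG(x^{k^i})$. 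Your $\Lambda_r$-argument, allowing finitely many negative powers of $x$, shows $G$ is $k$-regular, so $F=TG$ is $k$-regular with no partial-sum step. In the example, $T=x^2-1$ and $F/T=-\prod_{j\ge0}(1-ix^{2^j})$.
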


\subsection{Automatic sequences}
The final class of sequences we need are the automatic sequences.
They can be defined in terms of a finite state automaton, but also, equivalently, in terms of the $k$-kernel.

\begin{definition}
  A sequence $f\colon \Z_{\ge 0} \to S$ over a set $S$ is \defit{$k$-automatic} if its $k$-kernel is finite.  
\end{definition}

Again, a series $F=\sum_{n=0}^\infty f(n) x^n \in \power{K}$ is \defit{$k$-automatic} if its coefficient sequence $f$ is $k$-automatic.
It is clear that a $k$-automatic sequence is $k$-regular and hence $k$-Mahler.

Automatic sequences are always finitely valued, and this characterizes them among $k$-Mahler series.

\begin{theorem}[\cite[Theorem 11.1]{adamczewski-bell-smertnig23}]
  A $k$-Mahler series $F \in \power{K}$ is $k$-automatic if and only if it has only finitely many distinct coefficients.
\end{theorem}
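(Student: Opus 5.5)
This statement is the last result quoted in the preliminaries (\cite[Theorem 11.1]{adamczewski-bell-smertnig23}). Here is how I would prove it. One direction is immediate: a $k$-automatic sequence has a finite $k$-kernel, and every kernel element is a subsequence of $f$, so $f$ takes only finitely many values. For the converse, suppose $F$ is $k$-Mahler and its coefficients lie in a finite set $S \subseteq K$. The plan is to show first that $F$ is $k$-regular. Once that is known, Allouche and Shallit's result that a finitely valued regular sequence over a field is automatic finishes the argument. That result is proved by choosing finitely many kernel elements spanning the kernel module and reconstructing every kernel element from a bounded initial segment of values in the finite set $S$.

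To show regularity, I would use \cref{t:abs-regular}. It suffices to show that the Mahler denominator $\md_k F$ is negligible, that is, every nonzero root $\alpha \in \algc{K}$ of $\md_k F$ lies in $\rucommon(\algc{K})$. I would split this into two cases.

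\emph{Roots off the unit circle, or not roots of unity.} Take a Mahler equation $P_0 F = \sum_j P_j F(x^{k^j})$ with $P_0 = \md_k F$. The coefficients lie in a finitely generated field, which embeds in $\C$ or in a $p$-adic field. Bounded coefficients make $F$ converge on the open unit disc under every absolute value, with at most polynomial growth near the boundary. Using minimality of the denominator, one shows that if $\alpha$ is a root of $\md_k F$, then $F$ (or a vector of Mahler iterates) must have a genuine singularity or pole at $\alpha$ and along its orbit $\alpha^{1/k^j}$. This is impossible when some embedding puts $|\alpha|<1$. Roots with $|\alpha| > 1$ under every embedding can be removed by iterating the equation. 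This leaves roots that are roots of unity, by Kronecker's theorem once all conjugates have absolute value $1$.

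\emph{Roots of unity with $\alpha^{k^i} = \alpha$.} Here the orbit of $\alpha$ under $x \mapsto x^k$ is periodic, and iterating the equation along the cycle gives a pole of $F$ at $\alpha$ of growing order. Approaching $\alpha$ radially, this forces unbounded growth incompatible with coefficients in a finite set, which give $|F(r\alpha)| \le C/(1-r)$. The growth exponents have to be made quantitative, and that is where the main obstacle lies. I would compare pole orders through the structure of the minimal equation, that is, use that $\md_k F$ divides $P_0$ in every equation, so that an unremovable factor $(x-\alpha)$ produces a pole of order increasing along iterations.

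Having shown $\md_k F$ negligible, $F$ is $k$-regular over $K$. With finitely many values it is then $k$-automatic, as above. I expect the hard step to be the analytic argument excluding non-negligible roots of unity. If it becomes too delicate, a fallback is to reduce modulo a suitable prime of a finitely generated ring containing the coefficients, where finitely valued becomes automatic via Christol-type arguments, and then lift back.
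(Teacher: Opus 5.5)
The paper does not prove this statement; it is quoted from \cite[Theorem 11.1]{adamczewski-bell-smertnig23}. Your outer frame is fine. The easy direction is correct. Once $\md_k F$ is known to be negligible, \cref{t:abs-regular} gives $k$-regularity, and a finite-dimensional kernel span together with a finite value set then yields a finite $k$-kernel. But all of the substance lies in showing that a finitely valued $k$-Mahler series has negligible Mahler denominator, and there you offer only heuristics.

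The assertion that ``minimality of the denominator'' forces a pole or singularity of $F$ at $\alpha$ or on its orbit does not follow from minimality. By definition, $\md_k F$ generates $\{P \in K[x] : PF \in \sum_{i\ge 1} K[x]F(x^{k^i})\}$. So $\alpha$ is a root exactly when $F$ admits no representation $F=\sum_{i\ge 1} c_i F(x^{k^i})$ with $c_i \in K(x)$ regular at $\alpha$. If $F$ and all $F(x^{k^i})$ are analytic at $\alpha$, this merely forces the polar parts of every such representation to cancel, which gives linear relations among Taylor coefficients of the $F(x^{k^i})$ at $\alpha$. That is no contradiction. (For differential operators, likewise, a solution can be analytic at a genuine, non-removable singular point of its minimal operator.) The link between roots of $\md_k F$ and the analytic behaviour of the particular solution $F$ is exactly what must be proved, and you do not prove it.

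The root-of-unity case is where the theorem has teeth, and there no pole argument is available. Take $F=\prod_{j\ge 0}(1-x^{k^j})^{-1}$. It satisfies $(1-x)F(x)=F(x^k)$ and is not $k$-regular, so $\md_k F = 1-x$. Yet $F$ has integer coefficients and is analytic on the open unit disc for every absolute value. Everything therefore hinges on a quantitative superpolynomial growth estimate for $F$ itself as $x \to \alpha$. Your ``pole of growing order along the cycle'' is a heuristic for this order-one model. In general you must exclude cancellation among the terms $P_j(x)F(x^{k^j})$, and show that the growth is carried by $F$ rather than by some other solution of the minimal operator. You flag this as the main obstacle but give no argument. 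This is where the cited source does its real work, via a height-gap theorem for coefficients of non-regular Mahler series, a substantial result.

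Your fallback does not help either. After reduction modulo a prime every sequence is finitely valued, so the hypothesis is lost. Moreover, a $k$-Mahler series over a finite field of characteristic prime to $k$ need not be $k$-automatic. For example, modulo $3$ the $2$-kernel of $B=\prod_{j\ge 0}(1-x^{2^j})^{-1}$ consists of series $R\cdot B$ whose factors $R$ have coefficient sequences polynomial in $n$ of unbounded degree (even modulo $3$), so the kernel is infinite. Finally, the lifting result used in this paper, \cite[Theorem 3.11]{bell05-cobham}, presupposes that the series is already regular.
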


Cobham's Theorem, in its original form, is the following statement about automatic sequences.

\begin{corollary}[Cobham's Theorem \cite{cobham69}] \label{c:cobham}
  Let $k$,~$l \ge 2$ be multiplicatively independent integers.
  Then a sequence $f\colon \Z_{\ge 0} \to S$ over a set $S$ is both $k$- and $l$-automatic if and only if it is eventually periodic.
\end{corollary}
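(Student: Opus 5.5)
We deduce the statement from \cref{t:cobham-mahler}, after reducing a finitely-valued sequence over an arbitrary set $S$ to finitely many $\{0,1\}$-valued sequences over $\Q$.

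\emph{Eventually periodic implies automatic in every base.} Suppose $f(n+P)=f(n)$ for all $n \ge N$, and that $f$ takes only finitely many values. Consider a kernel element $g(n)=f(k^e n+r)$ with $0 \le r < k^e$. There are only finitely many kernel elements with $k^e < N$, so assume $k^e \ge N$. Then for $n \ge 1$ we have $k^e n + r \ge N$. Hence $g(n)$ depends only on the residue of $k^e n + r$ modulo $P$, which is determined by the triple $(k^e \bmod P,\ r \bmod P,\ n \bmod P)$. So $g$ is determined by the finite data $k^e \bmod P$, $r \bmod P$ and $g(0)=f(r)$. The last of these ranges over the finitely many values of $f$. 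Thus the kernel is finite, and the same argument applies to $l$.

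\emph{Automatic implies finitely valued.} Every value $f(m)$ equals $g(0)$ for the kernel element $g(n)=f(k^e n+m)$, where $e$ is chosen with $k^e>m$. A $k$-automatic sequence has only finitely many kernel elements, so $f$ takes only finitely many values, say $s_1,\dots,s_t$.

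\emph{Both automatic implies eventually periodic.} For each $j$, let $\chi_j\colon \Z_{\ge 0}\to\Q$ be the indicator of $\{n : f(n)=s_j\}$. Each kernel element of $\chi_j$ is obtained by composing the corresponding kernel element of $f$ with the map $S\to\Q$, $s \mapsto [s=s_j]$. Hence $\chi_j$ is both $k$- and $l$-automatic, and therefore its generating series is both $k$- and $l$-Mahler over $\Q$. By \cref{t:cobham-mahler}, this series is rational, so $\chi_j$ is an LRS of some order $d$ taking values in $\{0,1\}$. In such an LRS, the $d$-tuple of consecutive values $(\chi_j(n),\dots,\chi_j(n+d-1))$ determines all later terms. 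There are only $2^d$ possible tuples, so by pigeonhole some tuple repeats, and $\chi_j$ is eventually periodic. Taking the least common multiple of the periods of the $\chi_j$, and the maximum of their preperiods, shows that $f$ is eventually periodic, because $f(n)=s_j$ exactly when $\chi_j(n)=1$.

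The only substantive input is \cref{t:cobham-mahler}; the remaining steps are bookkeeping. The step needing the most care is the transfer from $S$-valued to $\Q$-valued sequences, since \cref{t:cobham-mahler} only applies over a field of characteristic $0$.
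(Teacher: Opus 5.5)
Your proof is correct and follows essentially the route the paper sketches right after the statement: pass to a characteristic-$0$ field, apply \cref{t:cobham-mahler}, and use that a finitely-valued LRS is eventually periodic. The differences are cosmetic. You split $f$ into $\{0,1\}$-valued indicator sequences over $\Q$ instead of injecting its finitely many values into a single field, and you also write out the easy converse and the finiteness of the value set, which the paper leaves implicit.
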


This also follows from \cref{t:cobham-mahler} by embedding $S$ into a field and observing that a finitely-valued LRS is eventually periodic.
All the classes we have introduced have the following property in regard to changing the base $k$.

\begin{lemma}
  Let $F = \sum_{n=0}^\infty f(n) x^n \in \power{K}$ and let $r \ge 1$.
  \begin{enumerate}
  \item The series $F$ is $k$-Mahler if and only if it is $k^r$-Mahler.
  \item The series $F$ is $k$-regular if and only if it is $k^r$-regular.
  \item The series $F$ is $k$-automatic if and only if it is $k^r$-automatic.
  \end{enumerate}
\end{lemma}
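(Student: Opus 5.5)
Each of the three equivalences is proved on its own, and in each case one direction is trivial. For (2) and (3) I work directly with the $k$-kernel. For (1) I manipulate Mahler equations, using that $\cM_{k^r} = \cM_k^r$.

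\emph{Part (1).} Suppose $F$ is $k^r$-Mahler, so $P_0 F = \sum_{j=1}^n P_j \cM_{k^r}^j F$ with $P_0 \ne 0$. Since $\cM_{k^r}^j = \cM_k^{rj}$, this is already a $k$-Mahler equation of order $rn$, with zero coefficients at the exponents not divisible by $r$.

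For the converse, let $F$ be $k$-Mahler. Then the $K(x)$-vector space $V$ spanned by $F, \cM_k F, \cM_k^2 F, \dots$ is finite-dimensional: the equation expresses $\cM_k^{m}F$ for $m \ge n$ as a $K(x)$-combination of lower iterates after applying $\cM_k^{m-n}$, using that $P_0(x^{k^{m-n}}) \ne 0$. This argument takes place in the field of Laurent series $K(\!(x)\!)$.

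The functions $F, \cM_{k^r}F, \cM_{k^r}^2 F, \dots$ all lie in $V$, so they are $K(x)$-linearly dependent. Choose a nontrivial relation $\sum_{j=0}^{N} Q_j \cM_{k^r}^j F = 0$, and let $j_0$ be the smallest index with $Q_{j_0} \ne 0$. If $j_0 > 0$, I must remove the leading iterates. Write the relation as $\cM_{k^r}^{j_0}$ applied to something. This works because the map $\cM_{k^r}$ is injective and sends $K(x)$ onto $K(x^{k^r})$. To make the coefficients $\cM_{k^r}$-images, multiply by suitable conjugates, that is, use the norm from $K(x)$ down to $K(x^{k^r})$.

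A cleaner way to run this step is to take the relation of minimal length and argue as in the standard proof that $P_0$ can be assumed nonzero. I expect this step, ensuring $Q_0 \ne 0$, to be the only mildly delicate point. After clearing denominators, it yields a $k^r$-Mahler equation.

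\emph{Parts (2) and (3).} For these I compare the kernels directly. The $k^r$-kernel of $f$ is the set of $n \mapsto f(k^{re} n + s)$ with $0 \le s < k^{re}$, and this is a subset of the $k$-kernel. So $k$-regular implies $k^r$-regular, and $k$-automatic implies $k^r$-automatic.

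Conversely, every element of the $k$-kernel has the form $n \mapsto f(k^{e} n + s)$ with $0 \le s < k^e$. Write $e = rq - t$ with $0 \le t < r$. Then $k^{e} n + s = k^{rq}\lfloor n/k^{t}\rfloor + (k^e (n \bmod k^t) + s)$. Hence the $k$-kernel element is obtained from the finitely many $k^r$-kernel elements $m \mapsto f(k^{rq} m + s')$, with $s' = k^e c + s$ for $0 \le c < k^t$, by the operation $g \mapsto (n \mapsto g_{n \bmod k^t}(\lfloor n/k^t\rfloor))$.

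Precisely, let $h_0, \dots, h_{k^t-1}$ be the $k^r$-kernel elements with $h_c(m) = f(k^{rq}m + k^e c + s)$. Then the $k$-kernel element equals the interleaving $n \mapsto h_{n \bmod k^t}(\lfloor n/k^t\rfloor)$. Interleaving $k^t$ sequences is an $R$-linear operation, and $t$ ranges over finitely many values.

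So if the $k^r$-kernel generates a finitely generated module $M$, the $k$-kernel lies in the finitely generated module $\sum_{t<r} \mathrm{Interleave}_t(M^{k^t})$. If the $k^r$-kernel is finite, there are only finitely many such interleavings. For the regular case, if $R$ is not Noetherian, note that the submodule generated by the $k$-kernel need not be finitely generated merely because it sits inside a finitely generated module.

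To avoid this issue, I would instead use the equivalent linear-representation characterization \eqref{eq:lin-rep}. Given base-$k^r$ matrices $B(d)$ for $0 \le d < k^r$, I build a base-$k$ representation on the enlarged state space $R^{d}\otimes(\text{words of length}<r)$, which records partial digits. Alternatively, I can simply cite \cite{allouche-shallit92}, where this equivalence is Theorem 2.9.
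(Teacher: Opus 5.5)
The paper states this lemma as a standard fact and gives no proof, so there is nothing to compare against. Judged on its own, your argument for (2) and (3) is complete. The key step of (1), however, does not work as you describe it.

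\textbf{The gap in (1).} Multiplying the relation $\sum_{j\ge j_0}Q_j\cM_{k^r}^jF=0$ by norm-type conjugates of a single coefficient can never make \emph{all} coefficients $\cM_{k^r}$-images. That would require $Q_j/Q_{j_0}\in K(x^{k^r})$ for every $j$, which fails in general. What you need is a $K(x^{k^r})$-linear projection, not a multiplication.

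Here is how to get it. Since $j_0\ge 1$, every $\cM_{k^r}^jF$ with $j\ge j_0$ lies in $K(\!(x^{k^r})\!)$. So divide the relation by $Q_{j_0}$ and apply the trace of $K(x)/K(x^{k^r})$; over $\algc{K}$ this means summing the relation over the substitutions $x\mapsto\zeta x$ with $\zeta^{k^r}=1$, which fix each $\cM_{k^r}^jF$. The result is a relation with coefficients in $K(x^{k^r})$ and coefficient $k^r\ne 0$ at index $j_0$. Injectivity of $\cM_{k^r}$ then removes one iterate, and you repeat $j_0$ times.

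Equivalently, use Cartier operators, i.e.\ compare components in $K(\!(x)\!)=\bigoplus_{s<k^r}x^sK(\!(x^{k^r})\!)$. This is exactly the argument of \cref{l:mahler-eqn-l-power}, taken with $l=1$ and base $k^r$ (its proof does not depend on the base), applied after clearing denominators. Your remark about taking ``the relation of minimal length'' does not by itself supply this step.

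\textbf{A smaller slip in (1).} To show $\dim_{K(x)}V<\infty$ you need the \emph{top} coefficient to be nonzero, not $P_0$. Choose $n$ maximal with $P_n\ne0$; if there is no such $n\ge1$, then $F=0$. Then $P_n(x^{k^{m-n}})\ne0$ lets you solve for $\cM_k^mF$ in terms of lower iterates. The condition $P_0\ne0$ only lets you solve for the lowest iterate in terms of higher ones, which does not bound the dimension.

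\textbf{Parts (2) and (3).} Your interleaving identity is correct:
\[
k^en+s=k^{rq}\lfloor n/k^t\rfloor+(k^ec+s), \qquad k^ec+s<k^{rq}.
\]
The Noetherian worry is moot, because the lemma concerns $F\in\power{K}$. All modules are $K$-vector spaces, so the span of the $k$-kernel lies in a finite-dimensional space and is therefore finite-dimensional. Had it been a real issue, it would equally have affected your ``trivial'' direction, since the span of the $k^r$-kernel is likewise only a subspace of the span of the $k$-kernel. No detour through linear representations is needed.
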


Konieczny, Lemańczyk, and Müllner's characterized multiplicative automatic sequences \cite{konieczny-lemanczyk-muellner22}.
Their result is stated for the field of complex numbers, but the version for arbitrary fields that we need (\cref{t:mult-auto}) easily follows by embedding the (finitely generated) image of $f$ into $\C$ while preserving multiplication, using the next lemma.

\begin{lemma}
  If $L$ is a field and $S \subseteq (L,\cdot)$ is a finitely generated subsemigroup, then there exists an injective semigroup homomorphism $\varphi\colon S \to (\C,\cdot)$.
\end{lemma}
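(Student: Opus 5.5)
The plan is to reduce to finitely generated abelian groups and use the structure theorem, handling the element $0$ separately. Since $L$ is a field, it has no zero divisors, so $S^* \coloneqq S \setminus \{0\}$ is again a subsemigroup of $(L,\cdot)$, namely of the group $L^\times$. If $S$ is generated by $s_1$, \dots,~$s_m$, let $G \subseteq L^\times$ be the subgroup generated by the nonzero $s_j$. Then $G$ is a finitely generated abelian group containing $S^*$. I will construct an injective group homomorphism $\psi\colon G \to \C^\times$. After that I set $\varphi|_{S^*} = \psi|_{S^*}$ and, if $0 \in S$, $\varphi(0)=0$.

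The map $\varphi$ will then be multiplicative on $S$. Products of nonzero elements are handled by $\psi$. Any product involving $0$ equals $0$ in both $L$ and $\C$. The map is injective because $\psi$ is injective and never takes the value $0$.

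To build $\psi$, I apply the structure theorem: $G \cong \Z^r \times T$, where $T$ is the torsion subgroup of $G$. The group $T$ is a finite subgroup of the multiplicative group of a field, hence cyclic, say of order $N$ with generator $\tau$. I fix elements $g_1$, \dots,~$g_r \in G$ so that every $g \in G$ is uniquely of the form $g = g_1^{a_1}\cdots g_r^{a_r} \tau^{b}$ with $a_i \in \Z$ and $0 \le b < N$. I then choose distinct rational primes $p_1$, \dots,~$p_r$ and a primitive $N$-th root of unity $\zeta \in \C$. Define
\[
\psi\bigl(g_1^{a_1}\cdots g_r^{a_r}\tau^{b}\bigr) = p_1^{a_1}\cdots p_r^{a_r}\,\zeta^{b}.
\]
This is a well-defined homomorphism because $\zeta$ has order exactly $N$.

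For injectivity of $\psi$, suppose $\psi(g)=1$. Taking absolute values gives $\prod p_i^{a_i}=1$, so all $a_i=0$ by unique factorization in $\Q^\times$. Then $\zeta^b=1$ forces $N \mid b$, so $g=1$.

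There is no real obstacle in this argument. The only points needing care are the two facts used above. First, a finite subgroup of $L^\times$ is cyclic; this is standard and is what makes the torsion part embed into the roots of unity of $\C$ regardless of the characteristic of $L$. Second, $0$ must be split off because it is not in the group $L^\times$.
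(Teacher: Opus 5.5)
Your proposal is correct and follows essentially the same route as the paper: pass to the finitely generated group $G$ generated by $S\setminus\{0\}$, write $G \cong T \times \Z^r$ with $T$ cyclic because it is a finite subgroup of $L^\times$, embed $T$ into $\mu_N(\C)$ and the free part via distinct primes, and send $0$ to $0$. One small remark: well-definedness of $\psi$ only needs $\zeta^N = 1$, while the fact that $\zeta$ has order exactly $N$ is what you use for injectivity.
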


\begin{proof}
  Let $G \subseteq L^\times$ be the subgroup generated by $S \setminus \{0\}$.
  Then $G$ is a finitely generated abelian group, and hence isomorphic to $T \times \Z^m$ for some finite abelian group $T$.
  Since $L$ is a field, the group $T$ is cyclic, say $T \cong \Z/n\Z$ for some $n \ge 1$.
  Taking $p_1$, \dots,~$p_m$ to be distinct primes, we get an isomorphism $G \to \mu_n(\C) \times \langle p_1 \rangle \times \cdots \times \langle p_m \rangle \subseteq (\C,\cdot)$.
  Mapping $0$ to $0$ (if $0 \in S$) gives the desired homomorphism.
\end{proof}

\section{Multiplicative \texorpdfstring{$k$}{k}-regular sequences} \label{sec:regular-case}

In this section we characterize $k$-regular series $F \in \power{K}$ with multiplicative coefficients.
This results in a first version of our main theorem in \cref{p:explicit-form}, under the stronger hypothesis that the series is not just $k$-Mahler but even $k$-regular.
First we consider the case in which $k$ is not a prime power.

\begin{proposition} \label{p:regular-distinct-primes}
  Suppose that $k$ has at least two distinct prime divisors.
  If $F \in \power{K}$ is a $k$-regular series with multiplicative coefficients, then $F$ is rational.
\end{proposition}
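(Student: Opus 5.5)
The plan is to reduce to the automatic case modulo maximal ideals, apply \cref{t:mult-auto} and Cobham's theorem there, and then lift back to characteristic $0$.

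First I pass to a finitely generated ring. By the linear representation \eqref{eq:lin-rep}, there is a finitely generated subring $R \subseteq K$ over which $f$ is $k$-regular. Enlarging $R$ by finitely many elements does not matter, so I may assume $f$ takes values in $R$. Multiplicativity is an identity among values of $f$, so it is preserved under any ring homomorphism.

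Next, let $\mathfrak m \subseteq R$ be a maximal ideal. Since $R$ is a finitely generated $\Z$-algebra, $R/\mathfrak m$ is a finite field. The reduction $\bar f = f \bmod \mathfrak m$ is therefore $k$-regular over a finite ring, hence finitely valued. Its $k$-kernel lies in the finitely generated module spanned over the finite ring $R/\mathfrak m$, so the kernel is finite and $\bar f$ is $k$-automatic. Moreover $\bar f$ is multiplicative with values in the field $R/\mathfrak m$. By \cref{t:mult-auto}, which is stated for arbitrary characteristic, $\bar f$ is $p$-automatic for some prime $p$ (depending on $\mathfrak m$). Because $k$ has at least two distinct prime divisors, $k$ and $p$ are multiplicatively independent. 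Cobham's theorem (\cref{c:cobham}) then shows that $\bar f$ is eventually periodic, and in particular $\ell$-automatic for every $\ell \ge 2$.

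The remaining step is to lift. Fix a prime $\ell$ that is multiplicatively independent of $k$, for instance a prime not dividing $k$. Then:
\begin{itemize}
  \item $f$ is $k$-regular over $R$;
  \item for every maximal ideal $\mathfrak m$ of $R$, the reduction $f \bmod \mathfrak m$ is $\ell$-automatic.
\end{itemize}
I would invoke Bell's lifting result \cite[Theorem 3.11]{bell05-cobham} to conclude that $f$ is $\ell$-regular over $R$, and hence over $K$. That result controls regularity of a sequence over a finitely generated ring by its reductions modulo maximal ideals, in the presence of $k$-regularity in characteristic $0$. Once $f$ is both $k$-regular and $\ell$-regular, it is both $k$-Mahler and $\ell$-Mahler by Becker's theorem. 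By \cref{t:cobham-mahler}, $F$ is then rational.

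I expect the main obstacle to be this lifting step: checking that the hypotheses of Bell's theorem are exactly what we have. If it requires a uniform bound on the automata modulo $\mathfrak m$, I would obtain one from the fixed $k$-regular representation. If instead the lifting is formulated directly as ``eventually periodic modulo all maximal ideals implies rational'', I would apply it in that form and skip the detour through $\ell$-regularity.
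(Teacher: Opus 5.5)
Your argument matches the paper's: pass to a finitely generated $\Z$-algebra $R$ over which $F$ is $k$-regular, reduce modulo each maximal ideal to get a multiplicative $k$-automatic sequence over a finite field, apply \cref{t:mult-auto} and Cobham's theorem to get eventual periodicity, and lift via \cite[Theorem 3.11]{bell05-cobham}. The paper uses that theorem in exactly the form you anticipate at the end ($k$-regular over $R$ and eventually periodic modulo every maximal ideal implies rational), so apply it directly: it does not itself give $\ell$-regularity, and the detour through $\ell$, Becker's theorem and \cref{t:cobham-mahler} is unnecessary.
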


\begin{proof}
  Since $F$ is $k$-regular, there exists a finitely generated $\Z$-algebra $R \subseteq K$ such that $F \in R\llbracket x \rrbracket$ and with $F$ being $k$-regular over $R$.
  By \cite[Theorem 3.11]{bell05-cobham}, it suffices to show that the reduction $\overline{F} \in R/M\llbracket x \rrbracket$ has eventually periodic coefficients for every maximal ideal $M$ of $R$.

  Let $M$ be a maximal ideal of $R$. Then $R/M$ is a finite field (by Zariski's Lemma), and therefore $\overline{F} \in R/M\llbracket x \rrbracket$ is $k$-regular and finitely valued, hence $k$-automatic.
  But $\overline{F}$ also has multiplicative coefficients. 
  Konieczny, Lemańczyk, and Müllner's characterization of multiplicative automatic sequences (\cref{t:mult-auto}) shows that $\overline{F}$ is $p$-automatic for some prime $p$.
  Thus, the series $\overline{F}$ is both $k$- and $p$-automatic, and $k$ and $p$ are multiplicatively independent, because $k$ has at least two distinct prime divisors.
  Now Cobham's Theorem (\cref{c:cobham}) implies that $\overline{F}$ has eventually periodic coefficients.
\end{proof}

Multiplicative coefficient sequences of rational series are characterized by \cref{t:mult-lrs}.
To obtain a unified result in all cases, it is useful to observe that they also possess the following representation.

\begin{lemma} \label{l:mult-rational-prime-repr}
  Let $F = \sum_{n=1}^\infty f(n) x^n \in K\llbracket x \rrbracket$ be a rational series with multiplicative coefficients and let $p$ be a prime number.
  Then there exist $r \ge 0$ and a multiplicative eventually periodic function $\chi\colon \N \to K$ such that
  \[
  f(p^i m) = p^{ri} \chi(p^i) \cdot m^r \chi(m) \qquad\text{for all $i \ge 0$ and $m \in \N$ with $p \nmid m$}.
  \]
  Moreover, the sequence $g \colon \Z_{\ge 0} \to K$, $i \mapsto p^{ri} \chi(p^i)$ is a linear recurrence sequence.
\end{lemma}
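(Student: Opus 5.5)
Apply \cref{t:mult-lrs} to the rational series $F$. This gives $r \ge 0$ and an eventually periodic multiplicative $\chi\colon \N \to K$ with $f(n) = n^r\chi(n)$ for all $n \ge 1$. We use exactly these $r$ and $\chi$.

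For $i \ge 0$ and $m \in \N$ with $p \nmid m$, the integers $p^i$ and $m$ are coprime, so multiplicativity of $\chi$ gives $\chi(p^i m) = \chi(p^i)\chi(m)$. Hence
\[
f(p^i m) = (p^i m)^r \chi(p^i m) = p^{ri}\chi(p^i)\cdot m^r\chi(m).
\]
(For $i=0$ this uses $\chi(1)=1$. If $\chi$ were identically zero, then $f \equiv 0$ and the identity holds trivially; otherwise multiplicativity forces $\chi(1)=1$.) This proves the displayed formula.

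It remains to show that $g(i) = p^{ri}\chi(p^i)$ is an LRS. The sequence $i \mapsto p^{ri} = (p^r)^i$ is geometric, hence an LRS. LRS are closed under Hadamard products, so it suffices to show that $h(i) = \chi(p^i)$ is an LRS. By \cref{l:mult-eventually-periodic}, $\chi$ is either eventually zero or periodic.

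\begin{itemize}
\item If $\chi$ is eventually zero, then $h$ is eventually zero.
\item If $\chi$ is periodic with period $N$, then $p^i \bmod N$ is eventually periodic in $i$, because the residues lie in a finite set and each is determined by the previous one via multiplication by $p$. Hence $h$ is eventually periodic.
\end{itemize}

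In both cases $h$ is eventually periodic. An eventually periodic sequence is an LRS: if $h(i+T) = h(i)$ for all $i \ge i_0$, then $h$ satisfies the recurrence $h(n) = h(n-T)$ for $n \ge i_0 + T$, which can be written in the form required by the definition (pad with zero coefficients up to order $i_0+T$). Alternatively, its generating series is $P(x)/(1-x^T)$ for a polynomial $P$, which is rational. Therefore $g$ is an LRS.

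There is no serious obstacle here. The only points needing care are the degenerate case $\chi \equiv 0$ and the eventual periodicity of $\chi(p^i)$, which follows from \cref{l:mult-eventually-periodic} as above.
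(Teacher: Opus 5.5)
Your proof is correct and matches the paper's argument: apply \cref{t:mult-lrs}, substitute $n=p^i m$ using multiplicativity of $\chi$, and obtain $g$ as the Hadamard product of the geometric sequence $i\mapsto p^{ri}$ with the eventually periodic sequence $i\mapsto\chi(p^i)$. Your extra justification of eventual periodicity is fine, but you do not need \cref{l:mult-eventually-periodic}: if $\chi(n+d)=\chi(n)$ for all $n\ge N$, then for $p^i\ge N$ the value $\chi(p^i)$ depends only on $p^i \bmod d$, and this residue is eventually periodic in $i$.
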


\begin{proof}
  By \cref{t:mult-lrs}, there exist $r \ge 0$ and a multiplicative eventually periodic function $\chi\colon \N \to K$ such that $f(n) = n^r \chi(n)$ for all $n \ge 1$.
  Substituting $n = p^i m$ with $p \nmid m$ gives the desired representation.
  Since $i \mapsto p^{ri}$ is an LRS, and $i \mapsto \chi(p^i)$, being eventually periodic, is trivially an LRS, so is their product sequence $g$.
\end{proof}

We now deal with the more interesting case in which $k$ is a prime power.

\begin{lemma} \label{l:regular-single-prime}
  Suppose $k=p^e$ with $p$ prime and $e \ge 1$.
  If $F = \sum_{n=1}^\infty f(n) x^n \in K\llbracket x \rrbracket$ is a $k$-regular series with multiplicative coefficients, then there exists a rational series $H \in \power{K}$ such that 
  \begin{equation} \label{eq:decomp-f-h}
  F(x) = \sum_{i=0}^\infty f(p^i) H(x^{p^i}).
  \end{equation}
\end{lemma}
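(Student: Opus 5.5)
The natural candidate is the "$p$-free part" of $f$. Define $h\colon \N \to K$ by $h(n) = f(n)$ if $p \nmid n$ and $h(n) = 0$ if $p \mid n$, and set $H = \sum_{n \ge 1} h(n) x^n$. Every $n \in \N$ can be written uniquely as $n = p^i m$ with $p \nmid m$. Since $p^i$ and $m$ are coprime, multiplicativity gives $f(p^i m) = f(p^i) f(m) = f(p^i) h(m)$. Comparing coefficients, the right-hand side of \eqref{eq:decomp-f-h} has $n$-th coefficient $\sum_{i,m:\,p^i m = n} f(p^i) h(m) = f(n)$, so \eqref{eq:decomp-f-h} holds. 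It therefore only remains to show that $H$ is rational.

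First I show that $h$ is $k$-regular over a suitable finitely generated ring. As in the proof of \cref{p:regular-distinct-primes}, choose a finitely generated $\Z$-algebra $R \subseteq K$ with $f$ taking values in $R$ and $f$ being $k$-regular over $R$; then $h$ also takes values in $R$. The subsequences $n \mapsto h(pn+b)$ are $n \mapsto f(pn+b)$ for $1 \le b \le p-1$, and identically $0$ for $b=0$. These are $k$-regular over $R$ by \cref{l:regular-arith-progr} (applied over $R$) with $a=p$. Applying the same lemma in the converse direction shows that $h$ is $k$-regular over $R$.

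Next, $h$ is multiplicative: $h(1) = f(1) = 1$ (or $f \equiv 0$, which is trivial). If $\gcd(m,n)=1$, then $p \mid mn$ if and only if $p$ divides exactly one of $m$ and $n$, and in that case both $h(mn)$ and $h(m)h(n)$ vanish. Otherwise $h(mn) = f(mn) = f(m)f(n) = h(m)h(n)$.

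Now I run the lifting argument of \cite[Theorem 3.11]{bell05-cobham}, exactly as in \cref{p:regular-distinct-primes}. It suffices to show that for each maximal ideal $M$ of $R$, the reduction $\overline{h}$ over the finite field $R/M$ is eventually periodic. Such a reduction is $k$-regular and finitely valued, hence $k$-automatic, and hence $p$-automatic; it is also multiplicative. By \cref{t:mult-auto}, $\overline{h}$ is $q$-automatic for some prime $q$ and has a representation $\overline{h}(q^i m) = g(i)\chi(m)$ with $\chi$ eventually periodic.

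\begin{itemize}
\item If $q \ne p$, then $\overline{h}$ is both $p$- and $q$-automatic with $p,q$ multiplicatively independent, so Cobham's Theorem (\cref{c:cobham}) gives eventual periodicity.
\item If $q = p$, then for $p \nmid m$ we have $\overline{h}(m) = g(0)\chi(m) = \chi(m)$, while $\overline{h}$ vanishes on multiples of $p$. Hence $\overline{h}(n) = \chi(n)\cdot\mathbf 1_{p \nmid n}$, which is a product of eventually periodic sequences and so is eventually periodic.
\end{itemize}

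Thus every reduction of $h$ is eventually periodic, and \cite[Theorem 3.11]{bell05-cobham} yields that $H$ is rational (over $R$, hence over $K$).

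The main obstacle is the case $q = p$ in the last step. There the Konieczny--Lemańczyk--Müllner representation does not directly give periodicity, and one must use that $h$ was constructed to kill all multiples of $p$, so only the eventually periodic factor $\chi$ survives. One should also be careful that the application of \cref{t:mult-auto} is compatible with the uniqueness caveats there; this causes no problem, since eventual periodicity of $\overline{h}$ is exactly what we want in any case.
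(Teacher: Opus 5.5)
Your proposal is correct and follows the paper's proof essentially step for step. It uses the same $p$-free part $h$ and the same reduction modulo maximal ideals of a finitely generated $\Z$-algebra via \cite[Theorem 3.11]{bell05-cobham}. The case split is also the paper's: $q \ne p$ is handled by Cobham, and for $q = p$ the vanishing of $h$ on multiples of $p$ leaves only the eventually periodic factor $\chi$. Your explicit checks that $h$ is multiplicative and is $k$-regular over $R$ via \cref{l:regular-arith-progr} fill in details the paper states without elaboration.
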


\begin{proof}
  Define $h\colon \N \to K$ by 
  \[
  h(n) = \begin{cases}
    f(n) & \text{if } p \nmid n,\\
    0    & \text{if } p \mid n,
  \end{cases}
  \]
  and let $H = \sum_{n=1}^\infty h(n) x^n \in K\llbracket x \rrbracket$.
  Then $h$ is $k$-regular by \cref{l:regular-arith-progr}, and it is clearly multiplicative.
  The multiplicativity of $f$ implies the decomposition \eqref{eq:decomp-f-h}.

  It remains to show that $H$ is rational. 
  As in the proof of \cref{p:regular-distinct-primes}, there exists a finitely generated $\Z$-algebra $R \subseteq K$ such that $H \in R\llbracket x \rrbracket$ and $H$ is $k$-regular over $R$, and it suffices to show that $\overline{H} \in R/M\llbracket x \rrbracket$ has eventually periodic coefficients for every maximal ideal $M$ of $R$ \cite[Theorem 3.11]{bell05-cobham}.

  Let $M$ be a maximal ideal of $R$.
  Then $\overline{H} \in R/M\llbracket x \rrbracket$ is $p$-regular and finitely valued, hence $p$-automatic.
  Since $\overline{H}$ is $p$-automatic and  multiplicative, again applying \cref{t:mult-auto} implies that $\overline{H}$ is $q$-automatic for some prime $q$.

  If $p \ne q$, then Cobham's Theorem (\cref{c:cobham}) implies that $\overline{H}$ has eventually periodic coefficients.
  If $p=q$, then \cref{t:mult-auto} shows 
  \[
    h(n) \equiv g_{p}(\val_p(n)) \cdot \chi_p\bigg(\frac{n}{p^{\val_p(n)}}\bigg) \mod M,
  \]
  with $g_{p}\colon \N \to R/M$ eventually periodic and $\chi_p\colon \N \to R/M$ multiplicative and eventually periodic.
  Since $h(n)=0$ for $p \mid n$, we get 
  \[
  h(n) \equiv
  \begin{cases}
    g_p(0) \chi_p(n) & \mod M \text{ for $p \nmid n$,}\\
    0                & \mod M \text{ for $p \mid n$.}
  \end{cases}
  \]
  Thus, again $\overline{H}$ is eventually periodic.
\end{proof}

The following is the main result of this section, giving the explicit form of multiplicative $k$-regular sequences.

\begin{proposition} \label{p:explicit-form}
  Let $K$ be a field of characteristic $0$, let $k \ge 2$, and let $F=\sum_{n=1}^\infty f(n) x^n \in K\llbracket x \rrbracket$ be a $k$-regular series.
  If $f$ is multiplicative, then there exists a linear recurrence sequence $g\colon \Z_{\ge 0} \to K$ with $g(0)=1$, an integer $r \ge 0$, a prime $p$, and a multiplicative eventually periodic function $\chi\colon \N \to K$ such that
  \begin{equation}
    f(p^i m) = g(i) \cdot m^r \chi(m) \qquad\text{for all $i \ge 0$ and $m \in \N$ with $p \nmid m$}.
  \end{equation}
\end{proposition}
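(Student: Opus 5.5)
The argument splits according to whether $k$ is a prime power, combining \cref{p:regular-distinct-primes}, \cref{l:regular-single-prime}, \cref{t:mult-lrs} and \cref{l:mult-rational-prime-repr}. If $f$ is identically zero the statement is trivial: take $g$ with $g(0)=1$ and $g(i)=0$ for $i\ge 1$, and $\chi = 0$. So assume $f \ne 0$; then $f(1)=1$, since $f(n)=f(1)f(n)$ for all $n$.

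\emph{Case 1: $k$ has at least two distinct prime divisors.} By \cref{p:regular-distinct-primes}, $F$ is rational. Then \cref{l:mult-rational-prime-repr}, applied with an arbitrary prime $p$, gives the representation with $g(i) = p^{ri}\chi(p^i)$. This $g$ is an LRS, and $g(0)=\chi(1)=f(1)=1$.

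\emph{Case 2: $k=p^e$.} Take this prime $p$. By \cref{l:regular-single-prime}, the series $H=\sum_{p\nmid n} f(n)x^n$ is rational. Its coefficient sequence $h$ is multiplicative, as observed in that proof. Applying \cref{t:mult-lrs} to $H$ gives $r\ge 0$ and an eventually periodic multiplicative $\chi_0$ with $h(n)=n^r\chi_0(n)$ for all $n\ge1$. Set $\chi(m)=\chi_0(m)$ for $p\nmid m$ and $\chi(m)=0$ for $p\mid m$. This $\chi$ is multiplicative and eventually periodic, being the product of $\chi_0$ with the periodic indicator of $p\nmid m$. Multiplicativity of $f$ then gives, for $p\nmid m$,
\[
f(p^i m)=f(p^i)f(m)=f(p^i)\,h(m)=f(p^i)\, m^r\chi(m).
\]
So we may take $g(i)=f(p^i)$, which satisfies $g(0)=f(1)=1$.

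The remaining step, which I expect to be the main obstacle, is to show that $g(i)=f(p^i)$ is an LRS. Since $f$ is $p$-regular (regularity is unchanged by passing from base $k=p^e$ to base $p$), I would use the $p$-kernel. All subsequences $n\mapsto f(p^j n)$ lie in a finite-dimensional $K$-space, so there is a linear relation
\[
\sum_{j=0}^{d} c_j f(p^j n)=0 \quad\text{for all } n,
\]
with not all $c_j$ zero. Specializing to $n=p^i$ gives $\sum_j c_j\, g(i+j)=0$ for all $i\ge0$. If $c_{j_0}$ is the nonzero coefficient of largest index, this is a recurrence expressing $g(i+j_0)$ in terms of earlier values. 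Hence $g$ is an LRS. Equivalently, a $p$-regular sequence evaluated along $p^i$ has the form $uA(0)^{\,\cdot}A(1)A(0)^i v$ style products, which is visibly an LRS via \eqref{eq:lin-rep}.

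Finally, if $f(m)\ne0$ for some $p\nmid m$, then $\chi$ is nonzero and no separate consistency check is needed. The formula holds verbatim in all cases, since it was derived directly from multiplicativity of $f$.
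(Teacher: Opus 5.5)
Your proof is correct and follows the paper's argument essentially step for step. You use the same case split on whether $k$ is a prime power, the same lemmas (\cref{p:regular-distinct-primes} with \cref{l:mult-rational-prime-repr}, respectively \cref{l:regular-single-prime} with \cref{t:mult-lrs}), and the same choice $g(i)=f(p^i)$. The one small difference is how you show $g$ is an LRS: you use a linear dependence among the kernel sequences $n\mapsto f(p^j n)$ specialized at $n=p^i$, whereas the paper reads off $g(i)=uA(1)A(0)^i v$ from the linear representation \eqref{eq:lin-rep}. The two arguments are equivalent, and your closing aside about ``$uA(0)^{\cdot}A(1)A(0)^iv$ style products'' is unnecessary and can be dropped.
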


\begin{proof}
  If $k$ is not a prime power, then \cref{p:regular-distinct-primes} implies that $F$ is rational, and the result follows from \cref{l:mult-rational-prime-repr} (for an arbitrary prime $p$).

  Suppose now that $k=p^e$ for some prime $p$.
  \Cref{l:regular-single-prime} implies that $F$ admits a decomposition as in \eqref{eq:decomp-f-h}, with $H \in \power{K}$ rational.

  \Cref{t:mult-lrs} shows $h(n) = n^r \chi(n)$ for some integer $r \ge 0$ and some multiplicative eventually periodic function $\chi\colon \N \to K$.
  Defining $g\colon \Z_{\ge 0} \to K$ by $g(i) \coloneqq f(p^i)$, now \eqref{eq:decomp-f-h} shows
  \[
  f(n) = g(\val_p(n)) \cdot \bigg(\frac{n}{p^{\val_p(n)}}\bigg)^r \cdot \chi\bigg(\frac{n}{p^{\val_p(n)}}\bigg)
  \]
  for all $n \ge 1$.

  If $f=0$, we can clearly take $g(0)=1$ by taking $\chi=0$.
  If $f \ne 0$, then the multiplicativity of $f$ and $\chi$ implies $f(1)=1$ and $\chi(1)=1$.
  Now $g(0)=f(1)=1$.
  Finally, since $f$ is $p$-regular, it admits a linear representation as in \eqref{eq:lin-rep}. 
  It is then immediate that the sequence $i \mapsto g(i) = f(p^i) = (u A(1)) A(0)^{i} v$ is an LRS.
\end{proof}

The obtained representation is essentially unique, as the following proposition shows.

\begin{proposition} \label{p:uniqueness}
  Suppose $f\colon \N \to K$ has a representation as in \eqref{eq:explicit-formula} in \cref{t:mult-mahler-main}.
  \begin{enumerate}
  \item \label{uniq:g} If $f \ne 0$, then $g$ is uniquely determined by $f$.
  \item \label{uniq:chi-r} If $f(n) \ne 0$ for infinitely many $n$ with $p \nmid n$, and $\chi$ is chosen such that $\chi(n)=0$ if $p \mid n$, then $r$ and $\chi$ are also uniquely determined by $f$.
  \end{enumerate}
\end{proposition}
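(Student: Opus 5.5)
For \ref{uniq:g}, I will use the representation at $m=1$, which gives $f(p^i) = g(i)\cdot 1^r\chi(1) = g(i)\chi(1)$ for all $i \ge 0$. So it suffices to show $\chi(1)=1$ whenever $f \ne 0$. Since $\chi$ is multiplicative, $\chi(1)=\chi(1)^2$, hence $\chi(1)\in\{0,1\}$. If $\chi(1)=0$, then $\chi(m)=\chi(m\cdot 1)=\chi(m)\chi(1)=0$ for every $m$. The formula \eqref{eq:explicit-formula} would then give $f=0$, contradicting $f \ne 0$. Thus $\chi(1)=1$ and $g(i)=f(p^i)$ for all $i$, so $g$ is determined by $f$.

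For \ref{uniq:chi-r}, I restrict to $i=0$: since $g(0)=1$, we have $f(m)=m^r\chi(m)$ for every $m$ with $p\nmid m$. The normalization $\chi(m)=0$ for $p\mid m$ means $\chi$ is determined once $r$ is, via $\chi(m)=f(m)/m^r$ for $p\nmid m$ (recall $m^r\ne 0$ in characteristic $0$). So the real content is the uniqueness of $r$. Suppose $m^r\chi(m)=m^s\chi'(m)$ for all $m$ coprime to $p$, where $\chi,\chi'$ are eventually periodic and $r<s$. Then $\chi(m)=m^{s-r}\chi'(m)$ for all such $m$. By hypothesis, $f(m)\neq0$ for infinitely many $m$ with $p\nmid m$, and at those $m$ both $\chi(m)$ and $\chi'(m)$ are nonzero.

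Now I use eventual periodicity. Choose a common period $N$ for $\chi$ and $\chi'$, and a threshold $n_0$ beyond which both are periodic. Pick $m_0\ge n_0$ with $p\nmid m_0$ and $\chi'(m_0)\ne0$. Along the progression $m=m_0+tNp$, every term is coprime to $p$ and $\chi(m),\chi'(m)$ are constant. The identity then gives $\chi(m_0)=(m_0+tNp)^{s-r}\chi'(m_0)$ for all $t\ge0$. The right-hand side is a nonconstant polynomial in $t$ with nonzero leading coefficient, so in characteristic $0$ it takes infinitely many distinct values, while the left-hand side is constant. This contradiction forces $r=s$, and then $\chi=\chi'$ on integers coprime to $p$, hence everywhere by the normalization.

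The only point needing care is choosing the progression so that it stays coprime to $p$ and inside the periodic range; including the factor $p$ in the step $Np$ handles the first, and starting at $m_0\ge n_0$ handles the second. I do not expect any real obstacle.
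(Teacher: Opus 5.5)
Your proof is correct and follows essentially the same route as the paper. For part~(1) you evaluate at $m=1$ and show $\chi(1)=1$; the paper reads this off from $\chi(1)=f(1)=1$. For part~(2) you use eventual periodicity to get an infinite set on which the periodic factors are constant, and then invoke a polynomial identity in characteristic~$0$. Your progression $m_0+tNp$ makes $\chi$ and the competing $\chi'$ constant at the same time, which spells out a step that the paper's brief appeal to the identity theorem for polynomials leaves implicit.
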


\begin{proof}
  \ref{uniq:g} Since $f$ and $\chi$ are multiplicative and nonzero, it holds that $\chi(1)=f(1)=1$.
  We get $g(i) = f(p^i)$ for all $i \ge 0$.

  \smallskip
  \ref{uniq:chi-r}
  For $n \in \N$ with $p \nmid n$, we have $f(n) = n^r \chi(n)$.
  Since $f(n) \ne 0$ for infinitely many such $n$, necessarily $\chi(n) \ne 0$ infinitely often.
  Since $\chi$ is eventually periodic, we can find some $0 \ne c \in K$ and an infinite set $S$ of $n$ with $\chi(n)=c$.
  Then $n^r c = m^r c$ for all $m$,~$n \in S$ implies the uniqueness of $r$ (by the identity theorem for polynomials).
  With $r$ now fixed, the formula $f(n)=n^r \chi(n)$ also determines $\chi(n)$ for all $n$ with $p \nmid n$.
\end{proof}

\begin{remark}
The uniqueness result in \cref{p:uniqueness} is the best possible.
For \ref{uniq:g} this is obvious. For \ref{uniq:chi-r}, if $f(n)=0$ for all but finitely many $n \in \N$ with $p \nmid n$, then $\chi$ is eventually zero, and we can replace $\chi(n)$ by $\chi'(n)=\chi(n) n^s$ with any integer $s \le r$, to obtain $n^r \chi(n) = n^{r-s} \chi'(n)$.
\end{remark}

\section{Mahler denominators} \label{sec:mahler-denominator}

In preparation for dealing with the more general case of multiplicative $k$-Mahler series, we need to establish some properties of Mahler denominators.
In particular, in \cref{t:mahler-denominators}, we bound the Mahler denominator of a sum of two Mahler series.
To do so, we use the non-trivial results of \cite{adamczewski-bell-smertnig23} that characterize $k$-regular series among the $k$-Mahler series, see \cref{t:abs-regular} (we are not aware of a more direct proof).

\subsection{Mahler denominators of series in \texorpdfstring{$K\llbracket x^l \rrbracket$}{K[[x\^{}l]]}}

We start with some results on Mahler equations and denominators for series of the form $F(x) = \widetilde F(x^l)$.

In the formulation and the proof of the first lemma we use Cartier operators.
For each $l \ge 1$ and $0 \le r \le l-1$, the \defit{Cartier operator} of the residue class $r$ (with respect to the modulus $l$) is defined by 
\[
\Delta_r^{(l)} \colon K\llbracket x \rrbracket \to K\llbracket x \rrbracket, \quad \sum_{n=0}^\infty g(n) x^n \mapsto \sum_{n=0}^\infty g(ln + r) x^n
\]
Then $\Delta_r^{(l)} \circ \Delta_{r'}^{(l')} = \Delta_{l'r+r'}^{(ll')}$ for all $l$,~$l' \ge 1$ and $0 \le r \le l-1$, $0 \le r' \le l'-1$.
Moreover, for every $P \in K[x]$ and $G \in K\llbracket x \rrbracket$,
\[
\Delta_r^{(l)}\big(P(x) G(x^l)\big) = \Delta_r^{(l)}(P(x)) G(x) \qquad\text{and}\qquad \deg \Delta_r^{(l)}( P(x)) \le \frac{\deg P(x)}{l}.
\]
These are easy to check when $P$ is a monomial, and they extend to arbitrary polynomials by $K$-linearity of $\Delta_r^{(l)}$.
Observe also that if $P \in K[x]$ is nonzero and $l \ge 1$, it may happen that $\Delta_r^{(l)}(P)=0$ for some $r$, but there exists at least one $r$ for which $\Delta_r^{(l)}(P) \ne 0$.

\begin{lemma} \label{l:mahler-eqn-l-power}
  Let $F \in K\llbracket x \rrbracket$ be such that $F = \widetilde F(x^l)$ for some $l \ge 1$ and $\widetilde F\in K\llbracket x \rrbracket$.
  If
  \begin{equation} \label{eq:general-mahler-eqn}
  \sum_{i=0}^n P_i(x) F(x^{k^i}) = A(x),
  \end{equation}
  with $P_0$, \dots,~$P_n$, $A \in K[x]$ and $P_i \ne 0$ for some $i$, then
  there exist $Q_0$, \dots,~$Q_n$, $B \in K[x]$ with $Q_0 \ne 0$ such that
  \[
  \sum_{i=0}^n Q_i(x^l) F(x^{k^i}) = B(x^l).
  \]
  Moreover, if $i$ is minimal with $P_i \ne 0$, then we can take $Q_0 = \Delta_{r}^{(lk^i)}(P_i)$ for some $0 \le r \le lk^i-1$.
\end{lemma}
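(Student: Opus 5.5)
Let $i_0$ be the minimal index with $P_{i_0} \ne 0$. We may drop the vanishing terms and write the equation as $\sum_{i=i_0}^n P_i(x) F(x^{k^i}) = A(x)$. Since $F(x) = \widetilde F(x^l)$, we have $F(x^{k^i}) = \widetilde F(x^{lk^i})$ for every $i$. The plan is to apply a single Cartier operator $\Delta_r^{(lk^{i_0})}$ to the whole equation. This lowers the $i_0$-th term to a term with unshifted argument, and it keeps all other terms in a form that is a polynomial in $x^l$ times $F$ evaluated at a Mahler iterate. Choose $r$ with $0 \le r \le lk^{i_0}-1$ such that $Q_0 \coloneqq \Delta_r^{(lk^{i_0})}(P_{i_0}) \ne 0$; such an $r$ exists by the observation preceding the lemma.

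We compute the effect of $\Delta \coloneqq \Delta_r^{(lk^{i_0})}$ term by term. For $i \ge i_0$, write $\widetilde F(x^{lk^i}) = G_i(x^{lk^{i_0}})$ with $G_i(y) = \widetilde F(y^{k^{i-i_0}})$. The rule $\Delta(P(x) G(x^{lk^{i_0}})) = \Delta(P)(x)\, G(x)$ then gives
\[
\Delta\big(P_i(x) F(x^{k^i})\big) = \Delta(P_i)(x)\, \widetilde F(x^{k^{i-i_0}}).
\]
Now substitute $x \mapsto x^l$. Since $\widetilde F(x^{lk^{j}}) = F(x^{k^j})$, the transformed equation becomes
\[
\sum_{j=0}^{n-i_0} \Delta(P_{i_0+j})(x^l)\, F(x^{k^j}) = \Delta(A)(x^l).
\]
We set $Q_j \coloneqq \Delta(P_{i_0+j})$ for $0 \le j \le n-i_0$, set $Q_j \coloneqq 0$ for $j > n-i_0$, and set $B \coloneqq \Delta(A)$. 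This gives the desired relation with $Q_0 = \Delta_r^{(lk^{i_0})}(P_{i_0}) \ne 0$, which also proves the ``moreover'' part.

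The only points needing care are checking that the substitution $x \mapsto x^l$ is legitimate and faithful, and checking that the composite Cartier rule applies with modulus $lk^{i_0}$. The substitution $x \mapsto x^l$ is an injective ring endomorphism of $K\llbracket x\rrbracket$, so the identity is preserved. The Cartier rule is the stated identity $\Delta_r^{(m)}(P(x)G(x^m)) = \Delta_r^{(m)}(P)\,G$ with $m = lk^{i_0}$, applied to $G = G_i$. I expect the main (mild) obstacle to be this index bookkeeping: the shift from $\widetilde F$ back to $F$ must land exactly on the iterates $F(x^{k^j})$ starting at $j=0$. The choice of modulus $lk^{i_0}$, as opposed to just $l$, is what ensures this, and it is also what makes $Q_0$ nonzero.
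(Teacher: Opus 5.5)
Your proof is correct and is essentially the paper's argument. The paper first applies $\Delta_s^{(k)}$ $i$ times to bring the lowest nonzero coefficient down to index $0$, then applies $\Delta_r^{(l)}$ and substitutes $x^l$ for $x$; you apply the single operator $\Delta_r^{(lk^{i_0})}$ directly, which by the composition rule $\Delta_r^{(l)} \circ \Delta_{r'}^{(l')} = \Delta_{l'r+r'}^{(ll')}$ amounts to the same thing, and it makes the ``moreover'' clause immediate without tracking how the residues compose.
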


\begin{proof}
  Let $i \ge 0$ be minimal with $P_i \ne 0$.
  First suppose $i \ge 1$.
  Then there exists some $0 \le s \le k-1$ such that $\Delta_s^{(k)}(P_i) \ne 0$.
  Applying $\Delta_s^{(k)}$ to \eqref{eq:general-mahler-eqn} gives an equation of the same type, of order at most $n-1$, and with $F(x^{k^{i-1}})$ having nonzero coefficient.
  After at most $i$ iterations, which corresponds to an application of an operator $\Delta_{r'}^{(k^i)}$, we can assume $i=0$.
  
  Now suppose we have an equation of the form \eqref{eq:general-mahler-eqn} with $P_0 \ne 0$.
  Choose $0 \le r \le l-1$ such that $\Delta_r^{(l)}(P_0) \ne 0$.
  Substituting $F=\widetilde F(x^l)$ into the Mahler equation, and applying $\Delta_r^{(l)}$, gives
  \[
  \sum_{j=0}^n \Delta_r^{(l)} P_j(x) \widetilde F(x^{k^j}) = \Delta_r^{(l)} A(x).
  \]
  Substituting $x^l$ for $x$, we get
  \[
  \sum_{j=0}^n (\Delta_r^{(l)} P_j)(x^l) F(x^{k^j}) = (\Delta_r^{(l)} A)(x^l).
  \]
  Setting $Q_j \coloneqq \Delta_r^{(l)} P_j$ and $B \coloneqq \Delta_r^{(l)} A$, we are done.  
\end{proof}

\begin{lemma} \label{l:divide-support}
  Let $F \in K\llbracket x \rrbracket$ be $k$-Mahler of order $n$.
  Suppose that $F = \widetilde F(x^l)$ for some $l \ge 1$ and $\widetilde F\in K\llbracket x \rrbracket$.
  \begin{enumerate}
  \item \label{div-sup:eqn} The series $F$ satisfies a Mahler equation of the form
  \[
  Q_0(x^l) F(x) = \sum_{j=1}^n Q_j(x^l) F(x^{k^j})
  \]
  with $Q_0$, \dots,~$Q_n \in K[x]$ and $Q_0 \ne 0$.

  \item \label{div-sup:denom} The Mahler denominator of $F$ satisfies $(\md_k F)(x)=(\md_k \widetilde F)(x^l)$. In particular $\md_k F$ is a polynomial in $x^l$.
  \end{enumerate}
\end{lemma}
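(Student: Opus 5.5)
For part (1), we start from a homogeneous Mahler equation of order $n$, namely $P_0(x)F(x) = \sum_{j\ge1} P_j(x) F(x^{k^j})$ with $P_0 \ne 0$, and read it as an equation of the form \eqref{eq:general-mahler-eqn} with $A=0$. The minimal index with nonzero coefficient is then $i=0$. \Cref{l:mahler-eqn-l-power} gives $Q_0,\dots,Q_n \in K[x]$ with $Q_0 \ne 0$ and $B=\Delta_r^{(l)}(0)=0$. Rearranging yields the claimed equation, and the order stays $n$. The lemma's construction also gives directly $Q_0 = \Delta_r^{(l)}(P_0)$ for some $r$.

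For part (2), we compare the ideals $I_F$ and $I_{\widetilde F}$ and prove two divisibilities.

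First, $\md_k F \mid (\md_k \widetilde F)(x^l)$. Take a Mahler equation $D(x)\widetilde F(x) = \sum_j R_j(x)\widetilde F(x^{k^j})$ with $D=\md_k\widetilde F$. Substituting $x\mapsto x^l$ and using $\widetilde F(x^{lk^j}) = F(x^{k^j})$ gives
\[
D(x^l)F(x) = \sum_j R_j(x^l) F(x^{k^j}).
\]
Hence $D(x^l)\in I_F$.

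Second, $(\md_k\widetilde F)(x^l) \mid \md_k F$. Take an equation for $F$ with $P_0=\md_k F$ and apply $\Delta_r^{(l)}$ for each $0\le r\le l-1$, exactly as in the proof of \cref{l:mahler-eqn-l-power}. This gives
\[
\Delta_r^{(l)}(P_0)(x)\,\widetilde F(x) = \sum_j \Delta_r^{(l)}(P_j)(x)\,\widetilde F(x^{k^j}).
\]
So each $\Delta_r^{(l)}(P_0)$ lies in $I_{\widetilde F}$, including the ones that vanish, and therefore $D \mid \Delta_r^{(l)}(P_0)$ for every $r$. Writing
\[
P_0(x) = \sum_{r=0}^{l-1} x^r\,(\Delta_r^{(l)}P_0)(x^l),
\]
we see that $D(x^l)$ divides every summand, hence divides $P_0=\md_k F$.

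The two divisibilities show that $\md_k F$ and $D(x^l)$ are associates. Both are normalized to have lowest nonzero coefficient $1$: for $\md_k F$ by definition, and for $D(x^l)$ because $D$ is normalized. So they are equal, and in particular $\md_k F$ is a polynomial in $x^l$.

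One point must hold for this to make sense: $\widetilde F$ has to be Mahler. This follows from any single instance of the displayed equation with $\Delta_r^{(l)}(P_0)\neq0$, which exists for some $r$. The step that needs most care is the decomposition of $P_0$ into its $l$ residue classes together with the vanishing cases; everything else is formal.
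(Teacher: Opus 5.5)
Your proof is correct. Part (1) and the first divisibility $\md_k F \mid (\md_k\widetilde F)(x^l)$ in part (2), obtained by substituting $x^l$ for $x$, are exactly the paper's argument. The reverse direction is where you differ.

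\emph{The paper.} It picks a single $r$ with $\Delta_r^{(l)}(P_0)\neq 0$ and gets $\md_k\widetilde F \mid \Delta_r^{(l)}(P_0)$. It then closes with a degree sandwich,
$\deg P_0 \le l\deg\md_k\widetilde F \le l\cdot\deg P_0/l$,
using the estimate $\deg\Delta_r^{(l)}(P)\le \deg P/l$. Equal degrees, together with the first divisibility, then force equality after normalization.

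\emph{Your route.} You apply all $l$ Cartier operators and get $\md_k\widetilde F \mid \Delta_r^{(l)}(P_0)$ for every $r$; the vanishing sections are trivially divisible. Reassembling $P_0=\sum_{r=0}^{l-1} x^r(\Delta_r^{(l)}P_0)(x^l)$ then gives $(\md_k\widetilde F)(x^l)\mid P_0$ directly.

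\emph{Comparison.} Your version is a pure divisibility argument. It needs no degree estimate, and the two divisibilities are proved independently; in the paper, the degree bound uses the first divisibility. Applied to an arbitrary element of $I_F$, your argument also gives $I_F\subseteq(\md_k\widetilde F)(x^l)\,K[x]$ at once. The paper's version only ever needs one nonvanishing section. You also explicitly check that $\widetilde F$ is $k$-Mahler, so that $\md_k\widetilde F$ is defined, which the paper leaves implicit.
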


\begin{proof}
  \ref{div-sup:eqn} 
  Immediate from \cref{l:mahler-eqn-l-power}.
 
  \smallskip
  \ref{div-sup:denom}
  First note that if 
  \[
  Q_0(x) \widetilde F(x) = \sum_{j=1}^m Q_j(x) \widetilde F(x^{k^j})
  \]
  with $Q_0 \ne 0$ is a $k$-Mahler equation for $\widetilde F$, then substituting $x^l$ for $x$ gives a $k$-Mahler equation for $F$ with $Q_0(x^l)$ as lowest coefficient.
  This implies that $\md_k F$ divides $(\md_k \widetilde F)(x^l)$.

  Applying \cref{l:mahler-eqn-l-power} to a Mahler equation for $F$ in which $P_0=\md_k F$, it follows that $\md_k \widetilde F$ divides $\Delta_r P_0$.
  Now
  \[
  \deg P_0 = \deg \md_k F \,\le\, l \deg \md_k \widetilde F \,\le\, l \frac{\deg P_0}{l} = \deg P_0.
  \]
  Hence, equality holds throughout.
  Since $P_0$ divides $(\md_k \widetilde F)(x^l)$, we get $P_0 = \md_k \widetilde F(x^l)$.
\end{proof}

We will also need the following variant.

\begin{lemma} \label{l:frac-mahler-eqn}
  Let $F \in \power{K}$.
  Suppose that there exist $P_0$, \dots,~$P_n$, $A \in K[x]$, with some $P_i \ne 0$, and $l \ge 1$ such that
  \[
  \sum_{i=0}^n P_i\big(x^{\frac{1}{l}}\big) F(x^{k^i}) = A\big(x^{\frac{1}{l}}\big).
  \]
  Then there exist $Q_0$, \dots,~$Q_n \in K[x]$ with $Q_0 \ne 0$ and $B \in K[x]$ such that
  \[
  \sum_{i=0}^n Q_i(x) F(x^{k^i}) = B(x).
  \]
\end{lemma}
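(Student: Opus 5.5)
We would use the same Cartier-operator idea as in \cref{l:mahler-eqn-l-power}, now applied to fractional exponents. Write $P_i(x^{1/l}) = \sum_{s=0}^{l-1} x^{s/l} P_{i,s}(x)$, where $P_{i,s} \in K[x]$ collects the monomials of $P_i$ whose exponent is congruent to $s$ modulo $l$; explicitly $P_{i,s}(x) = x^{-s/l}\,\cdot$ (the part of $P_i(x^{1/l})$ with exponents in $s/l + \Z$), and more simply $P_{i,s}(x^l)\,x^s$ is that part of $P_i(x)$. Decompose $A(x^{1/l}) = \sum_{s} x^{s/l} A_s(x)$ in the same way.

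All the series $F(x^{k^i})$ lie in $\power{K}$, so their exponents are integers. Hence in the given equation, viewed in the ring of Puiseux series with exponents in $\frac1l \Z_{\ge 0}$, the terms with exponents in $s/l + \Z$ must match separately for each residue class $s$. This gives, for every $0 \le s \le l-1$,
\[
\sum_{i=0}^n P_{i,s}(x) F(x^{k^i}) = A_s(x).
\]
Concretely, one can substitute $x = y^l$: the equation becomes $\sum_i P_i(y) F(y^{lk^i}) = A(y)$ in $K\llbracket y \rrbracket$. Applying $\Delta_s^{(l)}$ and using that $F(y^{lk^i}) = \mathcal M(F(x^{k^i}))$ with $x=y^l$ yields exactly the displayed equations, by the rule $\Delta_s^{(l)}(P(y) G(y^l)) = \Delta_s^{(l)}(P)(y) G(y)$.

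Since some $P_i \ne 0$, we can choose $s$ with $\Delta_s^{(l)}(P_i) \ne 0$. The resulting equation $\sum_i \Delta_s^{(l)}(P_i)(x) F(x^{k^i}) = \Delta_s^{(l)}(A)(x)$ then has polynomial coefficients, not all zero, but its lowest coefficient might vanish. To obtain $Q_0 \ne 0$, apply \cref{l:mahler-eqn-l-power} with its parameter $l$ equal to $1$, i.e.\ with $\widetilde F = F$. That lemma, via the Cartier reduction $\Delta^{(k^i)}_{r}$ for minimal $i$ with nonzero coefficient, produces $Q_0, \dots, Q_n, B \in K[x]$ with $Q_0 \ne 0$ and $\sum_i Q_i(x) F(x^{k^i}) = B(x)$, which is the claim.

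The only point needing care is the legitimacy of the substitution $x = y^l$ and the interpretation of $x^{1/l}$. We take the hypothesis to live in $K\llbracket x^{1/l} \rrbracket$, and the substitution $x\mapsto y^l$ is an injective ring isomorphism from $K\llbracket x^{1/l} \rrbracket$ onto $K\llbracket y \rrbracket$, so nothing is lost. Beyond that the argument is routine bookkeeping.
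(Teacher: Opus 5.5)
Your proof is correct and is essentially the paper's argument. The paper substitutes $x=y^l$ and applies \cref{l:mahler-eqn-l-power} directly to $F(y^l)$ with parameter $l$, so both Cartier reductions happen inside that lemma with the $\Delta^{(k)}$ step first; you instead do the $\Delta_s^{(l)}$ step by hand and then invoke the lemma with $l=1$, which uses the same ingredients in the opposite order.
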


\begin{proof}
  Substitute $x^{1/l}$ for $x$ in \cref{l:mahler-eqn-l-power}.
\end{proof}

\subsection{Mahler denominators under some algebraic operations}
Recall, from the preliminaries, that a polynomial $A \in K[x]$ is $k$-negligible if all its nonzero roots $\alpha \in \algc{K}$ are roots of unity and satisfy $\alpha^{k^i} \ne \alpha$ for all $i \ge 1$.
More generally, we can compare two polynomials as follows.

\begin{definition}
  For $P$,~$Q \in K[x]$, we write
  \[
   P \preceq Q
  \] 
  if there exists a $k$-negligible $A \in K[x]$ and $s \ge 0$ such that $P$ divides
  \[
    A(x) \prod_{i=0}^s Q(x^{k^i}).
  \]
\end{definition}

The definition of $\preceq$ depends on $k$.
Since $k$ remains fixed throughout the paper, we choose not to make this dependency explicit in the notation.
Note that $\preceq$ is a reflexive and transitive relation on $K[x]$, and hence it is a preorder.
If $P \preceq Q$, then also $PR \preceq QR$ for all $R \in K[x]$ and $P(x^l) \preceq Q(x^l)$ for all $l \ge 1$.
We write $P \sim Q$ if $P \preceq Q$ and $Q \preceq P$.
Then $\sim$ is an equivalence relation on $K[x]$.

With this notation in hand, we can state the main result of the present section.

\begin{theorem} \label{t:mahler-denominators}
  Let $K$ be a field of characteristic $0$ and let $k \ge 2$.
  Let $F$,~$G \in \power{K}$ be a $k$-Mahler series and $r \ge 1$.
  Then
  \begin{enumerate}
  \item\label{md:k-powers} $\md_{k^r} F \sim \md_k F$,
  \item\label{md:sum} $\md_{k}(F+G) \preceq \md_k F \cdot \md_k G$,
  \item\label{md:root-of-unity-spec} if $\omega^k=\omega \in K \setminus \{0\}$, then $\md_k(F(\omega x))=(\md_k F)(\omega x)$,
  \item\label{md:root-of-unity} if $\omega \in \ru(K) \setminus \rucommon(K)$ then $\md_k(F(\omega x)) \sim (\md_k F)(\omega x)$,
  \item\label{md:mahler-op} 
  if $H \coloneqq \sum_{i=0}^r P_i(x) F(x^{k^i})$ with $P_i \in K[x]$, then $\md_k H \preceq (\md_k F)^r$.
  \end{enumerate}
\end{theorem}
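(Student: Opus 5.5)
The plan is to prove the five items in the order \ref{md:k-powers}, \ref{md:root-of-unity-spec}, \ref{md:root-of-unity}, \ref{md:mahler-op}, \ref{md:sum}. The easy items come first. The two "module" items, \ref{md:mahler-op} and \ref{md:sum}, share one elimination mechanism, which is where \cref{t:abs-regular} will have to be used.

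\emph{Item \ref{md:k-powers}.} A $k^r$-Mahler equation is in particular a $k$-Mahler equation with the same $P_0$. Hence $\md_k F \mid \md_{k^r} F$, which gives $\md_k F \preceq \md_{k^r} F$. For the converse, start from an equation with $P_0=\md_k F$. Write $d=\md_k F$, and for $1 \le j \le n$ express $F(x^{k^j})$ through the equation applied at $x^{k^j}$, i.e.\ with denominator $d(x^{k^j})$. Iterating, every $F(x^{k^j})$ with $j \not\equiv 0 \pmod r$ can be pushed into the $K[x]$-span of higher iterates. Clearing denominators then gives a relation in which $F$ has coefficient dividing $\prod_{i=0}^{s} d(x^{k^i})$ up to a polynomial factor.

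To extract a genuine $k^r$-equation, I would pass to $V$, the finite-dimensional $K(x)$-span of the $F(x^{k^j})$. Among the $F(x^{k^{rm}})$ there is a linear dependence. One then needs to see that the coefficient of $F$ in it can be chosen to divide $A\prod_i d(x^{k^i})$ with $A$ negligible. This is exactly the same difficulty as in \ref{md:sum}, so I treat it there.

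\emph{Item \ref{md:root-of-unity-spec}.} If $\omega^k=\omega$, then $(\omega x)^{k^j}=\omega x^{k^j}$. Substituting $x\mapsto \omega x$ therefore maps Mahler equations for $F$ bijectively to Mahler equations for $F(\omega x)$, sending $P_0(x)$ to $P_0(\omega x)$. The ideals $I_F$ and $I_{F(\omega x)}$ thus correspond, and the normalized generators match.

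\emph{Item \ref{md:root-of-unity}.} If $\omega \in \ru(K)\setminus\rucommon(K)$, its order is coprime to $k$, so $\omega^{k^t}=\omega$ for some $t\ge1$. Apply \ref{md:root-of-unity-spec} with base $k^t$, and then \ref{md:k-powers} twice, together with the observation that $\sim$ is preserved under $x\mapsto\omega x$. The last point holds because negligible roots are only multiplied by $\omega^{-1}$, and $\omega^{-1}$ has order coprime to $k$, so negligibility is preserved.

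\emph{Items \ref{md:sum} and \ref{md:mahler-op}.} Write $d_F=\md_k F$ and $d_G=\md_k G$, and let
\[
D_s=\prod_{i=0}^{s} d_F(x^{k^i})\,d_G(x^{k^i}).
\]
By repeatedly substituting the defining equations, $F+G$ lies in the $K[x][1/D_s]$-module generated by $F(x^{k^j})$ and $G(x^{k^j})$ for $j$ in a fixed window $[m,m+N)$, and so do all its iterates $(F+G)(x^{k^i})$. Comparing the module generated by the iterates with this ambient module, linear algebra over the localization $K[x][1/D_s]$ yields a Mahler equation for $F+G$ with $P_0$ dividing $D_s\cdot\Delta$. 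Here $\Delta$ is a determinant-type polynomial produced by the elimination. For \ref{md:mahler-op} the argument is the same with $H=\sum_i P_iF(x^{k^i})$; only $F$ is involved, and about $r$ consecutive factors $d_F(x^{k^i})$ appear.

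The main obstacle is showing that the extra factor $\Delta$ is harmless, i.e.\ that its non-negligible part is $\preceq D_s$. My first attempt uses \cref{t:abs-regular} locally. Suppose a non-negligible root $\alpha$ of $\md_k(F+G)$ did not come from $d_F d_G$, i.e.\ was not of the form $\beta^{1/k^i}$ for a root $\beta$ of $d_Fd_G$. Then I would twist and rescale the series, using \cref{l:divide-support} and \ref{md:root-of-unity}, to isolate that root. This should produce a $k$-regular series, since $F$ and $G$ become regular away from their denominators, whose denominator nevertheless contains $\alpha$. That would contradict \cref{t:abs-regular}. I expect this localization step to be the delicate part of the whole proof.
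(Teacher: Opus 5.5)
\textbf{The main gap.} Your arguments for (3) and for the easy half of (1) agree with the paper. The hard half of (1), and items (2) and (5), are not actually proved. Elimination over $K[x][1/D_s]$ cannot control the extra factor $\Delta$; recall that the Mahler denominator need not even be the leading coefficient of a minimal equation. Your plan to ``twist and rescale to isolate a root'' does not construct the regular series it needs.

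The missing idea is an infinite product that makes ``$F$ is regular away from its denominator'' literal. Write $\md_kF=x^aP_0$ and $\md_kG=x^bQ_0$ with $P_0(0)=Q_0(0)=1$, and put $D_F=\prod_{j\ge0}P_0(x^{k^j})\in\power{K}$ (similarly $D_G$). Multiplying $x^aP_0F=\sum_iP_iF(x^{k^i})$ by $\prod_{j\ge1}P_0(x^{k^j})$ gives an equation for $D_FF$ with leading coefficient $x^a$. Hence $D_FF$ is $k$-regular by \cref{t:abs-regular}.

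Regular series are closed under sums and under passing from $k$ to $k^r$. So $H=D_FD_G(F+G)$, or $D_FF$ viewed in base $k^r$, is regular. By \cref{t:abs-regular} again it satisfies $A\,H=\sum_{i=1}^sR_iH(x^{k^i})$ with $A$ negligible. Every term contains the common tail $\prod_{j\ge s}(P_0Q_0)(x^{k^j})$. Cancelling it leaves an equation for $F+G$ with leading coefficient $A\prod_{j<s}(P_0Q_0)(x^{k^j})$, which is (2); the same computation gives (1). Item (5) then needs no separate elimination: it follows from (2) together with $\md_k(PF)\mid\md_kF$ and $\md_k(F(x^{k^j}))\mid(\md_kF)(x^{k^j})$.

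\textbf{A second problem, in (4).} Your justification that $\sim$ is preserved under $x\mapsto\omega x$ only checks the negligible factor. The factors $Q(x^{k^i})$ become $Q(\omega^{k^i}x^{k^i})$, not $Q(\omega x^{k^i})$. So $P\preceq Q$ does not give $P(\omega x)\preceq Q(\omega x)$ unless $\omega^k=\omega$.

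The paper's last step, $(\md_{k^u}F)(\omega x)\sim(\md_kF)(\omega x)$, makes the same move, and (4) is in fact false as stated. Take $k=2$, $\omega$ a primitive cube root of unity, and $F=\prod_{j\ge0}(1-2x^{2^j})^{-1}$, so that $\md_2F=1-2x$.
\begin{itemize}
  \item The series $G=F(\omega x)$ is meromorphic on the unit disc. It has a simple pole at $\beta=\omega^2/\sqrt2$, a zero of the factor $1-2\omega^2x^2$.
  \item $G$ is holomorphic at $\beta^{2^i}$ for all $i\ge1$. Hence every $2$-Mahler equation for $G$ has $P_0(\beta)=0$.
  \item But $\beta$ is not a root of unity, and $\beta^{2^i}\ne\omega^2/2$ for all $i\ge0$. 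So $\md_2G\not\preceq(\md_2F)(\omega x)$.
\end{itemize}

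What the argument does prove, via (3) in base $k^u$ (where $\omega^{k^u}=\omega$) and the explicit divisibility obtained in (1), is
\[
\md_k(F(\omega x))\preceq\prod_{t=0}^{u-1}(\md_kF)(\omega^{k^t}x).
\]
The later sections only take products over all $\omega^j$ with $q\nmid j$, a set closed under $\zeta\mapsto\zeta^k$, so this weaker bound suffices there.
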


\begin{proof}
  \ref{md:k-powers}
  We have $\md_{k} F \preceq \md_{k^r} F$, because every $k^r$-Mahler equation is also a $k$-Mahler equation.
  It remains to show $\md_{k^r} F \preceq \md_k F$.
  
  First observe that if $P \in K[x]$ is a polynomial with $P(0)=1$, then the formal infinite product
  \[
  \prod_{j=0}^\infty P(x^{k^j}) \in K\llbracket x \rrbracket
  \]
  is well-defined.
  Let $(\md_k F)(x) = x^a P_0(x)$ with $P_0 \in K[x]$ such that $P_0(0)=1$ and $a \ge 0$.
  Define $D_F \coloneqq \prod_{j=0}^\infty P_0(x^{k^j})$, and let
  \[
  x^a P_0(x) F(x) = \sum_{i=1}^t P_i(x) F(x^{k^i})
  \]
  with $P_i \in K[x]$ be a $k$-Mahler equation for $F$.
  Multiplying the equation by $\prod_{j=1}^\infty P_0(x^{k^j})$, we get 
  \[
  x^a (D_F F)(x) = \sum_{i=1}^t P_i(x) P_0(x^k) \cdots P_0(x^{k^{i-1}}) (D_F F)(x^{k^i}).
  \]
  Thus, the series $D_F F$ is $k$-Mahler with $\md_k(D_F F)$ dividing $x^a$.
  \cref{t:abs-regular} implies that $D_F F$ is $k$-regular.
  Consequently, it is also $k^r$-regular.
  Applying \cref{t:abs-regular} again, we have $\md_{k^r}(D_F F) = A_1$ with $A_1 \in K[x]$ being $k^r$-negligible, which is the same as it being $k$-negligible.
  Let $s \ge 1$ and $Q_1$, \dots,~$Q_s \in K[x]$ be such that
  \[
  A_1(x) (D_F F)(x) = \sum_{i=1}^s Q_i(x) (D_F F)(x^{k^{ri}}).
  \]
  Expanding the infinite product $D_F(x)$ yields
  \[
  A_1(x) \prod_{j=0}^\infty P_0(x^{k^j}) F(x) = \sum_{i=1}^s Q_i(x) F(x^{k^{ri}}) \prod_{j=0}^\infty P_0(x^{k^{ri+j}}) 
  \]
  We can cancel $\prod_{j=rs}^\infty P_0(x^{k^j})$ from each term, and so
  \[
  A_1(x) \prod_{j=0}^{rs-1} P_0(x^{k^{j}}) F(x) = \sum_{i=1}^s \widetilde Q_i(x) F(x^{k^{ri}})
  \]
  with some $\widetilde Q_i \in K[x]$.
  We conclude that $\md_{k^r} F$ divides $A_1(x) \prod_{j=0}^{rs-1} P_0(x^{k^{j}})$, and the claim follows because $(\md_k F)(x) = x^a P_0(x)$.

  \smallskip
  \ref{md:sum}
  Let $P_0$ and $D_F$ be as in \ref{md:k-powers} and similarly define $(\md_k G)(x) = x^b Q_0(x)$ and $D_G = \prod_{j=0}^\infty Q_0(x^{k^j})$.
  Then
  \[
  x^a P_0(x) F(x) = \sum_{i=1}^t P_i(x) F(x^{k^i}) \qquad\text{and}\qquad 
  x^b Q_0(x) G(x) = \sum_{i=1}^t Q_i(x) G(x^{k^i}),
  \]
  with suitable $P_i$, $Q_i \in K[x]$.
  Multiplying these equations by $D_F D_G / P_0$, respectively, by $D_F D_G / Q_0$, and applying \cref{t:abs-regular}, we see that $D_F D_G F$ and $D_F D_G G$ are both $k$-regular.
  Since sums of $k$-regular series are $k$-regular, also $D_F D_G (F+G)$ is $k$-regular.
  Applying once again \cref{t:abs-regular}, therefore $\md_k(D_F D_G(F+G)) = A_2$ with $A_2 \in K[x]$ being $k$-negligible.
  
  We now proceed as in \ref{md:k-powers}: let $H=D_F D_G (F+G)$ and let $R_i \in K[x]$ be such that $A_2(x)  H(x) = \sum_{i=1}^s R_i(x) H(x^{k^i})$.
  Expanding $D_F D_G$ and cancelling common factors, we find
  \[
  A_2(x) \prod_{j=0}^{s-1} (P_0 Q_0)(x^{k^j}) \cdot (F+G)(x) = \sum_{i=1}^s \widetilde R_i(x)\cdot (F+G)(x^{k^i}),
  \]
  so that $\md_k(F+G) \mid A_2(x) \prod_{j=0}^{s-1} (P_0 Q_0)(x^{k^j})$.

  \smallskip
  \ref{md:root-of-unity-spec}
  If $P_0(x) F(x) = \sum_{i=1}^n P_i(x) F(x^{k^{i}})$ with $P_0$, \dots,~$P_n \in K[x]$ and $P_0 \ne 0$, then
  \[
    P_0(\omega x) F(\omega x) = \sum_{i=1}^n P_i(\omega x) F((\omega x)^{k^{i}}) = \sum_{i=1}^n P_i(\omega x) F(\omega x^{k^i})
  \]
  is a $k$-Mahler equation for $F(\omega x)$.
  We see that $\md_{k}(F(\omega x))$ divides $(\md_{k} F)(\omega x)$.

  The converse divisibility follows by applying the statement to $F(x) = F(\omega^{-1}(\omega x))$.

  \smallskip
  \ref{md:root-of-unity}
  Let $q \in \N$ be such that $\omega^q = 1$.
  Since $\gcd(k,q)=1$, there exists $u \ge 1$ such that $k^u \equiv 1 \mod q$.
  Then $\omega^{k^u} = \omega$.
  Using \ref{md:k-powers} and \ref{md:root-of-unity-spec}, we get
  \[
  \md_k(F(\omega x)) \sim \md_{k^u}(F(\omega x)) = (\md_{k^u} F)(\omega x) \sim (\md_k F)(\omega x).
  \]

  \smallskip
  \ref{md:mahler-op}
  First note that if $P \in K[x]$ is any polynomial, then $\md_k(P F)$ divides $\md_k F$ and similarly $\md_k(F(x^{k^j}))$ divides $(\md_k F)(x^{k^j})$ for all $j \ge 1$: this follows from a $k$-Mahler equation $P_0(x) F(x) = \sum_{i=1}^n P_i(x) F(x^{k^i})$ with $P_i \in K[x]$ and $P_0=\md_k F$ by multiplying by $P$ and by substituting $x^{k^i}$ for $x$, respectively.

  These two observations, together with \ref{md:sum}, imply
  \[
  \md_k(H) \preceq (\md_k F)(x) \cdot (\md_k F)(x^k) \cdots (\md_k F)(x^{k^r}) \preceq (\md_k F)^r. \qedhere
  \]
\end{proof}

\subsection{Mahler denominators of rational functions}
As a final result in this section, we show that roots of unity with order coprime to $k$ never appear as roots of the Mahler denominator of a rational function.

\begin{proposition} \label{p:md-for-rational}
  If $F \in \power{K}$ is rational, then $\md_k F$ has no roots in $\ru(\algc{K}) \setminus \rucommon(\algc{K})$.
\end{proposition}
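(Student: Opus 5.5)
We will build an explicit $k$-Mahler equation for $F$ whose leading coefficient $P_0$ has no roots in $\ru(\algc{K}) \setminus \rucommon(\algc{K})$. Since $\md_k F$ divides $P_0$, this proves the claim. We may work over $\algc{K}$, because $\md_k F$ computed over $\algc{K}$ divides $\md_k F$ computed over $K$. Indeed, the ideal $I_F$ only grows when we extend scalars, and in fact the two generators agree.

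Write $F = P/Q$ with $\gcd(P,Q)=1$ and $Q(0)\neq 0$. \Cref{exm:rational-is-mahler} gives the equation $P(x^k)Q(x)F(x) = P(x)Q(x^k)F(x^k)$. Its leading coefficient $P(x^k)Q(x)$ may have bad roots. Those coming from $P(x^k)$ can be removed directly: $F$ also satisfies the inhomogeneous equation $Q(x)F(x)=P(x)$. An inhomogeneous equation $Q F = P$ can be homogenized as
\[
P(x^k)Q(x)F(x) - P(x)Q(x^k)F(x^k)=0.
\]
The leading coefficient is then $P(x^k)Q(x)$ unless $P=0$, and the case $P=0$ is trivial. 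Alternatively, one can use the denominator bound $\md_k F \mid \gcd$ over different homogenizations. A cleaner route is the following. The ideal $I_F$ contains $P(x^k)Q(x)$. It also contains $P(x^{k^2})Q(x)$, from $Q(x)F(x) = P(x)$ and $Q(x^{k^2})F(x^{k^2}) = P(x^{k^2})$, which give
\[
P(x^{k^2})Q(x)F(x) = P(x)Q(x^{k^2})F(x^{k^2}).
\]
More generally it contains $P(x^{k^j})Q(x)$ for all $j$. So $\md_k F$ divides $Q(x)\gcd_j P(x^{k^j})$.

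The crux is to eliminate roots $\zeta\in\ru\setminus\rucommon$ of $Q$ itself, and of $\gcd_j P(x^{k^j})$. For the gcd part: if $\zeta$ is a root of $P(x^{k^j})$ for all $j$, then $\zeta^{k^j}$ is a root of $P$ for every $j$. Since $\zeta$ has order coprime to $k$, the values $\zeta^{k^j}$ cycle through a finite orbit. That gives no contradiction yet, so I would instead combine several equations. Let $D(x)=\prod_{\zeta}(x-\zeta)^{m_\zeta}$ be the part of $Q$ whose roots lie in $\ru\setminus\rucommon$. Choose $u$ with $\zeta^{k^u}=\zeta$ for all these $\zeta$; this is possible since there are finitely many and all have order coprime to $k$. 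Then the orbit $\{\zeta^{k^{j}}\}$ is permuted by $x\mapsto x^{k^u}$. Hence $D(x)$ divides $D(x^{k^u})$ up to multiplicity issues; more precisely, $D(x)\mid \widetilde D(x^{k^u})$ where $\widetilde D$ is a suitable power of $D$. So I would write $Q=DQ'$ and set $F=P/(DQ')$. Multiplying through, $Q'(x)F(x) = P(x)/D(x)$.

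The key identity is
\[
\frac{P(x)}{D(x)} = \frac{P(x)\,E(x)}{D(x^{k^u})^{N}},
\]
where $E(x)=D(x^{k^u})^N/D(x)$ is a polynomial for $N$ large. Now $D(x^{k^u})^N F(x^{k^u})$ is a polynomial combination expressed through $F$'s own equation at $x^{k^u}$. This gives
\[
Q'(x)D(x^{k^u})^N\,\cdots\, F(x) = (\text{poly})\,F(x^{k^u}) + (\text{poly}).
\]
I would iterate this to obtain an inhomogeneous $k^u$-Mahler equation whose leading coefficient involves only $Q'$ and $k$-powers of $P$, $Q'$, and no factor vanishing at $\zeta$. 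After homogenizing, I would then apply \cref{t:mahler-denominators}\ref{md:k-powers}, since $\md_k F\sim\md_{k^u}F$ up to negligible factors and $k$-twists; these do not create roots in $\ru\setminus\rucommon$ unless such roots were already present. I expect this bookkeeping to be the main obstacle. The homogenization factor $A(x^{k^u})$ may reintroduce roots, so the inhomogeneous term has to be chosen to avoid $\zeta$. I would handle this by choosing among several $j$ and using the gcd over the resulting family of leading coefficients, exploiting that $P$ and $Q$ are coprime.
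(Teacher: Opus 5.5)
\textbf{There is a genuine gap.} Your proposal never actually produces a $k$-Mahler equation whose leading coefficient avoids the bad roots, and the mechanisms you suggest cannot do so as sketched.

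Taking gcds over the family $P(x^{k^j})Q(x)$, or over ``several $j$'' after homogenizing, removes nothing relevant. Every member contains the factor $Q(x)$, and a root $\zeta$ of $P$ with $\zeta^k=\zeta$ is a root of every $P(x^{k^j})$. For example, for $F=1/(1-x)$ all these coefficients vanish at $1\in\ru(\algc{K})\setminus\rucommon(\algc{K})$. Yet $F(x)=(1+x+\cdots+x^{k-1})F(x^k)$ shows $\md_k F=1$.

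Your ``key identity'' has the same defect. It gives $Q'(x)D(x^{k^u})^N F(x)=P(x)E(x)$, whose leading coefficient still vanishes at $\zeta$ because $D(\zeta^{k^u})=D(\zeta)=0$. The iteration and homogenization that are supposed to fix this are not carried out, and you note yourself that homogenizing may reintroduce $\zeta$.

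\textbf{The missing idea} is to cancel a common factor between the two coefficients of a \emph{single} homogeneous equation. If $\alpha^{k^n}=\alpha$, then for every polynomial $R$ the point $\alpha$ has exactly the same multiplicity as a root of $R(x)$ and of $R(x^{k^n})$ (\cref{l:kn-subst-multiplicity}). This is the precise form of your observation $D(x)\mid D(x^{k^u})$, and no power of $D$ is needed. Hence in
\[
P(x^{k^n})Q(x)F(x)=P(x)Q(x^{k^n})F(x^{k^n})
\]
the point $\alpha$ has equal multiplicity in both coefficients. Dividing the equation by the gcd of the two coefficients (allowed because $\power{K}$ is a domain) yields a $k$-Mahler equation whose $P_0$ does not vanish at $\alpha$. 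Thus $(\md_k F)(\alpha)\neq 0$. This handles roots of $P$ and of $Q$ at once, with no inhomogeneous equations, no change of base to $k^u$, and no extension of scalars.

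Separately, your justification for passing to $\algc{K}$ runs the wrong way. The denominator over $\algc{K}$ dividing the one over $K$ does not exclude extra roots of the latter. This does not matter here, since the gcd above can be taken in $K[x]$.
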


In particular, if $F$ is rational and all nonzero roots of $\md_k F$ are roots of unity, then $\md_k F$ is already negligible.
For the proof, we need the following lemma.

\begin{lemma} \label{l:kn-subst-multiplicity}
  If $P \in K[x]$ is a polynomial and $\alpha \in \algc{K}$ with $\alpha^{k^n}=\alpha$ for some $n \ge 1$, then the multiplicity of $\alpha$ as a root of $P$ equals the multiplicity of $\alpha$ as a root of $P(x^{k^n})$.
\end{lemma}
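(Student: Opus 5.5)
The statement to prove is \cref{l:kn-subst-multiplicity}: if $\alpha^{k^n}=\alpha$, then $\alpha$ has the same multiplicity as a root of $P$ and of $P(x^{k^n})$. The plan is to factor $P$ over $\algc{K}$ and treat each linear factor separately. Multiplicities are additive under products, so the claim follows once we know, for each factor $(x-\beta)$ of $P$, the multiplicity of $\alpha$ as a root of $x^{k^n}-\beta$. We also treat the constant factor and the case $P=0$: for $P=0$ both multiplicities are infinite, or the statement is vacuous, so we may assume $P\neq 0$. Write $P(x)=c\prod_j (x-\beta_j)$, so that $P(x^{k^n})=c\prod_j (x^{k^n}-\beta_j)$.

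First, the case $\alpha=0$. Here $0$ is a root of $x^{k^n}-\beta$ exactly when $\beta=0$, with multiplicity $k^n$. This breaks the naive count, but an alternative handles it: the multiplicity of $0$ in $P$ is the lowest exponent $a$ with nonzero coefficient, and in $P(x^{k^n})$ it is $ak^n$. So the two multiplicities agree only when $a=0$, and the lemma as literally stated presumably intends $\alpha\neq 0$. Indeed, for roots of unity $\alpha^{k^n}=\alpha$ forces $\alpha\neq0$ in the intended application to \cref{p:md-for-rational}. I will remark on this and assume $\alpha\ne 0$.

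Now assume $\alpha\neq 0$, so $\alpha$ is a root of unity of order coprime to $k$. Consider a single factor $x^{k^n}-\beta$.
\begin{itemize}
\item If $\beta\neq\alpha$, then $\alpha^{k^n}=\alpha\ne\beta$, so $\alpha$ is not a root of this factor.
\item If $\beta=\alpha$, then $\alpha$ is a root of $x^{k^n}-\alpha$, and it is simple. The derivative is $k^n x^{k^n-1}$, which does not vanish at $\alpha\neq0$ because $\operatorname{char}K=0$.
\end{itemize}
Hence each factor $(x-\beta_j)$ of $P$ contributes to the multiplicity of $\alpha$ in $P(x^{k^n})$ exactly $1$ if $\beta_j=\alpha$ and $0$ otherwise. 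This is precisely its contribution to the multiplicity of $\alpha$ in $P$. Summing over $j$ gives the claim.

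There is no real obstacle here. The only care needed is the simplicity of the root via the derivative, which uses characteristic $0$ and $\alpha\neq 0$, and the degenerate case $\alpha=0$ discussed above.
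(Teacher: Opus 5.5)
Your proof is correct and is essentially the paper's argument: the paper writes $P=(x-\alpha)^eP_0$ with $P_0(\alpha)\neq 0$, notes $P_0(\alpha^{k^n})=P_0(\alpha)\neq 0$, and then uses that $\alpha$ is a simple root of $x^{k^n}-\alpha=\prod_{i}(x-\zeta^i\alpha)$. Your factor-by-factor count, with the derivative check, establishes exactly the same facts. Your caveat about $\alpha=0$ is also right. The statement fails there (take $P=x$), and the paper's factorization tacitly needs $\alpha\neq 0$ so that the $\zeta^i\alpha$ are distinct; this is harmless, since in \cref{p:md-for-rational} the root $\alpha$ is a root of unity.
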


\begin{proof}
  Let $P(x) = (x-\alpha)^e P_0(x) \in \algc{K}[x]$ with $P_0(\alpha) \ne 0$.
  Then $P(x^{k^n}) = (x^{k^n}-\alpha)^e P_0(x^{k^n})$ and $P_0(\alpha^{k^n}) \ne 0$ since $\alpha=\alpha^{k^n}$.
  Factoring $x^{k^n} - \alpha = \prod_{i=0}^{k^n - 1} (x - \zeta^i \alpha)$ with $\zeta \in \algc{K}$ a $k^n$-th primitive root of unity, the claim follows.
\end{proof}

\begin{proof}[Proof of \cref{p:md-for-rational}]
  Let $F=P/Q$ with coprime $P$,~$Q \in K[x]$ and $Q(0) \ne 0$.
  Suppose that $\alpha \in \algc{K}$ is a root of $\md_k F$ with $\alpha^{k^n} = \alpha$ for some $n \ge 1$.

  The equality $Q(x^{k^n}) F(x^{k^n}) = P(x^{k^n})$ gives rise to a non-trivial homogeneous Mahler equation
  \begin{equation} \label{eq:mahler-rational}
  P(x^{k^n}) Q(x) F(x) = P(x) Q(x^{k^n}) F(x^{k^n}).
  \end{equation}
  \Cref{l:kn-subst-multiplicity} shows that the multiplicities of $\alpha$ in $P(x^{k^n})Q(x)$ and in $P(x) Q(x^{k^n})$ coincide.
  Dividing \eqref{eq:mahler-rational} by $\gcd(P(x^{k^n}) Q(x), P(x) Q(x^{k^n}))$ 
  yields a Mahler equation for $F$ in which $\alpha$ is not a root of the constant coefficient.
  We conclude $(\md_k F)(\alpha) \ne 0$.
\end{proof}

\section{Multiplicative Mahler series with non-completely multiplicative primes}  \label{sec:mult-mahler-non-compl-mult}

The goal of this section is to prove the following proposition, that deals with the first of the two cases of Mahler series with multiplicative coefficients.

\begin{proposition} \label{p:bad-q-mean-regular}
  Let $F = \sum_{n=1}^\infty f(n) x^n \in K\llbracket x \rrbracket$ be a $k$-Mahler series.
  If $f$ is multiplicative and there exist arbitrarily large primes such that $f(qn) \ne f(q)f(n)$ for some $n \ge 2$, then $F$ is $k$-regular.
\end{proposition}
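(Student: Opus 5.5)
The plan is to isolate, for a suitable large prime $q$, the defect of complete multiplicativity at $q$ as a Mahler series supported on a very sparse set of exponents, and then play this sparseness off against the Mahler denominator bounds of \cref{t:mahler-denominators}.

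\textbf{Step 1: the defect series.} Fix a prime $q$ with $q \nmid k$, larger than the degree of $\md_k F$, and such that $f(qn_0) \ne f(q)f(n_0)$ for some $n_0 \ge 2$; the hypothesis gives infinitely many such $q$. Set
\[
H_q(x) \coloneqq \sum_{n \ge 1} \big(f(qn) - f(q)f(n)\big) x^{qn}
 = \frac{1}{q}\sum_{\zeta \in \mu_q(\algc{K})} F(\zeta x) \;-\; f(q) F(x^q).
\]
Since $f(qn) = f(q)f(n)$ whenever $q \nmid n$, the series $H_q$ is supported on exponents divisible by $q^2$, and $H_q \ne 0$. Work over $K(\mu_q)$ and descend at the end via \cref{l:regular-descent-field}.

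\textbf{Step 2: bounding $\md_k H_q$.} By \cref{t:mahler-denominators}\ref{md:root-of-unity}, each $\zeta \in \mu_q$ lies outside $\rucommon$, so $\md_k(F(\zeta x)) \sim (\md_k F)(\zeta x)$. By \cref{l:divide-support}\ref{div-sup:denom}, $\md_k(F(x^q)) = (\md_k F)(x^q)$. Then \cref{t:mahler-denominators}\ref{md:sum} gives
\[
\md_k H_q \preceq D_q(x) \coloneqq (\md_k F)(x^q) \prod_{\zeta \in \mu_q} (\md_k F)(\zeta x).
\]
On the other hand, \cref{l:divide-support}\ref{div-sup:denom} applied with $l = q^2$ shows that $\md_k H_q$ is a polynomial in $x^{q^2}$. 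Consequently its non-negligible roots form full orbits under multiplication by $\mu_{q^2}$, and the same holds for the set of $q^2$-th powers of these roots.

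\textbf{Step 3: $H_q$ is $k$-regular (main obstacle).} I expect this step to be the hardest. Suppose $\alpha$ is a non-negligible root of $\md_k H_q$. Unwinding $\preceq$, every $\zeta'\alpha$ with $\zeta' \in \mu_{q^2}$ satisfies one of two conditions: either $(\zeta'\alpha)^{k^j}$ lies in $\mu_q \cdot \{\text{roots of } \md_k F\}$, or $(\zeta'\alpha)^{qk^j}$ is a root of $\md_k F$, for some $j \le s$.

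Since $\gcd(k,q) = 1$, raising to the power $k^j$ permutes $\mu_{q^2}$. Hence for each fixed $j$ the $q^2$ values $(\zeta'\alpha)^{k^j}$ fall into at least $q$ distinct cosets of $\mu_q$. Only $\deg \md_k F < q$ cosets can hit a root of $\md_k F$.

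The difficulty is that $j$ is not bounded independently of $q$. To deal with this, I would first exploit that the exponents $j$ can be chosen within an orbit of $x \mapsto x^k$, using the finer divisibility $\md_k H_q \mid A\prod_{j<s} D_q(x^{k^j})$ from the proof of \ref{md:sum}. I would then apply pigeonhole to the finitely many roots of $\md_k F$, whose $k$-power orbits are controlled. The aim is to conclude that no such $\alpha$ exists, so that $\md_k H_q$ is negligible and $H_q$ is $k$-regular by \cref{t:abs-regular}.

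\textbf{Step 4: back to $F$.} Rearranging Step 1,
\[
f(q)F(x^q) = \frac{1}{q}\sum_{\zeta} F(\zeta x) - H_q.
\]
Because $H_q$ is regular, this yields $(\md_k F)(x^q) \preceq \prod_\zeta (\md_k F)(\zeta x)$, provided $f(q) \ne 0$. If $f(q) = 0$, use $H_q$ directly, which then equals $\frac1q\sum_\zeta F(\zeta x)$.

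Now let $\beta$ be a non-negligible root of $\md_k F$. Its $q$ distinct $q$-th roots are roots of the left-hand side. Each must be, up to $\mu_q$ and $k$-power iteration, a root of $\md_k F$. For $q$ large compared with $\deg \md_k F$ and with the orders of the relevant roots of unity, this is impossible by a counting argument of the same kind as in Step 3. Hence $\md_k F$ is negligible, and $F$ is $k$-regular by \cref{t:abs-regular}.
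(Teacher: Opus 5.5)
Your Steps 1--2 match the paper: your $H_q$ is $G_q/q$ from \cref{l:Gq_regular}, with the bound \eqref{eq:Gq-md}. However, Step 4 fails, and Step 3 is left open.

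\textbf{Step 4.} The relation $(\md_k F)(x^q)\preceq\prod_{\zeta\in\mu_q}(\md_k F)(\zeta x)$ cannot exclude non-negligible roots that are roots of unity of order coprime to $k$. For $P=1-x$, whose root $1$ is not in $\rucommon(\algc{K})$, one has $\prod_{\zeta\in\mu_q}P(\zeta x)=1-x^q=P(x^q)$ for every $q$. Your count over the $q$ distinct $q$-th roots $\gamma\eta$ ($\eta\in\mu_q$) of $\beta$ is vacuous: $\gamma\eta$ is a root of $\prod_{\zeta\in\mu_q}P(\zeta x^{k^j})$ for one $\eta$ if and only if it is for all $\eta$, because the twist by $\mu_q$ absorbs exactly that orbit.

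Note also that you never use that $f$ fails to be completely multiplicative at $q$. With $H_q=0$ the same identity holds in the setting of \cref{sec:mult-mahler-completely-multiplicative}. There the analogous denominator argument (\cref{l:close-to-regular}) only gives roots in $\ru(\algc{K})\cup\{0\}$, and the paper needs the operator-ring machinery to remove the rest.

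The missing idea is to use $c\coloneqq f(q^i)-f(q)f(q^{i-1})\ne 0$ for some $i\ge 2$. The coefficient of $H_q$ at $x^{q^i m}$ with $q\nmid m$ is $c f(m)$. So by \cref{l:regular-arith-progr}, applied along $q^i(qm+1)$, the sequence equal to $f(n)$ for $n\equiv 1\pmod q$ and $0$ otherwise is $k$-regular, i.e.\ $\sum_{j=0}^{q-1}\omega^{-j}F(\omega^j x)$ is regular. This writes $F$ as a regular series plus a combination of only the \emph{nontrivial} twists $F(\omega^j x)$, $1\le j\le q-1$. Hence $\md_k F\preceq\prod_{j=1}^{q-1}(\md_k F)(\omega^j x)$.

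Now \cref{l:coprime} gives coprimality of $\md_k F$ and $(\md_k F)(\omega^j x^{k^i})$ away from $0$. A common root $\beta\ne 0$ would make $\beta$ and $\omega^j\beta^{k^i}$ both roots of $\md_k F$, hence $\omega^j\in L$, a nontrivial $q$-th root of unity, contradicting $\mu_q(L)=\{1\}$. This forces every nonzero root of $\md_k F$ into $\rucommon(\algc{K})$, including roots of unity.

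\textbf{Step 3.} As you note, counting against $\deg\md_k F$ cannot control the unbounded exponents $k^j$. The relevant hypothesis is not $q>\deg\md_k F$ but $\mu_q(L)=\{1\}$, with $q$ odd and coprime to $k$, where $L$ is generated by the roots of $\md_k F$. This holds for all large primes, since $L$ is finitely generated and so contains only finitely many roots of unity.

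With this hypothesis, it suffices to look at $\alpha,\zeta\alpha,\zeta^2\alpha$ with $\zeta$ a primitive $q^2$-th root of unity (\cref{l:q-roots}). By pigeonhole, two of them satisfy the same one of your two conditions, say with exponents $k^{i}$ and $k^{i'}$, $i\le i'$. If $\gamma,\gamma'\in L$ are the corresponding roots of $\md_k F$, then $\gamma'/\gamma^{k^{i'-i}}$ is a root of unity of order $q$ or $q^2$ lying in $L$. This is a contradiction regardless of the sizes of $i$ and $i'$.
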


Note that the assumption is equivalent to the existence of arbitrarily large primes $q$ such that $f(q^i) \ne f(q)f(q^{i-1})$ for some $i \ge 2$.

We start with two technical lemmas on divisibility of polynomials.
Suppose that $P \in K[x]$ and $L$ is the field obtained from the prime field of $K$ by adjoining all roots of $P$.
Then $L$ also contains all coefficients of $P$, and it can therefore be considered to be the splitting field of $P$ over the prime field of $K$.
Since $L$ is a finitely generated field, we see that $\mu(L)$ is finite.
In particular, there exist infinitely many $q \in \N$ satisfying the conditions in the following lemmas.

\begin{lemma} \label{l:q-roots}
  Let $Q$, $P \in K[x]$ with $Q \ne \lambda x^n$ for all $\lambda \in K$, $n \ge 0$.
  Let $L \subseteq \algc{K}$ be the field obtained by adjoining all roots of $P$ to the prime field of $K$.
  Suppose $q \in \N$ is such that $\gcd(2k,q)=1$ and that $\mu_q^*(L) =  \emptyset$.
  If $\omega \in \mu_q^*(\algc{K})$, then
  \[
  Q(x^{q^2}) \nmid \prod_{i=0}^e \prod_{j=0}^{q-1}  P(x^{qk^i}) P(\omega^j x^{k^i}),
  \]
  for all $e \ge 0$.
\end{lemma}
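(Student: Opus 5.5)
The plan is to exhibit a single root of $Q(x^{q^2})$ that is not a root of the right-hand side; this rules out divisibility. Since $Q$ is not of the form $\lambda x^n$, it has a root $\beta \in \algc{K}$ with $\beta \ne 0$. Fix $\gamma_0$ with $\gamma_0^{q^2} = \beta$. The roots of $Q(x^{q^2})$ then include the whole coset $\gamma_0 \mu_{q^2}(\algc{K})$, consisting of $q^2$ elements, all nonzero. Let $Z$ be the set of roots of $P$ in $\algc{K}$, so that $Z \subseteq L$. We need $\gamma \in \gamma_0\mu_{q^2}$ such that, for every $0 \le i \le e$ and every $0 \le j \le q-1$:
\begin{itemize}
\item $\gamma^{qk^i} \notin Z$, and
\item $\omega^j \gamma^{k^i} \notin Z$.
\end{itemize}

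First rewrite both conditions in a uniform way. If $\omega^j\gamma^{k^i} = \alpha \in Z$, then raising to the $q$-th power gives $\gamma^{qk^i} = \alpha^q$. Hence, if $\gamma$ is a root of the right-hand side, then
\[
\gamma^{qk^i} \in Z \cup Z^q \subseteq L \qquad\text{for some } 0 \le i \le e,
\]
where $Z^q = \{\alpha^q : \alpha \in Z\}$. Write $\gamma = \gamma_0\xi$ with $\xi \in \mu_{q^2}$, put $c \coloneqq \gamma_0^q$ and $\eta \coloneqq \xi^q \in \mu_q$. Then $\gamma^{qk^i} = (c\eta)^{k^i}$. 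As $\xi$ runs over $\mu_{q^2}$, the element $\eta$ runs over all of $\mu_q$. It therefore suffices to find $\eta \in \mu_q(\algc{K})$ such that $(c\eta)^{k^i} \notin L$ for all $0 \le i \le e$.

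The key step is a coset argument. Let $H \coloneqq \mu_q(L)$, a subgroup of the cyclic group $\mu_q(\algc{K})$. Since $\mu_q^*(L) = \emptyset$, $H$ is a proper subgroup. Call $\eta$ \emph{bad} if $(c\eta)^{k^i} \in L$ for some $i \le e$. Suppose $\eta$ and $\eta'$ are both bad, witnessed by exponents $i$ and $j$ respectively, and say $i \le j$. Then
\[
(c\eta)^{k^j} = \big((c\eta)^{k^i}\big)^{k^{j-i}} \in L,
\]
and also $(c\eta')^{k^j} \in L$. Dividing, we get $(\eta/\eta')^{k^j} \in L \cap \mu_q = H$. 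Because $\gcd(k,q)=1$, the map $\zeta \mapsto \zeta^{k^j}$ is an automorphism of $\mu_q$ that preserves its unique subgroup of order $|H|$. Hence $\eta/\eta' \in H$. So all bad $\eta$ lie in a single coset of $H$. Since $H$ is proper, some $\eta \in \mu_q$ is not bad, and the corresponding $\gamma$ is a root of $Q(x^{q^2})$ but not of the product. This gives the claimed non-divisibility for every $e$.

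The main obstacle is exactly this step: a naive count of solutions gives a bound that grows with $e$, and so it fails. What makes the argument uniform in $e$ is the monotonicity trick, namely that $(c\eta)^{k^i} \in L$ implies $(c\eta)^{k^j} \in L$ for all $j \ge i$. I will check that the hypotheses $\beta \ne 0$ and characteristic $0$ are what ensure the coset $\gamma_0\mu_{q^2}$ really has $q^2$ elements and that $\eta$ runs over all of $\mu_q$. The hypothesis that $q$ is odd does not appear to be needed for this argument.
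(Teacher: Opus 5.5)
Your proof is correct, but it is organized differently from the paper's. The paper keeps the two kinds of factors apart, $P_1=\prod_i P(x^{qk^i})$ and $P_2=\prod_{i,j}P(\omega^j x^{k^i})$, and applies pigeonhole to the three roots $\alpha,\zeta\alpha,\zeta^2\alpha$ of $Q(x^{q^2})$, with $\zeta$ a primitive $q^2$-th root of unity. Two roots of $P_1$ produce $\zeta^{q(l'-l)k^{i'}}\in L$, a primitive $q$-th root of unity. Two roots of $P_2$ produce $\omega^{j'-jk^{i'-i}}\zeta^{(l'-l)k^{i'}}\in L$, a primitive $q^2$-th root of unity. Both steps need $l'-l\in\{\pm1,\pm2\}$ to be prime to $q$, which is where $\gcd(2,q)=1$ is used.

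You first note that $\omega^j\gamma^{k^i}\in L$ already forces $\gamma^{qk^i}\in L$, which merges the two cases into one. You then show that the bad $\eta\in\mu_q$ lie in a single coset of $H=\mu_q(L)$. This removes the oddness hypothesis, as you observe: for example, $q=4$ with $L=\Q$ is covered by your argument but not by the paper's. The paper's version has the small advantage of naming an explicit good root among three candidates. After your reduction, though, the paper's explicit argument would already work with just the two roots $\alpha,\zeta\alpha$.

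In the application (\cref{l:Gq_regular}) $q$ is a large prime, so the difference is immaterial there. Both arguments tacitly need $P\ne 0$, which holds since $P=\md_k F$ in that application.
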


\begin{proof}
  Suppose, for sake of contradiction, that there exists an $e \ge 0$ such that $Q(x^{q^2})$ divides the right side.
  Let $\alpha \in \algc{K}$ be a nonzero root of $Q(x^{q^2})$.
  Let $\zeta \in \mu_{(q^2)^*}(\algc{K})$.
  Then $\{ \alpha, \zeta \alpha, \dots, \zeta^{q^2-1} \alpha \}$ are roots of $Q(x^{q^2})$, and hence also of the right side.

  Let $P_1(x) \coloneqq \prod_{i=0}^e P(x^{q k^i})$ and $P_2(x) \coloneqq \prod_{i=0}^e \prod_{j=1}^{q-1} P(\omega^j x^{k^i})$.
  By the pigeonhole principle, at least two of $\{ \alpha, \zeta \alpha, \zeta^2 \alpha \}$ are roots of $P_1$ or at least two of them are roots of $P_2$.
  We distinguish two cases.
  
  \smallskip
  \emph{Case 1:} There exist $l \ne l' \in \{0,1,2\}$ such that $\zeta^l \alpha$ and $\zeta^{l'} \alpha$ are roots of $P_1$.
  Then $\gamma\coloneqq \zeta^{lqk^i} \alpha^{qk^i} \in L$ and $\gamma'\coloneqq\zeta^{l'qk^{i'}} \alpha^{qk^{i'}} \in L$, for some $i$, $i' \ge 0$.
  Without restriction $i \le i'$.
  Then also
  \[
  \frac{\gamma'}{\gamma^{k^{i'-i}}} = \zeta^{q(l' - l)k^{i'}} \in L. 
  \]
  Since $l'-l \in \{\pm 1, \pm 2\}$ and $\gcd(2k,q)=1$, we see that this is a $q$-th primitive root of unity, in contradiction to the choice of $q$.

  \smallskip
  \emph{Case 2:} There exist $l \ne l' \in \{0,1,2\}$ such that $\zeta^l \alpha$ and $\zeta^{l'} \alpha$ are roots of $P_2$.
  Then there exist $i$,~$i'$, $j$,~$j'$ such that $\gamma\coloneqq \omega^{j}\zeta^{l k^i}\alpha^{k^i} \in L$ and $\gamma' \coloneqq \omega^{j'}\zeta^{l' k^{i'}} \alpha^{k^{i'}} \in L$.
  Without restriction $i \le i'$.
  Then
  \[
  \frac{\gamma'}{\gamma^{k^{i'-i}}} = \omega^{j' - j k^{i'-i}} \zeta^{(l'-l)k^{i'}} \in L.
  \]
  Again observing $l'-l \in \{\pm 1, \pm 2\}$ and $\gcd(2k,q)=1$, we have that $\zeta^{(l'-l)k^{i'}}$ is a primitive $q^2$-th root of unity, and so is $\gamma' \gamma^{-k^{i'-i}}$ (because $\omega$ is a $q$-th root of unity).
  This contradicts the choice of $q$.
\end{proof}

\begin{lemma} \label{l:coprime}
  Let $P \in K[x]$ and let $L \subseteq \algc{K}$ be the field obtained by adjoining all roots of $P$ to the prime field of $K$.
  Let $q \in \N$ with $\gcd(k,q)=1$ and $\mu_q(L) = \{1\}$.
  If $\omega \in \mu_q^*(\algc{K})$, then, for every $i$, $n \ge 0$ and $j \ge 0$ with $q \nmid j$, the polynomials
  \[
  P(x^{k^i}) \quad\text{and}\quad P(\omega^j x^{n})
  \]
  are coprime, except possibly for common roots at $0$.
\end{lemma}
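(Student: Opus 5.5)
We argue by contradiction on a common nonzero root. Suppose $\beta \in \algc{K}\setminus\{0\}$ is a root of both $P(x^{k^i})$ and $P(\omega^j x^n)$. Then $\gamma \coloneqq \beta^{k^i}$ and $\gamma' \coloneqq \omega^j \beta^n$ are both roots of $P$, hence lie in $L$, and both are nonzero. The idea is to manufacture from $\gamma$ and $\gamma'$ an element of $L$ that is a nontrivial $q$-th root of unity. That would contradict $\mu_q(L)=\{1\}$.

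Concretely, we compare $\gamma'^{k^i}$ with $\gamma^n$:
\[
\frac{(\gamma')^{k^i}}{\gamma^{n}} = \frac{\omega^{jk^i}\beta^{nk^i}}{\beta^{k^i n}} = \omega^{jk^i}.
\]
This quotient lies in $L$, since $L$ is a field containing $\gamma$ and $\gamma'$ and $\gamma \ne 0$. It is also a $q$-th root of unity, because $\omega^q=1$.

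It remains to check that it is not $1$. Since $\omega$ is a primitive $q$-th root of unity, $\omega^{jk^i}=1$ holds if and only if $q \mid jk^i$. Because $\gcd(k,q)=1$, this is equivalent to $q \mid j$, which is excluded by hypothesis. So $\omega^{jk^i} \in \mu_q(L)\setminus\{1\}$, contradicting $\mu_q(L)=\{1\}$. Hence the two polynomials have no common nonzero root in $\algc{K}$, i.e.\ they are coprime up to factors of $x$.

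Two degenerate cases need a remark. If $P$ is constant or a monomial, there are no nonzero roots and the statement is trivial. When $n=0$, $P(\omega^j x^0)$ is a constant, so the statement is vacuous or trivial. There is no real obstacle here; the only point requiring care is using $\gcd(k,q)=1$ to rule out $\omega^{jk^i}=1$.
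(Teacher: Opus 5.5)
Your proof is correct and is essentially the paper's argument. The paper also takes a nonzero common root $\beta$, puts $\alpha_1=\beta^{k^i}$ and $\alpha_2=\omega^j\beta^n$, notes that $\alpha_1^n/\alpha_2^{k^i}\in L$ is a $q$-th root of unity and hence equals $1$, and concludes $\omega^{jk^i}=1$, which is impossible. You compute the ratio directly as $\omega^{jk^i}$ (the reciprocal of the paper's), which is slightly more streamlined, and your separate remark on $n=0$ is unnecessary because the main argument already covers that case.
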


\begin{proof}
  Suppose that $0 \ne \beta \in \algc{K}$ is a common root of $P(x^{k^i})$ and $P(\omega^j x^{n})$.
  Then there exist roots $\alpha_1$,~$\alpha_2$ of $P$ in $L$ such that $\alpha_1 = \beta^{k^i}$ and $\alpha_2 = \omega^j \beta^n$.
  Then $\alpha_1^{n q} = \beta^{k^i n q} = \alpha_2^{k^i q}$.
  Hence, the ratio $\alpha_1^{n} /\alpha_2^{k^i} \in L$ is a $q$-th root of unity, and our assumption on $L$ implies $\alpha_1^{n} = \alpha_2^{k^i}$.
  Now $\beta^{k^i n} = \alpha_1^{n} = \alpha_2^{k^i} = \omega^{j k^i}\beta^{n k^i}$.
  Since $\beta \ne 0$, we have $\omega^{j k^i}=1$, which is impossible because $\gcd(q,k^i)=1$ and $q \nmid j$.
\end{proof}

To a Mahler series with multiplicative coefficients and a suitable prime $q$, we can now associate a related $k$-regular series $G_q$.

\begin{lemma} \label{l:Gq_regular}
  Let $F \in K\llbracket x \rrbracket$ be a $k$-Mahler series with multiplicative coefficients.
  Let $L \subseteq \algc{K}$ be the field obtained by adjoining all roots of $\md_k F$ to the prime field of $K$.
  Let $q$ be a prime, let $\omega \in \mu_q^*(\algc{K})$, and define
  \[
  G_q(x) \coloneqq \sum_{j=0}^{q-1} F(\omega^j x) - q f(q) F(x^q).
  \]
  Then $G_q$ has coefficients in $K$.
  If $q \nmid k$ and $\mu_q^*(L) = \emptyset$, then $G_q$ is $k$-regular.
\end{lemma}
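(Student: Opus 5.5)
The plan is to compute $G_q$ explicitly, observe that it is a series in $x^{q^2}$, and then squeeze its Mahler denominator between \cref{l:divide-support} and \cref{t:mahler-denominators}. \Cref{l:q-roots} will then force the denominator to be negligible, and \cref{t:abs-regular} gives regularity.

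\emph{Coefficients.} Since $\sum_{j=0}^{q-1}\omega^{jn}$ equals $q$ if $q\mid n$ and $0$ otherwise, we get
\[
G_q(x)=q\sum_{n\ge1}\bigl(f(qn)-f(q)f(n)\bigr)x^{qn}.
\]
So the coefficients lie in $K$. If $q\nmid n$, then multiplicativity gives $f(qn)=f(q)f(n)$. Hence only $n=qm$ survives, and
\[
G_q(x)=\widetilde G(x^{q^2}) \quad\text{with}\quad \widetilde G=q\sum_m\bigl(f(q^2m)-f(q)f(qm)\bigr)x^m .
\]

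\emph{Denominator bounds.} I would work over $\algc K$ and at the end descend via \cref{l:regular-descent-field}. Note that $\md$ over $\algc K$ divides $\md$ over $K$, so it suffices to show that the $\algc K$-denominator is negligible. Put $P=\md_k F$.

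First, $G_q$ is $k$-Mahler, being a linear combination of Mahler series. By \cref{l:divide-support}, $\md_k G_q=Q(x^{q^2})$ for some $Q\in\algc K[x]$.

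Second, we bound $\md_k G_q$ from above:
\begin{itemize}
\item Since $\omega$ has order $q$ coprime to $k$, \cref{t:mahler-denominators}\ref{md:root-of-unity} gives $\md_k(F(\omega^jx))\sim P(\omega^jx)$.
\item Substituting $x^q$ into a Mahler equation for $F$ shows $\md_k(F(x^q))\mid P(x^q)$.
\item Iterating \cref{t:mahler-denominators}\ref{md:sum} over the $q+1$ summands then yields a negligible $A$ and some $e\ge0$ with
\[
Q(x^{q^2})\;\Big|\;A(x)\prod_{i=0}^{e}\prod_{j=0}^{q-1}P(x^{qk^i})P(\omega^jx^{k^i}).
\]
\end{itemize}

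\emph{Conclusion.} Factor $Q=Q_0Q_1$, where $Q_0$ collects the roots in $\{0\}\cup\rucommon(\algc K)$ and $Q_1$ has none. Since $\gcd(q,k)=1$, a nonzero $\beta$ lies in $\rucommon$ iff $\beta^{q^2}$ does; raising to the power $q^2$ does not change the $k$-part of the order. Therefore $Q_1(x^{q^2})$ has no negligible roots and is coprime to $A$. So $Q_1(x^{q^2})$ divides the double product.

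Next, $q\ne2$, because $-1\in\mu_2^*(L)$ always, and the hypothesis $\mu_q^*(L)=\emptyset$ excludes this. Hence $\gcd(2k,q)=1$, and \cref{l:q-roots} applies. It forces $Q_1=\lambda x^n$, hence $Q_1$ is constant. Thus every root of $\md_kG_q=Q_0(x^{q^2})$ is $0$ or lies in $\rucommon$, by the same order argument. So $\md_k G_q$ is negligible, and \cref{t:abs-regular} shows $G_q$ is $k$-regular over $\algc K$, hence over $K$.

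The main obstacle I expect is the bookkeeping in the middle step. The $\sim$ and $\preceq$ relations must combine correctly over $\algc K$, since $F(\omega^j x)$ is only defined there, so that the negligible factors are genuinely separated from the product to which \cref{l:q-roots} is applied.
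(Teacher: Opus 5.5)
Your proposal is correct and follows the paper's proof essentially step for step. Both compute that $G_q$ is supported on exponents divisible by $q^2$, bound $\md_k G_q$ via \cref{l:divide-support} and \cref{t:mahler-denominators}, use \cref{l:q-roots} to force negligibility, and conclude with \cref{t:abs-regular} and \cref{l:regular-descent-field}. Your deviations are small refinements: you apply \cref{l:q-roots} to the whole non-negligible factor $Q_1$ rather than root by root to $x-\alpha^{q^2}$, and you note explicitly that $\mu_2^*(L)=\{-1\}\neq\emptyset$ rules out $q=2$, which the paper needs for the hypothesis $\gcd(2k,q)=1$ of \cref{l:q-roots} but leaves implicit.
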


\begin{proof}
  We work over $\algc{K}$, so that $F(\omega^j x) \in \algc{K}\llbracket x \rrbracket$ is $k$-Mahler for all $j$.
  Recall that
  \begin{equation} \label{eq:sums-of-powers-ru}
  \sum_{j=0}^{q-1} \omega^{nj} =
  \begin{cases}
  q &\text{if $q \mid n$},\\
  0 &\text{if $q \nmid n$}.
  \end{cases}
  \end{equation}

  Using first \eqref{eq:sums-of-powers-ru} and then $f(qm)=f(q)f(m)$ whenever $q \nmid m$, we obtain
  \[
  G_q(x) = \sum_{m=1}^\infty \big(q f(qm) - q f(q) f(m)\big) x^{qm} = \sum_{m=1}^\infty \big(q f(q^2 m) - q f(q)f(qm)\big)x^{q^2m}.
  \]
  It follows that $G_q(x)$ is supported on $\{\, x^{q^2 m} : m \ge 1 \,\}$.

  \Cref{l:divide-support} shows $\md_{k} G_q=D(x^{q^2})$ for some polynomial $D$.
  Now \ref{md:sum} and \ref{md:root-of-unity} of \cref{t:mahler-denominators}, together with \ref{div-sup:denom} of \cref{l:divide-support}, imply 
  \begin{equation} \label{eq:Gq-md}
  \md_{k} G_q \preceq \md_{k}\big(F(x^q)\big) \cdot \prod_{j=0}^{q-1} \md_{k}\big(F(\omega^{j} x)\big) \sim (\md_{k} F)(x^q) \cdot \prod_{j=0}^{q-1}  (\md_{k} F)(\omega^{j} x).
  \end{equation}

  Suppose $0 \ne \alpha \in \algc{K}$ is a root of $\md_k G_q = D(x^{q^2})$.
  Then \cref{l:q-roots}, applied to $Q=x-\alpha^{q^2}$, shows that there exists a $\zeta \in \mu_q^*(\algc{K})$ such that $\zeta \alpha$ is not a root of
  \[
  \prod_{i=0}^{e} \prod_{j=0}^{q-1}  (\md_{k} F)(x^{qk^i}) \cdot (\md_{k} F)(\omega^{j} x^{k^i}),
  \]
  for any $e \ge 0$.
  \Cref{eq:Gq-md} therefore implies $\zeta \alpha \in \rucommon(\algc{K})$. 
  Hence, also $\alpha \in \rucommon(\algc{K})$, and $\md_k G_q$ is $k$-negligible.

  \Cref{t:abs-regular} implies that $G_q$ is $k$-regular, considered as a series over $\algc{K}$.
  Since $G_q \in K\llbracket x \rrbracket$, it is also $k$-regular over $K$ by \cref{l:regular-descent-field}.
\end{proof}

Finally, we can prove the main result of this section.

\begin{proof}[Proof of \cref{p:bad-q-mean-regular}]
  Let $q$ be a sufficiently large prime so that $G_q=\sum_{n=1}^\infty g(n)x^n$, defined as in \cref{l:Gq_regular}, is $k$-regular and such that there exists an $m \ge 2$ with $f(qm) \ne f(q)f(m)$.
  Since $f$ is multiplicative, there must exist $i \ge 2$ such that $f(q^i) \ne f(q) f(q^{i-1})$.
  Then, for every $m \ge 1$ with $q \nmid m$, we have
  \[
  g(q^i m) = q f(q^i m) - q f(q) f(q^{i-1} m) = f(m) q \big(f(q^i) - f(q) f(q^{i-1})\big) = f(m) qc,
  \]
  with $c \coloneqq f(q^i) - f(q) f(q^{i-1}) \ne 0$.

  The sequence $m \mapsto g\big(q^{i}(q m + 1)\big) = f(qm+1)qc$ is $k$-regular by \cref{l:regular-arith-progr}.
  Applying the same lemma once more, and dividing by the constant $c$, we deduce that $h\colon \N \to K$, defined by
  \[
  h(n) = \begin{cases}
    q f(n) &\text{if $n \equiv 1 \mod q$},\\
    0 &\text{if $n \nequiv 1 \mod q$},
  \end{cases}
  \]
  is $k$-regular.
  Considering the series $H(x)\coloneqq \sum_{n=1}^\infty h(n) x^n$, the Mahler denominator $\md_k H$ is therefore $k$-negligible by \cref{t:abs-regular}.

  We have 
  \begin{equation}\label{eq:H}
  H(x) = \sum_{m=1}^\infty q f(qm+1) x^{qm+1} = \sum_{j=0}^{q-1} \omega^{-j} F(\omega^j x).
  \end{equation}
  Since $\gcd(k,q)=1$, we have $\md_{k}\big( F(\omega^j x) \big) \sim (\md_{k} F)(\omega^j x)$ by \cref{t:mahler-denominators}.
  Rearranging \eqref{eq:H} gives
  \[
  \md_{k} F \preceq \prod_{j=1}^{q-1} (\md_{k} F)(\omega^{j}x). 
  \]

  However, for sufficiently large $q$, the polynomials $\md_{k} F$ and $(\md_{k} F)(\omega^j x^{k^{i}})$ are coprime for all $i \ge 0$ and $1 \le j \le q-1$ by \cref{l:coprime}.
  It follows that $\md_{k} F$ has all its roots in $\rucommon(\algc{K}) \cup \{0\}$, and therefore $F$ is $k$-regular by \cref{t:abs-regular}.
\end{proof}

\section{Rings of Mahler operators} \label{sec:rings-mahler-operators}

Before being able to deal with the second case of Mahler series with multiplicative coefficients, we need to study the ring of Mahler operators in some more detail.
The aim of the purely algebraic results in this section is to establish \cref{l:subring-intersection,l:constant-factor-mahler}.

For a ring $R$ and a ring endomorphism $\sigma$ of $R$, let $R[y;\sigma]$ denote the \defit{skew polynomial ring} (with coefficients written on the left).
Each element of $R[y;\sigma]$ has a unique representation of the form
\[
f = \sum_{i=0}^n a_i y^i,
\]
with $n \ge 0$ and $a_i \in R$, and the multiplication obeys $y^i r = \sigma^i(r)y$ for all $r \in R$.
The \defit{degree} $\deg(f)$ of $f$ is $\max\{\, i : a_i \ne 0 \,\}$ if $f \ne 0$, and $\deg(0)=-\infty$.
We recall some standard facts.

\begin{lemma} \label{l:skew-poly} Let $R$ be a ring with endomorphism $\sigma$.
  \begin{enumerate}
  \item \label{skew-poly:domain} If $R$ is a domain and $\sigma$ is injective, then $R[y;\sigma]$ is a graded domain, with grading induced by the degree.
  \item \label{skew-poly:division} If $f$,~$g \in R[y;\sigma]$ and the highest coefficient of $g$ is invertible in $R$, then there exist $q$,~$r \in R[y;\sigma]$ such that $f = qg + r$ and $\deg(r) < \deg(g)$.
  \item \label{skew-poly:pid} If $D$ is a division ring, and $\sigma$ is an endomorphism, then $D[y;\sigma]$ is a left PID.
  \end{enumerate}
\end{lemma}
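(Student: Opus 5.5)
All three parts are standard facts about skew polynomial rings, and the plan is to verify each directly from leading coefficients and the rule $y^i r = \sigma^i(r) y^i$. Throughout I read the commutation relation as $y^i r = \sigma^i(r) y^i$; this is clearly the intended meaning.

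For \ref{skew-poly:domain}, take nonzero $f = \sum_{i=0}^n a_i y^i$ and $g = \sum_{j=0}^m b_j y^j$ with $a_n, b_m \ne 0$. Expanding $fg$ with the commutation rule, the only contribution in degree $n+m$ is $a_n \sigma^n(b_m) y^{n+m}$. Because $\sigma$ is injective, $\sigma^n(b_m) \ne 0$, and because $R$ is a domain, $a_n\sigma^n(b_m) \ne 0$. Hence $\deg(fg) = \deg f + \deg g$ and $fg \ne 0$. The same expansion shows that $R y^i \cdot R y^j \subseteq R y^{i+j}$. So $R[y;\sigma] = \bigoplus_i R y^i$ is graded by degree, and it is a domain.

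For \ref{skew-poly:division}, induct on $\deg f$, following the usual long-division argument. If $\deg f < \deg g$, take $q = 0$ and $r = f$. Otherwise let $\deg f = n \ge m = \deg g$, let $g$ have leading coefficient $b$, which is invertible, and let $f$ have leading coefficient $a$.
\begin{itemize}
  \item \emph{Choosing the correction term.} Since $\sigma$ is a ring endomorphism, $\sigma^{n-m}(b)$ is also invertible, with inverse $\sigma^{n-m}(b^{-1})$.
  \item \emph{Killing the top coefficient.} The element $c\,y^{n-m} g$, where $c = a\,\sigma^{n-m}(b)^{-1}$, has leading term $c\,\sigma^{n-m}(b) y^n = a y^n$. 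Therefore $f - c y^{n-m} g$ has degree less than $n$.
  \item \emph{Concluding.} The induction hypothesis applied to $f - c y^{n-m} g$ gives the quotient and remainder for $f$.
\end{itemize}
The only point needing care is that invertibility survives $\sigma^{n-m}$, which holds for any ring homomorphism.

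For \ref{skew-poly:pid}, first observe that $\sigma$ is injective, since $D$ is a division ring and $\sigma(1) = 1$. Now let $I$ be a nonzero left ideal, and choose $0 \ne g \in I$ of minimal degree. Every nonzero element of $D$ is invertible, so \ref{skew-poly:division} applies to $g$. For any $f \in I$, write $f = qg + r$ with $\deg r < \deg g$. Then $r = f - qg$ lies in $I$, so minimality of $\deg g$ forces $r = 0$. Thus $I = R[y;\sigma] g$ is principal. Together with \ref{skew-poly:domain}, which makes $D[y;\sigma]$ a domain, this shows it is a left PID.

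No step here is a genuine obstacle. The place most prone to error is the side on which quotients are taken: we must multiply $g$ on the left by $c y^{n-m}$, because that is the side on which left ideals absorb multiplication and on which $\sigma$ acts compatibly.
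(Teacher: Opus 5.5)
Your proof is correct and matches what the paper does: the paper only says these facts are easy to check and cites Berrick--Keating for the same three arguments you give (leading coefficients, long division whenever the leading coefficient of $g$ is invertible, and a minimal-degree generator for left ideals), while you write them out directly with coefficients on the left. One small slip: in part~(3) the conclusion should read $I = D[y;\sigma]\,g$, not $I = R[y;\sigma]\,g$.
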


\begin{proof}
  These statements are easy to check and can be found in \cite[Chapter~3.2]{berrick-keating00} or \cite[\S 1.2]{mcconnell-robson01}, but note that in both texts coefficients are written on the right.

  Specifically, statement~\ref{skew-poly:domain} follows from \cite[Lemma 3.2.5]{berrick-keating00} (with the grading being obvious), the division algorithm~\ref{skew-poly:division} is described in \cite[Section~3.2.6]{berrick-keating00} (only over division rings, but the more general claim is shown in the same way), and statement~\ref{skew-poly:pid} follows from \cite[Theorem~3.2.10]{berrick-keating00}.
\end{proof}

Every left PID $R$ is a left Ore domain and therefore admits a (classical) left quotient division ring $Q_l(R)$, the elements of which can be represented as $d^{-1}r$ with $r \in R$, $0 \ne d \in R$ \cite[Chapter~10]{lam99}.

If $\sigma$ is an injective endomorphism of $R$, then the universal property of localizations yields an extension to an endomorphism $\sigma$ of $Q_l(R)$ such that $\sigma(d^{-1}r) = \sigma(d)^{-1}\sigma(r)$.
In this way, the ring $R[y;\sigma]$ embeds into $Q_l(R)[y;\sigma]$, which in turn embeds into $Q_l(R[y;\sigma])$ (since $\sigma$ is injective and $R$ is a left Ore domain, also $R[y;\sigma]$ is a left Ore domain \cite[Theorem 10.28]{lam99}, so $Q_l(R[y;\sigma])$ indeed exists).

\begin{remark}
  In the context of Mahler operators, the left/right distinction as well as the careful consideration of quotient division rings is important.
  The ring $K(x)[y;\mahler_k]$ with $\mahler_k(x)=x^k$ ($k \ge 2$) is a left PID but not right noetherian (in fact not even right Ore, see \cite[Example~2.11(ii)]{mcconnell-robson01} for the case $k=2$, with the general case being analogous), and hence admits a left quotient division ring, but not a right quotient division ring.
  The ring $K[x][y;\mahler_k]$ is neither left nor right noetherian (see \cite[Example~2.11(iii)]{mcconnell-robson01} for $k=2$).
\end{remark}

Let $D$ be a division ring with endomorphism $\sigma$ (which is automatically injective). 
For $n \in \N$, let $D[y^n]$ be the $D$-subring of $D[y;\sigma]$ generated by $y^n$, that is, the ring of all polynomial expressions in $y^n$ with coefficients from $D$.
Since $y^n a = \sigma^n(a) y^n$ for all $a \in D$, one has $D[y^n] \cong D[z;\sigma^n]$.
In particular, the ring $D[y^n]$ is also a left PID, and therefore possesses a left quotient division ring $Q_l(D[y^n])$.
This left quotient ring $Q_l(D[y^n])$ can be canonically identified with the subring of $Q_l(D[y;\sigma])$ consisting of elements of the form $b^{-1} a$ with $a$,~$b \in D[y^n]$ and $b \ne 0$.

If $\sigma$ is an automorphism, then, by symmetry, there also exists the classical right quotient ring $Q_r(D[z;\sigma])$ and it can be canonically identified with $Q_l(D[z;\sigma])$.
To simplify the notation, we make the following definitions.

\begin{definition}
  Let $D$ be a division ring with automorphism $\sigma$.
  \begin{enumerate}
  \item By $D(y;\sigma)$ we denote the quotient division ring $Q_l(D[y;\sigma])=Q_r(D[y;\sigma])$ of the skew polynomial ring $D[y;\sigma]$.
  \item By $D(y^n) \subseteq D(y;\sigma)$ we denote the subring that is the left and right quotient division ring of the subring $D[y^n] \subseteq D[y;\sigma]$.
  \end{enumerate}
\end{definition}

We will need the following observation on intersecting infinitely many such subrings.

\begin{lemma} \label{l:subring-intersection}
  Let $D$ be a division ring and $\sigma$ an automorphism of $D$.
  Then, for every infinite subset $N \subseteq \N$,
  \[
  D = \bigcap_{n \in N} D(y^n) \ \subseteq\ D(y;\sigma).
  \]
\end{lemma}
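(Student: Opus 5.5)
The plan is to embed $D(y;\sigma)$ into a skew Laurent series ring, where membership in $D(y^n)$ becomes a condition on the support of a series. The inclusion $D \subseteq \bigcap_{n \in N} D(y^n)$ is trivial, so only the reverse inclusion needs work. Since $\sigma$ is an automorphism, we can form the skew Laurent series ring
\[
D(\!(y;\sigma)\!) = \Big\{\, \sum_{i \ge i_0} a_i y^i : i_0 \in \Z,\ a_i \in D \,\Big\},
\]
with multiplication determined by $y^i a = \sigma^i(a) y^i$ for all $i \in \Z$. Here the automorphism hypothesis is used, since negative powers of $y$ require $\sigma^{-1}$.

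The first step is to check that $D(\!(y;\sigma)\!)$ is a division ring. Any nonzero element can be written as $a_{i_0} y^{i_0}(1 - s)$ with $a_{i_0} \ne 0$ and $s$ having only positive $y$-exponents. The factor $a_{i_0} y^{i_0}$ is invertible, and $1-s$ has the inverse $\sum_{j \ge 0} s^j$. This geometric series converges formally, because the coefficient of each fixed power of $y$ receives contributions from only finitely many $j$. The same computation, applied to a nonzero series supported on $n\Z$, produces an inverse that is again supported on $n\Z$. Thus
\[
D(\!(y^n)\!) \coloneqq \Big\{\, \sum_i a_i y^i \in D(\!(y;\sigma)\!) : a_i = 0 \text{ unless } n \mid i \,\Big\}
\]
is a sub-division ring.

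The second step is to use the universal property of the classical quotient ring (Ore localization). The inclusion $D[y;\sigma] \hookrightarrow D(\!(y;\sigma)\!)$ sends nonzero elements to units, so it extends to a ring homomorphism $\iota\colon D(y;\sigma) \to D(\!(y;\sigma)\!)$, given by $b^{-1}a \mapsto b^{-1}a$. This map is injective because its domain is a division ring. For $a,b \in D[y^n]$ with $b \neq 0$, both $a$ and $b$ lie in $D(\!(y^n)\!)$, which is a division ring, so $\iota(b^{-1}a) \in D(\!(y^n)\!)$. This step needs the identification of $D(y^n)$ with the elements $b^{-1}a$, where $a,b \in D[y^n]$, inside $D(y;\sigma)$, which was recorded before the definition. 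Hence $\iota(D(y^n)) \subseteq D(\!(y^n)\!)$.

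To conclude, let $u \in \bigcap_{n \in N} D(y^n)$. For every $n \in N$, every exponent in the support of $\iota(u)$ is divisible by $n$. Since $N$ is infinite, any nonzero integer $i$ fails to be divisible by some $n \in N$ with $n > |i|$. So $\iota(u)$ is supported on $\{0\}$, which means $\iota(u) \in D$. Because $\iota$ is injective and restricts to the identity on $D$, it follows that $u \in D$.

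The main obstacle is the technical justification of the two structural facts: that $D(\!(y;\sigma)\!)$ is a division ring containing $D(\!(y^n)\!)$ as a sub-division ring, and that the Ore extension $\iota$ is compatible with the canonical identification of $Q_l(D[y^n])$ inside $D(y;\sigma)$. Both are standard, and I would cite them or verify them directly. The support argument at the end is then immediate.
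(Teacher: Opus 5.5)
Your proof is correct, but it takes a different route from the paper's.

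\textbf{The paper's route.} It stays inside the Ore quotient. It writes $a=f^{-1}g$ and picks a single $m\in N$ exceeding $\deg f$ and $\deg g$. It then uses that $D(y^m)$ is also a \emph{right} quotient ring, which is where surjectivity of $\sigma$ enters, to write $a=rs^{-1}$ with $r,s\in D[y^m]$. This gives $gs=fr$. Comparing the lowest $y$-degree blocks of length $m$ yields $gd=fc$, where $c,d$ are the lowest coefficients of $r,s$, hence $a=cd^{-1}\in D$.

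\textbf{Your route.} You use surjectivity instead to build the skew Laurent series division ring $D(\!(y;\sigma)\!)$. You embed $D(y;\sigma)$ into it via the universal property of Ore localization. Membership in $D(y^n)$ then becomes the condition that the support lies in $n\Z$. All of your steps check out:
\begin{itemize}
\item the factorization $a_{i_0}y^{i_0}(1-s)$;
\item the closure of $D(\!(y^n)\!)$ under inverses;
\item the injectivity of $\iota$;
\item the use of the left-fraction description of $D(y^n)$.
\end{itemize}

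\textbf{What each buys.} The paper's argument is shorter and self-contained: no completion, no universal property, one large element of $N$. It needs both left and right fractions. Yours relies on standard but nontrivial facts about skew Laurent series. In exchange, it uses only left fractions and gives a concrete normal form for elements of $D(y;\sigma)$, in which the statement becomes a triviality about supports.

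The two arguments are closer than they look. Your embedding also works with a single large $m$: if $u=f^{-1}g$ with $\deg f,\deg g<m$ and $\iota(u)$ is supported on $m\Z$, then $fu=g$ splits into disjoint degree blocks $[mj,mj+m)$. This forces $u\in D$, which is essentially the paper's block comparison.
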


\begin{proof}
  Clearly $D \subseteq \bigcap_{n \in N} D(y^n)$, and we have to show the converse inclusion.

  Let $a \in \bigcap_{n \in N} D(y^n)$.
  Without restriction $a \ne 0$.
  Let $a = f^{-1} g$ for some $n \in N$ and $f$,~$g \in D[y^n]$.
  Choose $m \in N$ such that $m > \max\{\deg_y(f), \deg_y(g)\}$.
  Since $a \in D(y^m)$, there exist $r$,~$s \in D[y^m]$ such that $f^{-1}g = rs^{-1}$.
  Then $gs=fr$.
  Write $r=c y^{m m_r} + y^{m(m_r+1)} \widetilde{r}$ and $s=d y^{m m_s} + y^{m(m_s+1)} \widetilde{s}$ with $m_r$, $m_s \ge 0$, with $c$,~$d \in D \setminus \{0\}$ and $\widetilde{r}$,~$\widetilde{s} \in D[y^m]$.
  The degree bounds on $g$ and $f$ imply $g d y^{m m_s} = f c y^{m m_r}$, and in turn $gd=fc$.
  We get $a=f^{-1}g=cd^{-1} \in D$.
\end{proof}

\begin{remark} \label{rem:non-surjectivity}
  The surjectivity in the previous lemma is necessary.
  Indeed, if $\sigma$ is injective but not surjective, then $D[y^n]$ is still a left PID, and one can consider the intersection of left quotient rings $\bigcap_{n \ge 1} Q_l(D[y^n])$ in $Q_l(D[y;\sigma])$.
  Let $d \in D \setminus \sigma(D)$. Since $c\coloneqq y^{-n} (\sigma^{n-1}(d)y^n) = y^{-m} (\sigma^{m-1}(d)y^m)$ for all $m$,~$n \ge 1$, the element $c$ is contained in the intersection.
  But since $y^{n} c = \sigma^{n-1}(d) y^n$ we must have $c \not\in D$, as otherwise $\sigma(c)=d$.
\end{remark}

\cref{rem:non-surjectivity} explains why we will consider Mahler operators whose coefficients are Puiseux polynomials.

\begin{definition}
  \begin{enumerate}
  \item By $\puipol{K}{x}$ we denote the ring of Puiseux polynomials with non-negative exponents and coefficients in the field $K$, that is, the ring of formal finite $K$-linear combinations of $x^q$ with $q \in \Q_{\ge 0}$.
  \item By $K(x^{\frac{1}{\infty}})$ we denote the quotient field of $\puipol{K}{x}$.
  \item By $\puiser{K}{x}$ we denote the field of Puiseux series with coefficients in $K$, that is, the ring of formal series of the form
  \[
  f=\sum_{k=k_0}^\infty a_{k/n} x^{k/n}, \qquad\text{with}\quad n \ge 1,\, k_0 \in \Z, \, a_{k/n} \in K.
  \]
  \end{enumerate}
\end{definition}

Algebraically, the ring $\puipol{K}{x}$ is the semigroup algebra of the semigroup $(\Q_{\ge 0},+)$.

Nonzero integers $k_1$, $\dots$,~$k_r \in \Z$ are \defit{multiplicatively independent} if the multiplicative subgroup of $\Q^\times$ generated by $k_1$, \dots,~$k_r$ is torsion-free of rank $r$.
Explicitly, if $k_1^{n_1}\cdots k_r^{n_r}=1$ for some $n_1$, \dots,~$n_r \in \Z$, then $n_1=\cdots=n_r = 0$.

\begin{proposition} \label{p:mahler-operators-algebraically}
  Let $k_1$, \dots, $k_r \in \Z_{\ge 2}$ be multiplicatively independent.
  \begin{enumerate}
    \item \label{mop-alg:polynomial}
    The ring of Mahler operators \[ K[x,\mahler_{k_1}, \dots, \mahler_{k_r}] \subseteq \End_K(\power{K}) \] is isomorphic to the iterated skew polynomial ring $K[x][y_1;\sigma_1]\dots[y_r;\sigma_r]$, where $\sigma_i(x)=x^{k_i}$ for all $i$, and $\sigma_i(y_j)=y_j$ for all $j < i$.

    \item \label{mop-alg:puiseux-polynomial}
    The ring of Mahler operators with non-negative Puiseux coefficients \[ K[x^{1/n}, n \ge 1, \mahler_{k_1}, \dots, \mahler_{k_r}] \subseteq \End_K\big(\puiser{K}{x}\big) \]  is isomorphic to the iterated skew polynomial ring $\puipol{K}{x}[y_1;\sigma_1]\dots[y_r;\sigma_r]$, where $\sigma_i(x)=x^{k_i}$ for all $i$, and $\sigma_i(y_j)=y_j$ for all $j < i$.
  \end{enumerate}
  
\end{proposition}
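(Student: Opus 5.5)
The plan is to build the evaluation map explicitly, check that it respects the defining relations, and then show it has trivial kernel by testing operators on a well-chosen family of power series. Take \ref{mop-alg:polynomial} first. Set $S \coloneqq K[x][y_1;\sigma_1]\dots[y_r;\sigma_r]$. First I check that each $\sigma_i$ is a well-defined injective ring endomorphism of $K[x][y_1;\sigma_1]\dots[y_{i-1};\sigma_{i-1}]$. It is given by $x \mapsto x^{k_i}$ and fixes the $y_j$ with $j<i$. The only relations to verify are $y_j x = x^{k_j} y_j$, and these are preserved because $\sigma_i(y_j)\sigma_i(x) = y_j x^{k_i}$ and $x^{k_jk_i} y_j = \sigma_i(x^{k_j} y_j)$. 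Next I define a $K$-algebra homomorphism $\Phi\colon S \to \End_K(\power{K})$ by sending $x$ to multiplication by $x$ and $y_i$ to $\mahler_{k_i}$. The universal property of skew polynomial rings, applied one variable at a time, says this is well defined provided $\mahler_{k_i}\circ x = x^{k_i}\circ \mahler_{k_i}$ and $\mahler_{k_i}\mahler_{k_j} = \mahler_{k_j}\mahler_{k_i}$. Both are immediate. Surjectivity onto the ring of Mahler operators holds by definition.

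The main step is injectivity. Every element of $S$ can be written uniquely as $\sum_{\mathbf{a}} P_{\mathbf{a}}(x)\, y_1^{a_1}\cdots y_r^{a_r}$. Here I can move all $y$'s to the right, because the $y_i$ commute with one another and satisfy $y_i P(x) = P(x^{k_i}) y_i$. This element acts by
\[
  G(x) \mapsto \sum_{\mathbf a} P_{\mathbf a}(x)\, G\big(x^{k^{\mathbf a}}\big), \qquad k^{\mathbf a} \coloneqq k_1^{a_1}\cdots k_r^{a_r}.
\]
Multiplicative independence says exactly that $\mathbf a \mapsto k^{\mathbf a}$ is injective. So the operator is $\sum_{m\in M} P_m(x)\,\mahler_m$, where $M$ is a finite set of pairwise distinct integers $m \ge 1$. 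It therefore suffices to prove that such an operator vanishes only when all $P_m = 0$.

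I expect this independence of distinct substitutions to be the main obstacle. To handle it, I apply the operator to $G = x^N$ for a large $N$, which gives $\sum_m P_m(x) x^{mN} = 0$. Choose $N > \max_m \deg P_m$. Then for distinct $m$ the polynomials $P_m(x)x^{mN}$ are supported in the pairwise disjoint intervals $[mN, mN+\deg P_m]$. Hence each term vanishes, and so $P_m = 0$ for all $m$.

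Part \ref{mop-alg:puiseux-polynomial} goes the same way with $\puipol{K}{x}$ in place of $K[x]$. Here $\sigma_i(x^q) = x^{k_i q}$ is an injective endomorphism of the semigroup algebra (in fact an automorphism), and $\mahler_{k_i}$ acts on $\puiser{K}{x}$ by $x^q \mapsto x^{k_iq}$. The relations hold as before. For injectivity I again test on $x^N$, with $N$ larger than every exponent occurring in the $P_m$. The supports $\{mN + q\}$ lie in disjoint intervals of $\Q_{\ge 0}$, so the same argument concludes.
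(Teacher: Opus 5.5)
Your proposal is correct and follows essentially the same route as the paper. Both arguments build the evaluation homomorphism $y_i \mapsto \mahler_{k_i}$ and prove injectivity by applying an operator to a single monomial $x^N$, with $N$ larger than every coefficient degree; multiplicative independence makes the exponents $k_1^{a_1}\cdots k_r^{a_r}$ pairwise distinct, so the resulting supports are disjoint.

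The differences are cosmetic. You check explicitly that the $\sigma_i$ and the homomorphism are well defined, which the paper calls clear. You also prove part (1) directly instead of deducing it from part (2) by restriction.
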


\begin{proof}
  It suffices to show \ref{mop-alg:puiseux-polynomial}, then \ref{mop-alg:polynomial} easily follows by restriction.

  Let $R\coloneqq K[x^{1/n}, n \ge 1, \mahler_{k_1}, \dots, \mahler_{k_r}] \subseteq \End_K\big(\puiser{K}{x}\big)$ denote the ring of Mahler operators with non-negative Puiseux coefficients.
  Clearly there is a surjective algebra homomorphism $\varphi\colon\puipol{K}{x}[y_1;\sigma_1]\dots[y_r;\sigma_r] \to R$, mapping $y_i$ to $\mahler_{k_i}$, and we have to show that $\varphi$ is injective.
  
  For this, it suffices to consider the action on monomials.
  Indeed, let
  \[
  f = \sum_{i_1,\dots,i_r=0}^m f_{i_1,\dots,i_r} y_1^{i_1} \cdots y_r^{i_r} \in \puipol{K}{x}[y_1;\sigma_1]\dots[y_r;\sigma_r], \quad\text{with}\quad f_{i_1,\dots,i_r} \in \puipol{K}{x}.
  \]
  Then $\varphi(f)(x^l) = \sum_{i_1,\dots,i_r=0}^m f_{i_1,\dots,i_r} x^{l k_1^{i_1} \cdots k_r^{i_r}}$.
  By multiplicative independence of $k_1$, \dots,~$k_r$, the numbers $k_1^{i_1} \cdots k_r^{i_r}$ are pairwise distinct for $0 \le i_1$, \dots,~$i_r \le m$.
  Choosing $l$ sufficiently large, namely $l > \deg(f_{i_1,\dots,i_r})$ for all $0 \le i_1$, \dots,~$i_r \le m$, we see that $\varphi(f)(x^l)=0$ implies $f_{i_1,\dots,i_r}=0$, and hence $f=0$.
\end{proof}

\begin{lemma} \label{l:constant-factor}
  Let $D$ be a division ring with commuting automorphisms $\sigma_y$, $\sigma_z$.
  \begin{enumerate}
  \item \label{constant-factor:ring} Extending $\sigma_z$ to $D[y;\sigma_y]$ by acting trivially on $y$, and similarly extending $\sigma_y$ to $D[z;\sigma_z]$, we can identify $D[y;\sigma_y][z;\sigma_z] = D[z;\sigma_z][y;\sigma_y]$. We have $yz=zy$.
  \item \label{constant-factor:claim} Let $f =\sum_{i=0}^n f_i z^i \in D[y;\sigma_y][z;\sigma_z]$ with $f_i \in D[y;\sigma_y]$ and $\sigma_z$ extended to act trivially on $y$.
  If $f \in D(z;\sigma_z)[y;\sigma_y] g$ for some $g \in D[y;\sigma_y]$, then $f_0 \in D[y;\sigma_y] g$.
  \end{enumerate}
\end{lemma}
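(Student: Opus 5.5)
For part \ref{constant-factor:ring}, I will construct the iterated skew polynomial ring $D[y;\sigma_y][z;\sigma_z]$ with $\sigma_z$ extended to $D[y;\sigma_y]$ by $\sigma_z(\sum a_i y^i) = \sum \sigma_z(a_i) y^i$. The first step is to check that this is a ring endomorphism. This reduces to $\sigma_z(y a) = \sigma_z(\sigma_y(a) y) = \sigma_z\sigma_y(a) y$ agreeing with $\sigma_z(y)\sigma_z(a) = y\sigma_z(a) = \sigma_y\sigma_z(a) y$, which holds because $\sigma_y\sigma_z=\sigma_z\sigma_y$. Every element then has a unique normal form $\sum a_{ij} y^i z^j$ with $a_{ij}\in D$, and the relations are $ya=\sigma_y(a)y$, $za=\sigma_z(a)z$ and $zy=\sigma_z(y)z=yz$. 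The same holds in the symmetric construction $D[z;\sigma_z][y;\sigma_y]$. Both rings are therefore the $D$-ring presented by these relations, with the same $D$-basis $\{y^iz^j\}$, so the identity on normal forms is a ring isomorphism.

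For part \ref{constant-factor:claim}, the idea is to set $z=0$. Both $D[y;\sigma_y]$ and $D(z;\sigma_z)$ sit inside $D(z;\sigma_z)[y;\sigma_y]$, where $\sigma_y$ is extended to $D(z;\sigma_z)$ via the universal property of the quotient ring, fixing $z$. The hypothesis reads $f = h g$ with $h = \sum_i h_i y^i$ and $h_i \in D(z;\sigma_z)$.

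The key step is to clear denominators. Each $h_i$ can be written with a common left denominator, $h_i = c^{-1} u_i$ with $0\ne c\in D[z;\sigma_z]$ and $u_i \in D[z;\sigma_z]$; this uses the left Ore condition in the left PID $D[z;\sigma_z]$. Since $\sigma_y$ fixes $z$ and acts coefficientwise, $c$ commutes with $y$ up to applying $\sigma_y$ to its coefficients. To keep the bookkeeping simple I will write $h = c^{-1} u$ with $u=\sum u_i y^i\in D[y;\sigma_y][z;\sigma_z]$, pulling $c^{-1}$ out on the left. This gives $c f = u g$ in the ring $D[y,z]\coloneqq D[y;\sigma_y][z;\sigma_z]$.

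Next I factor out powers of $z$. Write $c = z^a c'$ with $c'(0)\ne 0$, meaning the $z^0$-coefficient $c'_0 \in D$ is nonzero. Powers of $z$ can be moved to the left freely, since $z$ is central with respect to $y$ and normalizes $D$. I will reduce to the case $a=0$ by showing $u \in z^a D[y,z]$ and cancelling, using that $D[y,z]$ is a domain with $z$ acting injectively. Concretely, the $z^j$-coefficients of $ug$ for $j<a$ are $\sum$ products of the $z$-coefficients $u_{(l)}$ of $u$ with $\sigma_z^{l}(g)$. Inducting on $j$, these vanishing forces $u_{(j)}\sigma_z^j(g)=0$, hence $u_{(j)}=0$ since $g\ne 0$. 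If $g=0$ then $f=0$ and the claim is trivial.

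Once $a=0$, I compare $z^0$-coefficients of $cf=ug$, giving $c_0 f_0 = u_{(0)} g$ with $c_0\in D^\times$. Therefore $f_0 = c_0^{-1}u_{(0)} g \in D[y;\sigma_y] g$.

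The main obstacle I expect is the denominator-clearing step. It requires handling the noncommutativity carefully: writing $h$ with a left denominator in $D[z;\sigma_z]$ that can be factored out of the sum over $y^i$. This works because left multiplication by $c^{-1}$ commutes appropriately with the $y$-grading. Equivalently, $D(z;\sigma_z)[y;\sigma_y]$ is the left localization of $D[y,z]$ at $D[z;\sigma_z]\setminus\{0\}$, and I would verify this Ore-set property first.
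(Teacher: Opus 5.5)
Your proof is correct and follows the paper's argument: clear a common left denominator $c \in D[z;\sigma_z]$ to get $cf = ug$, then compare coefficients at the lowest power $z^a$ occurring in $c$. Your intermediate step of showing $u \in z^a D[y;\sigma_y][z;\sigma_z]$ and cancelling is harmless but unnecessary: comparing $z^a$-coefficients directly gives $c_a\sigma_z^a(f_0) = u_{(a)}\sigma_z^a(g)$, and applying $\sigma_z^{-a}$ finishes the proof (this also removes the need to treat $g=0$ separately).
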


\begin{proof}
  \ref{constant-factor:ring} This follows since $\sigma_y$ and $\sigma_z$ commute.

  \smallskip
  \ref{constant-factor:claim}
  By assumption $af=bg$ with $a \in D[z;\sigma_z]$ and $b \in D[z;\sigma_z][y;\sigma_y]$.
  Let $a = \sum_{i=0}^n a_i z^i$ with $a_i \in D$, and $b = \sum_{i=0}^m b_i z^i$ with $b_i \in D[y;\sigma_y]$.

  Let $j \ge 0$ be minimal with $a_j \ne 0$.
  Since $g$ is constant in $z$, a comparison of coefficients of $z^j$ in $af=bg$ shows that $a_j z^j f_0 = b_j z^j g$.
  It follows that $f_0 = z^{-j} (a_j^{-1}b_j) z^{j} g = \sigma_z^{-j}(a_j^{-1} b_j) g \in D[y;\sigma_y]g$.
\end{proof}

We will later apply the proposition with two multiplicatively independent Mahler operators $y=\mahler_k$ and $z=\mahler_l$ and $D=K(x^{\frac{1}{\infty}})$.
For ease of reference, we also record this special case.

\begin{lemma} \label{l:constant-factor-mahler}
  Let $k$,~$l \ge 2$ be multiplicatively independent.
  Let
  \[
  \cL = \sum_{i=0}^n \cL_i \mahler_l^i \ \in\  K(x^{\frac{1}{\infty}})[\mahler_k,\mahler_l],
  \]
  with $\cL_i \in K(x^{\frac{1}{\infty}})[\mahler_k]$.
  If $\cL \in K(x^{\frac{1}{\infty}},\mahler_l)[\mahler_k] \cM$ for some $\cM \in K(x^{\frac{1}{\infty}})[\mahler_k]$, then $\cL_0 \in K(x^{\frac{1}{\infty}})[\mahler_k] \cM$.
\end{lemma}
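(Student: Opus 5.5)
The plan is to deduce the statement directly from \cref{l:constant-factor}\,\ref{constant-factor:claim}, by realizing the ring of Mahler operators with Puiseux coefficients as an iterated skew polynomial ring over the division ring $D \coloneqq K(x^{\frac{1}{\infty}})$. Let $\sigma_y, \sigma_z$ be the field endomorphisms of $D$ with $\sigma_y(x^{q}) = x^{kq}$ and $\sigma_z(x^q)=x^{lq}$ for $q \in \Q_{\ge 0}$, extended to the quotient field. Both are automorphisms of $D$, since $x^q \mapsto x^{q/k}$ (resp. $x^{q/l}$) gives an inverse on $\puipol{K}{x}$ and hence on $D$; this is exactly why Puiseux coefficients are used (\cref{rem:non-surjectivity}). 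Both are induced by multiplication on the exponent group $\Q$, so they commute.

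First I will identify $K(x^{\frac{1}{\infty}})[\mahler_k,\mahler_l]$ with $D[y;\sigma_y][z;\sigma_z]$, where $\sigma_z$ acts trivially on $y$. By \cref{p:mahler-operators-algebraically}\,\ref{mop-alg:puiseux-polynomial}, the subring with coefficients in $\puipol{K}{x}$ is the iterated skew polynomial ring $\puipol{K}{x}[y;\sigma_y][z;\sigma_z]$. Passing to coefficients in $D$ means localizing at the nonzero elements of $\puipol{K}{x}$. These form a left Ore set, because every element of $D[y;\sigma_y][z;\sigma_z]$ can be written with a common left denominator: move coefficients to the left using $y^i z^j d = \sigma_y^i\sigma_z^j(d) y^i z^j$. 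So I interpret $K(x^{\frac{1}{\infty}})[\mahler_k,\mahler_l]$ as $D[y;\sigma_y][z;\sigma_z]$. If the paper intends the concrete operator ring acting on Puiseux series, I still need injectivity, and it follows from the same monomial argument: clear denominators on the left and apply the injectivity from \cref{p:mahler-operators-algebraically}.

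Next, $K(x^{\frac{1}{\infty}},\mahler_l)$ should be read as $D(z;\sigma_z)$, and $K(x^{\frac{1}{\infty}},\mahler_l)[\mahler_k]$ as $D(z;\sigma_z)[y;\sigma_y]$, with $\sigma_y$ extended to $D(z;\sigma_z)$ by fixing $z$. This extension is legitimate: $\sigma_y$ extends to an automorphism of $D[z;\sigma_z]$ fixing $z$, since it commutes with $\sigma_z$ (\cref{l:constant-factor}\,\ref{constant-factor:ring}), and then, being injective, it extends to the quotient division ring. By \cref{l:constant-factor}\,\ref{constant-factor:ring} we also have $D[y;\sigma_y][z;\sigma_z] = D[z;\sigma_z][y;\sigma_y]$, and this embeds into $D(z;\sigma_z)[y;\sigma_y]$. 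So the hypothesis $\cL \in K(x^{\frac{1}{\infty}},\mahler_l)[\mahler_k]\,\cM$ becomes $f \in D(z;\sigma_z)[y;\sigma_y]\, g$ with $f = \cL$ and $g = \cM \in D[y;\sigma_y]$. Expanding $f = \sum_i \cL_i z^i$ with $\cL_i \in D[y;\sigma_y]$ matches the notation of \cref{l:constant-factor}\,\ref{constant-factor:claim}, which gives $\cL_0 \in D[y;\sigma_y]\,\cM$, the claim.

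The main obstacle I expect is the bookkeeping of the identifications rather than any real argument. Concretely, I must check two things:
\begin{itemize}
\item the various rings sit compatibly inside one common ring, namely $Q_l$ of $D[z;\sigma_z][y;\sigma_y]$, so that the membership hypothesis has an unambiguous meaning;
\item the coefficient expansion in $\mahler_l$ is the one used in \cref{l:constant-factor}, that is, with $\mahler_l$ written to the right of the $\cL_i$.
\end{itemize}
The second point is fine, because $z$ commutes with $y$ and the $\cL_i$ lie in $D[y;\sigma_y]$, so any other ordering of an expansion can be rewritten into this form.
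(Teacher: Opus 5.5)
Your proposal is correct and takes the paper's route. The paper gives no separate argument for this lemma; it records it as the special case of \cref{l:constant-factor}\,\ref{constant-factor:claim} with $D=K(x^{\frac{1}{\infty}})$, $y=\mahler_k$ and $z=\mahler_l$, using \cref{p:mahler-operators-algebraically} to identify the operator ring. Your extra bookkeeping is exactly what the paper leaves implicit: $\sigma_y,\sigma_z$ are commuting automorphisms of $D$ because Puiseux exponents can be divided by $k$ and $l$, and clearing left denominators reduces injectivity to the Puiseux-polynomial case.
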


\section{Mahler series with completely multiplicative primes.} \label{sec:mult-mahler-completely-multiplicative}

The goal of this section is to prove the following theorem, which allows us to deal with the second case of Mahler series with multiplicative coefficients (the first one has been dealt with in \cref{sec:mult-mahler-non-compl-mult}).

\begin{theorem} \label{t:special-mult-implies-regular}
  Let $K$ be a field of characteristic $0$, let $k \ge 2$, and let $F = \sum_{n=1}^\infty f(n) x^n \in K \llbracket x \rrbracket$ be a $k$-Mahler series.
  Suppose that there exists $q \in \Z_{\ge 3}$ such that
  \begin{itemize}
  \item $f(qn)=f(q)f(n)$ for all $n \in \N$;
  \item for the splitting field $L$ of $\md_k F$ over the prime field of $K$ it holds that $\mu_q(L) = \{1 \}$,
  \item $\gcd(q,k)=1$,
  \end{itemize}
  then $F$ is $k$-regular.
\end{theorem}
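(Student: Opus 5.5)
Since $f(qn)=f(q)f(n)$ for all $n$, the series $F$ satisfies the identity
\[
\sum_{j=0}^{q-1} F(\omega^j x) = q f(q) F(x^q), \qquad \omega \in \mu_q^*(\algc{K}).
\]
We may rewrite this as $q\,\Delta_0^{(q)}(F)(x^q) = q f(q) F(x^q)$, that is, $\Delta_0^{(q)} F = f(q) F$. So $F$ is ``$q$-Mahler-like'' in a degenerate sense: the Cartier operator $\Delta^{(q)}_0$ acts by a scalar. The plan is to combine this with a minimal $k$-Mahler equation and exploit the multiplicative independence of $k$ and $q$ through the operator algebra of \cref{sec:rings-mahler-operators}.

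First, take the minimal inhomogeneous $k$-Mahler operator for $F$. Let $\cM \in K(x^{\frac1\infty})[\mahler_k]$ be a generator of the left ideal of operators annihilating $F$ modulo Puiseux polynomials. This is possible because $K(x^{\frac1\infty})[\mahler_k]$ is a left PID by \cref{l:skew-poly}; we work with annihilators of $F$ in $\puiser{K}{x}$ modulo $K(x^{\frac1\infty})$. Applying $\Delta_0^{(q)}$ to $\cM F = A$ and using $\Delta_0^{(q)}F=f(q)F$ together with $\Delta^{(q)}_0(P(x)G(x^q)) = \Delta^{(q)}_0(P)G$ produces new equations. Equivalently, substituting $x\mapsto \omega^j x$ and summing, we see that the operator obtained from $\cM$ by $x\mapsto x^{1/q}$ (that is, conjugation $\mahler_q^{-1}\cM\mahler_q$, which is legal with Puiseux coefficients) again annihilates $F$ modulo Puiseux polynomials. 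This is the point where Puiseux coefficients and the surjectivity in \cref{l:subring-intersection} matter.

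Second, I would deduce that $\cM$ is essentially invariant under the twist $\sigma_q\colon x\mapsto x^q$. Minimality gives $\mathcal{M}^{\sigma_q^{\pm1}} \in K(x^{\frac1\infty})[\mahler_k]\cM$, and by degree equality the two operators agree up to a left unit. Iterating over powers of $q$, the coefficient ratios of $\cM$, normalized by the leading coefficient, lie in $K(x^{\frac1\infty})$ and are fixed by $\sigma_q$. In the two-variable ring $K(x^{\frac1\infty})[\mahler_k,\mahler_q]$ this can be phrased as an operator $\cL=\cL_0+\cL_1\mahler_q$ with $\cL_0\in K(x^{\frac1\infty})[\mahler_k]\cM$-type divisibility, so \cref{l:constant-factor-mahler} should transfer the relation to one involving $\cM$ alone. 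Alternatively, writing the relevant elements in $D(y^n)$ for infinitely many $n$ and applying \cref{l:subring-intersection}, one forces the constant term, and hence $P_0$, to be $\sigma_q$-stable up to the relevant normalisation.

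Third, I would convert this invariance into information on roots. If the leading constant coefficient $P_0$ has a nonzero root $\alpha$, then $\alpha^{q^i}$ or $\alpha^{1/q}$-type conjugates $\omega^j\alpha$ must also be roots. The hypothesis $\mu_q(L)=\{1\}$ together with \cref{l:coprime} (for $\omega\alpha$ versus $\alpha$) rules out all roots except roots of unity. \Cref{p:md-for-rational}-style multiplicity arguments (\cref{l:kn-subst-multiplicity}) then exclude roots of unity of order coprime to $k$. Hence $\md_k F$ is negligible, and \cref{t:abs-regular} gives $k$-regularity. I expect the main obstacle to be the second step: controlling the left unit relating $\cM$ and its $\sigma_q$-twist, and making the passage from the minimal inhomogeneous operator to the Mahler denominator rigorous. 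The first thing I would try is normalising $\cM$ to have constant term $1$ in $\mahler_k$ and comparing constant terms using \cref{l:constant-factor}.
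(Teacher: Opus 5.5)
\textbf{There is a genuine gap: the claim in your Step 1 is false in general, not merely unjustified.} You claim that $\mahler_q^{-1}\cM\mahler_q$, the operator with coefficients $P_i(x^{1/q})$, again sends $F$ to a rational function. Your derivation does not give this:
\begin{itemize}
\item Substituting $x\mapsto\omega^jx$ in $\cM F=A$ twists the coefficients to $P_i(\omega^jx)$. Unless $k\equiv 1 \bmod q$, it also permutes the twists $F(\omega^{j}x^{k^i})$. So summing over $j$ does not produce $\cM$ applied to $\sum_j F(\omega^jx)=qf(q)F(x^q)$.
\item Equivalently, for $q\nmid r$, the term $\Delta_0^{(q)}(x^rF(x^{k^i}))$ involves the sections $\Delta_s^{(q)}F$ with $s\neq 0$, and the hypothesis says nothing about these. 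Your reformulation $\Delta_0^{(q)}F=f(q)F$ is correct, but it does not propagate through $\cM$.
\end{itemize}

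\textbf{Counterexample.} Take $k=4$, $q=3$ and $F=x\prod_{i\ge0}(1+x^{3\cdot 4^i})$.
\begin{itemize}
\item $F$ is supported on $n\equiv 1\bmod 3$, so $f(3n)=0=f(3)f(n)$.
\item $F$ satisfies $x^3F(x)=(1+x^3)F(x^4)$. Hence $\md_4F$ is a power of $x$, so $L=\Q$ and $\mu_3(L)=\{1\}$.
\item $F$ is not rational, since it has $0/1$ coefficients and non-periodic support.
\end{itemize}
So all hypotheses hold and the minimal inhomogeneous operator is $\cM=x^3-(1+x^3)\mahler_4$. But $(x-(1+x)\mahler_4)F=\tfrac{x(1-x^2)}{1+x^3}F$ is not rational. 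Hence $\cM^{\sigma_3^{-1}}$ is not a left multiple of $\cM$.

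Consequences for the later steps:
\begin{itemize}
\item Step 2 concludes that the coefficient ratios of $\cM$ are $\sigma_q$-fixed, hence constant. This fails in the example.
\item Step 3 therefore has nothing to rest on.
\item Independently, Step 3 never explains how the rational right-hand side $A$ of $\cM F=A$ contributes roots to $\md_kF$. Controlling that contribution is exactly the delicate part.
\end{itemize}

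\textbf{The missing idea.} You need to produce an \emph{exact} annihilator of $F$ in the two-operator ring, rather than prove invariance of $\cM$. The paper proceeds as follows.
\begin{enumerate}
\item Replace $q$ by a power $q'=q^m$ so that \cref{c:stabilized-mahler-op-constant} applies.
\item Consider $G=\sum_{j=1}^{q'-1}F(\omega^jx)=f(q')q'F(x^{q'})-F$. By \cref{t:mahler-denominators} and \cref{l:coprime}, $\gcd(\md_kF,\md_kG)\preceq 1$.
\item If $\cL_0\in K[x,\mahler_k]$ has constant coefficient $\md_kG$ and annihilates $G$, then $(\cL_0-f(q')q'\cL_0\mahler_{q'})F=0$. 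The corollary then gives $\cL_0F\in K(x)$. This is where \cref{l:subring-intersection,l:constant-factor-mahler} actually enter.
\item \Cref{l:close-to-regular} puts all roots of $\md_kF$, and hence of $\md_k(\cL_0F)$, among the roots of unity.
\item \Cref{p:md-for-rational} upgrades this to negligibility for the rational function $\cL_0F$. This yields $\cL_1=Q_0+Q_1\mahler_k$ with $Q_0$ negligible and $\cL_1\cL_0F=0$.
\item Therefore $\md_kF \mid Q_0\cdot\md_kG$, and the gcd condition forces $\md_kF$ to be negligible.
\end{enumerate}
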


Note that we assume $f(qn)=f(q)f(n)$ for all $n$, not just those with $q \nmid n$.
On the other hand, we assume this property only for the fixed prime $q$, that is, there is no assumption for $f$ to be multiplicative.

The skew polynomial ring $K[x^{\frac{1}{\infty}}][y;\sigma^n]$ with $\sigma(x)=x^q$ faithfully acts on $\puiser{K}{x}$ from the left by Mahler operators by \cref{p:mahler-operators-algebraically}.
We identify
\[
K[x^{\frac{1}{\infty}}][y;\sigma^n] \cong K[x^{\frac{1}{\infty}},\mahler_q^n] \ \subseteq\  \End\big(\puiser{K}{x}\big).
\]
By $K(x^{\frac{1}{\infty}},\mahler_q^n)$ we denote the quotient division ring of $K[x^{\frac{1}{\infty}},\mahler_q^n]$.

The division rings $K(x^{\frac{1}{\infty}},\mahler_q^n)$ no longer embed into $\End\big(\puiser{K}{x}\big)$.
However, as discussed in \cref{sec:rings-mahler-operators}, they are all naturally subrings of $K(x^{\frac{1}{\infty}},\mahler_q)$.

Let $F \in \power{K}$ be $k$-Mahler. A fortiori there exists $0 \ne \cM \in K(x^{\tfrac{1}{\infty}})[\mahler_k]$ such that $\cM F$ is rational. 
Since $K(x^{\tfrac{1}{\infty}})[\mahler_k]$ is a PID, there exists such an operator of minimal degree, and any other such operator is a multiple of it.
\Cref{l:frac-mahler-eqn} shows that we can actually take the operator in $K(x)[\mahler_k]$.
Normalizing it further, to be in $K[x,\mahler_k]$ with coprime polynomials as coefficients, this operator becomes unique, up to scalars, and is a \defit{minimal inhomogeneous $k$-Mahler operator} of $F$.

\begin{proposition} \label{p:annihilator-gen}
  Let $k$,~$q \ge 2$ be multiplicatively independent.
  Let $F \in K\llbracket x \rrbracket$ be a $k$-Mahler series and $\cM \in K[x,\mahler_k]$ a minimal inhomogeneous $k$-Mahler operator of $F$.

  For all $n \ge 1$, let
  \[
  I_n\coloneqq \ann_{K(x^{\frac{1}{\infty}})[\mahler_q^n,\mahler_k]}(F) \coloneqq \big\{\, \mathcal L \in K(x^{\frac{1}{\infty}})[\mahler_q^n,\mahler_k] : \mathcal L F = 0 \,\big\},
  \]
  and let $\widetilde{I}_n \coloneqq K(x^{\frac{1}{\infty}},\mahler_q^n)[\mahler_k] I_n$ be the extension of the left ideal $I_n$ to $K(x^{\frac{1}{\infty}},\mahler_q^n)[\mahler_k]$.

  Then there exists $d \ge 1$ such that $\widetilde{I}_{dn} = K(x^{\frac{1}{\infty}},\mahler_q^{dn})[\mahler_k] \cM$ for all $n \ge 1$.
\end{proposition}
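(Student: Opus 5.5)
Throughout, write $D_n \coloneqq K(x^{\frac{1}{\infty}},\mahler_q^n)$. The plan is to show that $\widetilde I_n$ always contains $\cM$, that its generator has degree that stabilizes along multiples of a suitable $d$, and that the stable generator has coefficients in $K(x^{\frac{1}{\infty}})$. Minimality of $\cM$ then identifies it with $\cM$.

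\emph{Setup and the easy inclusion.} The operator $\mahler_k$ acts on $D_n$ by $x \mapsto x^k$ and fixes $\mahler_q^n$. On $K(x^{\frac{1}{\infty}})$ this is an automorphism, which is the reason for working with Puiseux coefficients. So $D_n[\mahler_k]$ is a skew polynomial ring over a division ring, hence a left PID by \cref{l:skew-poly}, and $\widetilde I_n = D_n[\mahler_k]\, L_n$ for a generator $L_n$ that we normalize to be monic in $\mahler_k$. Since $\cM F = R$ is rational, the operator $A_n \coloneqq R(x^{q^n}) - R(x)\mahler_q^n$ annihilates $R$. Hence $A_n\cM \in I_n$. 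As $A_n$ is a nonzero element of $D_n$, it is a unit there, so $\cM \in \widetilde I_n$. Consequently $L_n$ right-divides $\cM$ and $\deg_{\mahler_k} L_n \le \deg \cM$.

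\emph{Stabilization.} If $n \mid m$, then $D_m \subseteq D_n$ and $I_m \subseteq I_n$. Therefore $L_m \in D_n[\mahler_k] L_n$ and $\deg L_m \ge \deg L_n$. These degrees are bounded by $\deg \cM$, so choose $d$ with $\deg L_d$ maximal among all $n$. Then for every $n \ge 1$ we have $\deg L_{dn} = \deg L_d$ and $L_{dn} = c\, L_d$ with $c \in D_d$. Comparing monic leading coefficients gives $c = 1$, so $L \coloneqq L_d = L_{dn}$ for all $n$. The coefficients of $L$ then lie in $\bigcap_{n} D_{dn}$, which equals $K(x^{\frac{1}{\infty}})$ by \cref{l:subring-intersection}, applied with $\sigma = \mahler_q$, an automorphism of $K(x^{\frac{1}{\infty}})$, and the infinite set $d\N$. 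Thus $L \in K(x^{\frac{1}{\infty}})[\mahler_k]$.

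\emph{Key step: $LF$ is rational.} Since $L \in \widetilde I_d$, clearing left denominators by the left Ore property produces some $0 \ne s = \sum_j s_j \mahler_q^{dj}$, with $s_j \in K(x^{\frac{1}{\infty}})$, such that $sL \in I_d$ after absorbing the remaining factors into elements of $K(x^{\frac{1}{\infty}})[\mahler_q^d,\mahler_k]$. Because $L$ commutes with $\mahler_q^d$ up to applying $\sigma^{d}$ to its coefficients, this should yield a nontrivial relation of the form $\sum_j s_j \mahler_q^{dj}(G) = 0$ for $G \coloneqq LF$, with perhaps $L$ replaced by a twisted conjugate. I expect this to be the main obstacle: moving $s$ past $L$ needs care with left/right fractions.

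Granting that, $G$ is a Puiseux series satisfying a nontrivial $q^d$-Mahler equation. It is also $k$-Mahler, being a Mahler operator with Puiseux coefficients applied to the $k$-Mahler series $F$. Substituting $x^N$ for suitable $N$ and clearing denominators, as in \cref{l:frac-mahler-eqn}, gives a genuine power series that is both $k$- and $q^d$-Mahler. Here $k$ and $q^d$ are multiplicatively independent, so this series is rational by \cref{t:cobham-mahler}, and hence $LF$ is rational.

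\emph{Conclusion.} By minimality of $\cM$ within the left PID $K(x^{\frac{1}{\infty}})[\mahler_k]$, we get $\cM \in K(x^{\frac{1}{\infty}})[\mahler_k] L$, so $\deg \cM \le \deg L$. Combined with the first step, $L$ and $\cM$ differ by a unit factor, and therefore $\widetilde I_{dn} = D_{dn}[\mahler_k]\,\cM$ for all $n \ge 1$.
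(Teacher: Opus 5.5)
Your proposal follows the paper's proof essentially step for step. The common skeleton is: principal monic generators of $\widetilde I_n$ in the left PID $D_n[\mahler_k]$, then degree stabilization along multiples of $d$ with monicity forcing $L_{dn}=L_d$, then \cref{l:subring-intersection} to put the coefficients in $K(x^{\frac{1}{\infty}})$, then \cref{t:cobham-mahler} to make $LF$ rational, and finally minimality of $\cM$. Your variations are harmless. You show $\cM\in\widetilde I_n$ at the outset, which gives the bound $\deg L_n\le\deg\cM$ instead of the paper's $\deg\cM+1$ via $\cL\cM$. You also get the $k$-Mahler property of $LF$ from finite-dimensionality of the $K(x^{\frac{1}{\infty}})$-span of the $\mahler_k^iF$. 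The paper instead uses the factorization $\cL\cM=\cH_n\cG$ and a second application of the intersection lemma.

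Three corrections are needed.

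\emph{The flagged obstacle is not one.} $sL$ is a product in a ring that acts on $\puiser{K}{x}$ by composition. So $sL\in I_d$ says literally $0=(sL)F=s(LF)=\sum_j s_j\,\mahler_q^{dj}(G)$. Nothing has to be moved past $L$, and no twisted conjugate arises. This is the paper's observation that $\cA\cG$ annihilates $F$ while $\cA$ does not involve $\mahler_k$. You should state this rather than ``granting'' it.

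\emph{The divisibility in your conclusion is reversed.} Minimality of $\cM$ means every operator sending $F$ into $K(x^{\frac{1}{\infty}})$ is a left multiple of $\cM$. Hence $L\in K(x^{\frac{1}{\infty}})[\mahler_k]\,\cM$, and that is what yields $\deg\cM\le\deg L$. The inclusion you wrote, $\cM\in K(x^{\frac{1}{\infty}})[\mahler_k]\,L$, would give the opposite inequality, which you already had from the first step.

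\emph{The case $R=\cM F=0$.} Here your $A_n$ is zero, but then $\cM\in I_n$ directly.
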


\begin{proof}
  For all $n \ge 1$, let $D_n \coloneqq K(x^{\frac{1}{\infty}},\mahler_q^{n})$.
  Keep in mind that $\mahler_q^n=\mahler_{q^n}$. 
  Since $D_n$ is a division ring, the ring $D_n[\mahler_k]$ is a PID (by \cref{p:mahler-operators-algebraically} and \cref{l:skew-poly}).
  Thus, each left ideal $\widetilde{I}_n$ is principal, say $\widetilde{I}_n = D_n[\mahler_k] \cG_n$ with a monic $\cG_n \in D_n[\mahler_k]$.
  Since $\cM F$ is rational, there exists $\cL \in K[x,\mahler_k]$ with $\deg_{\mahler_k}(\cL) = 1$ such that $\cL \cM F = 0$ (\cref{exm:rational-is-mahler}).
  Since $\cL \cM \in \widetilde{I}_n$ for every $n$, we have that $\deg_{\mahler_k}(\cG_n) \le \deg_{\mahler_k}(\cM)+1$ is uniformly bounded for all $n$.
  
  If $m \mid n$, then $D_n \subseteq D_m$ and $I_n \subseteq I_m$, so in this case $\widetilde{I}_n \subseteq \widetilde{I}_m$ and therefore $\deg(\cG_m) \le \deg(\cG_n)$.
  Let $s \coloneqq \max\{\, \deg(\cG_n) : n \ge 1 \,\}$.
  If $d \ge 1$ is such that $\deg(\cG_d)=s$, then $\deg(\cG_{dn}) =s$ for all $n \ge 1$.

  We now show, in turn,
  \begin{enumerate}[label=(\roman*)]
  \item\label{ann:step1} $\cG_d = \cG_{dn}$ for all $n \ge 1$;
  \item\label{ann:step2} $\cG\coloneqq \cG_d \in K(x^{\frac{1}{\infty}})[\mahler_k]$ (that is, the coefficients of $\cG$ do not depend on $\mahler_q^d$);
  \item\label{ann:step3} $\cG F$ is rational; and
  \item\label{ann:step4} $D_{dn}[\mahler_k] \cM = D_{dn}[\mahler_k] \cG = \widetilde{I}_{dn}$ for all $n \ge 1$.
  \end{enumerate}

  \ref{ann:step1}
  Let $\cG_{dn} = \mahler_k^s + \sum_{i=0}^{s-1} g_{n,i} \mahler_k^i$ with $g_{n,i} \in D_{dn}$.
  Consider $\cH \coloneqq \cG_{dn} - \cG_{d} \subseteq D_d[\mahler_k]$.
  Then $\cH \in \widetilde{I}_d$ and $\deg_{\mahler_k}(\cH) < s$.
  By choice of $s$ and $d$, this implies $\cH = 0$.
  Thus, we have $\cG_{d}=\cG_{dn}$ for all $n \ge 1$. 

  \ref{ann:step2} The coefficients of $\cG_d=\cG_{dn}$ are contained in
  \[
  \bigcap_{n\ge 1} K(x^{\frac{1}{\infty}}, \mahler_q^{dn})= \bigcap_{n\ge 1} K(x^{\frac{1}{\infty}})(\mahler_q^{dn}).
  \]
  \Cref{l:subring-intersection} shows that this intersection is $K(x^{\frac{1}{\infty}})$.
  Thus, we have $\cG \in K(x^{\frac{1}{\infty}})[\mahler_k]$.

  \ref{ann:step3} 
  Since $\cL \cM \in \widetilde{I}_{dn}$ for all $n \ge 1$, we can write $\cL \cM = \cH_n \cG$ with $\cH_n \in D_{dn}[\mahler_k]$.
  Using again $\bigcap_{n \ge 1} D_{dn}=K(x^{\frac{1}{\infty}})$, it follows that $\cH_n \in K(x^{\frac{1}{\infty}})[\mahler_k]$.
  Since $\cH_n \cG F = \cL \cM F = 0$, \cref{l:frac-mahler-eqn} shows that $\cG F$ is $k$-Mahler.

  Starting from $\cG \in D_d[\mahler_k] I_d$, we can clear denominators to write $0 \ne \cA \cG = \sum_{i=0}^m \cB_i \cX_i$ with $\cX_i \in I_d$, $\cA \in K[x^{\frac{1}{\infty}},\mahler_q^d]$ and $\cB_i \in K[x^{\frac{1}{\infty}},\mahler_q^d, \mahler_k]$.
  Since each $\cX_i$ annihilates $F$, so does $\cA \cG$.
  As $\cA$ does not involve $\mahler_k$, \cref{l:frac-mahler-eqn} shows that $\cG F$ is $q$-Mahler.

  Since $k$ and $q$ are multiplicatively independent, Adamczewski and Bell's generalization of Cobham's Theorem to Mahler series (\cref{t:cobham-mahler}) implies that $\cG F$ is rational. 

  \ref{ann:step4}
  By definition of $\cM$, the operator $\cM$ divides $\cG$ in $K(x^{\frac{1}{\infty}})[\mahler_k]$.
  Since $\cM F$ is rational, we can easily find a nonzero $\cL'\coloneqq P(x) - Q(x) \mahler_q^d$ such that $\cL'\cM F = 0$ (\cref{exm:rational-is-mahler}).
  Thus, it holds that $\cL' \cM \in I_{d}$. But in $D_d[\mahler_k]$ now $\cL'$ is invertible, so that $\cM \in \widetilde{I}_d$.
  This forces $D_{dn}[\mahler_k] \cM = D_{dn}[\mahler_k] \cG$, and $\widetilde{I}_{dn} = D_{dn}[\mahler_k] \cG$ holds by definition of $\cG$.
\end{proof}

The observation in the previous proof that the intersection of $D_{dn}$ is $K(x^{\frac{1}{\infty}})$ is in fact non-trivial and hinges on the fact that the Mahler operators are bijective on $K(x^{\frac{1}{\infty}})$.
See \cref{rem:non-surjectivity}.

\begin{corollary} \label{c:stabilized-mahler-op-constant}
  Suppose $k$,~$q \ge 2$ are multiplicatively independent.
  Let $F \in \power{K}$ be a $k$-Mahler series.
  Then there exists an $n_0$, such that for all $n \ge n_0$, if
  \[
  \cL=\sum_{i=0}^N \cL_i \mahler_{q}^{i\cdot n!} \in K[x^{\frac{1}{\infty}}, \mahler_k, \mahler_q^{n!}] \qquad\text{with}\qquad \cL_i \in K[x^{\frac{1}{\infty}}, \mahler_k]
  \]
  is such that $\cL F = 0$, then $\cL_0 F \in K(x^{\frac{1}{\infty}})$.
\end{corollary}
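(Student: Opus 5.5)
We derive the corollary from \cref{p:annihilator-gen} combined with \cref{l:constant-factor-mahler}. Let $\cM \in K[x,\mahler_k]$ be a minimal inhomogeneous $k$-Mahler operator of $F$, and let $d \ge 1$ be as in \cref{p:annihilator-gen}. We take $n_0 \coloneqq d$. For $n \ge n_0$ we have $d \mid n!$, say $n! = dm$, so \cref{p:annihilator-gen} gives
\[
\widetilde{I}_{n!} = K(x^{\frac{1}{\infty}},\mahler_q^{n!})[\mahler_k]\, \cM .
\]

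Let $\cL = \sum_{i=0}^N \cL_i \mahler_q^{i\cdot n!}$ with $\cL F = 0$ be given. Then $\cL \in I_{n!} \subseteq \widetilde{I}_{n!}$, and hence $\cL \in K(x^{\frac{1}{\infty}},\mahler_q^{n!})[\mahler_k]\,\cM$. We now apply \cref{l:constant-factor-mahler} with $l \coloneqq q^{n!}$; this is legitimate because $q^{n!}$ and $k$ are still multiplicatively independent, and $\mahler_l = \mahler_q^{n!}$. The main care needed here is to check that the identification of rings matches the setting of \cref{l:constant-factor}. Concretely, we must verify two things. First, $\mahler_k$ and $\mahler_q^{n!}$ commute and act as commuting automorphisms of $D = K(x^{\frac{1}{\infty}})$; they are bijective there because fractional exponents are allowed. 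Second, the ring $K(x^{\frac{1}{\infty}},\mahler_l)[\mahler_k]$ sits inside the appropriate quotient ring as described in \cref{sec:rings-mahler-operators}. Both hold by \cref{p:mahler-operators-algebraically} and \cref{l:constant-factor}\ref{constant-factor:ring}. The lemma then yields $\cL_0 \in K(x^{\frac{1}{\infty}})[\mahler_k]\,\cM$.

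Finally, write $\cL_0 = \cA \cM$ with $\cA \in K(x^{\frac{1}{\infty}})[\mahler_k]$. Since $\cM F = R$ is a rational function, we get $\cL_0 F = \cA R$. Each Mahler operator $\mahler_k^j$ maps $K(x^{\frac{1}{\infty}})$ into itself, as does multiplication by its elements. Hence $\cA R \in K(x^{\frac{1}{\infty}})$, which proves the claim.
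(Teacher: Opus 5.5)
Your proposal is correct and follows the paper's proof essentially step for step. You take $n_0 = d$ from \cref{p:annihilator-gen}, use $d \mid n!$ to get $\cL \in I_{n!} \subseteq \widetilde{I}_{n!} = K(x^{\frac{1}{\infty}},\mahler_q^{n!})[\mahler_k]\,\cM$, apply \cref{l:constant-factor-mahler} with $l = q^{n!}$, and conclude from $\cM F \in K(x)$. Your extra checks, namely that $k$ and $q^{n!}$ are multiplicatively independent and that operators in $K(x^{\frac{1}{\infty}})[\mahler_k]$ map $K(x^{\frac{1}{\infty}})$ into itself, are correct details that the paper leaves implicit.
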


\begin{proof}
  As in \cref{p:annihilator-gen}, for $n \ge 1$, let
  \[
  I_n \coloneqq \ann_{K(x^{\frac{1}{\infty}})[\mahler_{q^n},\mahler_k]}(F) \quad\text{and}\quad \widetilde{I}_n \coloneqq K(x^{\frac{1}{\infty}},\mahler_{q^n})[\mahler_k] I_n.
  \]
  Let $\cM \in K[x,\mahler_k]$ be a minimal inhomogeneous $k$-Mahler operator of $F$.
  By \cref{p:annihilator-gen}, there exists $d \ge 1$ such that $\widetilde I_{dn} = K(x^{\frac{1}{\infty}},\mahler_{q^{dn}})[\mahler_k] \cM$ for all $n \ge 1$.

  Define $n_0 \coloneqq d$ and consider $n \ge n_0$, so that $d \mid n!$.
  Since $\cL F =0$, we have $\cL \in I_{n!} \subseteq \widetilde{I}_{n!}$.
  This means $\cL \in K(x^{\frac{1}{\infty}},\mahler_{q}^{n!})[\mahler_k]\cM$.
  \Cref{l:constant-factor-mahler} (with $l=q^{n!}$) shows $\cL_0 \in K(x^{\frac{1}{\infty}})[\mahler_k] \cM$.
  But then $\cL_0 F \in K(x^{\frac{1}{\infty}})$ by definition of $\cM$.
\end{proof}

\begin{lemma} \label{l:unit-root-avg}
  Let $\omega \in \mu_q^*(\algc{K})$.
  If $F$ and $q$ are as in \cref{t:special-mult-implies-regular}, then
  \begin{equation} \label{eq:unit-root-avg}
  \sum_{j=0}^{q-1} F(\omega^j x) = q f(q) F(x^q).
  \end{equation}
\end{lemma}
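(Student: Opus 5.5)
This is a direct coefficient comparison, using the orthogonality relation \eqref{eq:sums-of-powers-ru} from the proof of \cref{l:Gq_regular}. I write $F(x) = \sum_{n \ge 1} f(n) x^n$. Then
\[
\sum_{j=0}^{q-1} F(\omega^j x) = \sum_{n=1}^\infty f(n) \Big(\sum_{j=0}^{q-1} \omega^{nj}\Big) x^n .
\]
Since $\omega$ is a primitive $q$-th root of unity, the inner sum is $q$ if $q \mid n$ and $0$ otherwise. Note that this uses only that $\omega$ is primitive of order $q$; primality of $q$ is not needed, and \eqref{eq:sums-of-powers-ru} holds for any $q \ge 2$ with the same geometric-series argument. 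So the left side of \eqref{eq:unit-root-avg} becomes
\[
\sum_{m=1}^\infty q\, f(qm)\, x^{qm}.
\]

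Next I use the first hypothesis of \cref{t:special-mult-implies-regular}, namely $f(qm) = f(q) f(m)$ for all $m \in \N$. This holds for every $m$, not only for $m$ coprime to $q$, and that is exactly what is needed here. Substituting it gives
\[
\sum_{m=1}^\infty q f(q) f(m) x^{qm} = q f(q) F(x^q),
\]
which is the right side of \eqref{eq:unit-root-avg}.

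There is essentially no obstacle. The only points to check are the ones already noted: the full strength of the multiplicativity hypothesis at $q$ is used, and \eqref{eq:sums-of-powers-ru} is valid for composite $q$ as well. The identity is a priori an identity in $\algc{K}\llbracket x \rrbracket$, and its right side visibly lies in $\power{K}$.
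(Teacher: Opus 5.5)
Your proof is correct and is exactly the paper's argument: expand the sum, use the orthogonality relation for the primitive $q$-th root of unity $\omega$ to keep only the terms with $q \mid n$, and then substitute $f(qm)=f(q)f(m)$ for all $m$. Your remark that primality of $q$ is not needed is apt, because the paper later applies the lemma with the composite modulus $q'=q^m$.
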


\begin{proof}
  \Cref{eq:unit-root-avg} follows from
  \[
  \sum_{j=0}^{q-1} F(\omega^j x) = \sum_{n=1}^\infty f(n) \sum_{j=0}^{q-1} \omega^{nj} x^{n} = \sum_{n=1}^\infty f(qn) q x^{nq} = \sum_{n=1}^\infty f(q)f(n) q x^{nq} = f(q) q F(x^q). \qedhere
  \]
\end{proof}

The following is a first step towards $k$-regularity of $F$.

\begin{lemma} \label{l:close-to-regular}
  If $F$ and $q$ are as in \cref{t:special-mult-implies-regular} and $K$ is algebraically closed, then all roots of $\md_k F$ are contained in $\ru(\algc{K}) \cup \{0\}$.
\end{lemma}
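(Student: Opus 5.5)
The plan is to use the identity of \cref{l:unit-root-avg} as an extra $q$-Mahler-type relation and combine it with the denominator calculus of \cref{t:mahler-denominators}. Since $K$ is algebraically closed, $\omega\in\mu_q^*(K)$, and each $F(\omega^j x)$ is $k$-Mahler. Because $\gcd(q,k)=1$, \ref{md:root-of-unity} of \cref{t:mahler-denominators} gives $\md_k(F(\omega^j x))\sim(\md_k F)(\omega^j x)$. We may assume $f(q)\neq 0$, since otherwise \eqref{eq:unit-root-avg} already gives $\sum_j F(\omega^jx)=0$.

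Rearranging \eqref{eq:unit-root-avg} as
\[
F(x)=qf(q)F(x^q)-\sum_{j=1}^{q-1}F(\omega^jx),
\]
\ref{md:sum} of \cref{t:mahler-denominators}, together with the fact (proved as in \ref{md:mahler-op}) that $\md_k(F(x^q))$ divides $(\md_kF)(x^q)$, gives
\[
\md_kF\preceq(\md_kF)(x^q)\prod_{j=1}^{q-1}(\md_kF)(\omega^jx).
\]
Now take a nonzero root $\alpha$ of $\md_kF$ that is not a root of unity, aiming for a contradiction. Unwinding $\preceq$, $\alpha$ is a root of one of the following:
\begin{itemize}
\item a negligible polynomial, which is impossible since $\alpha$ is not a root of unity;
\item some $(\md_kF)(\omega^jx^{k^i})$ with $1\le j\le q-1$, which is excluded by \cref{l:coprime}, because $\mu_q(L)=\{1\}$ and $\gcd(k,q)=1$;
\item some $(\md_kF)(x^{qk^i})$.
\end{itemize}
So $\alpha^{qk^{i}}$ is again a root of $\md_kF$, and it is again not a root of unity.

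Iterating this produces an infinite sequence of roots $\alpha,\alpha^{qk^{i_1}},\alpha^{qk^{i_1}qk^{i_2}},\dots$, all roots of $\md_kF$. Since $\alpha$ is not a root of unity, the powers $\alpha^N$ for distinct $N$ are pairwise distinct, and the exponents strictly increase. This contradicts the finiteness of the root set. Hence every nonzero root of $\md_kF$ is a root of unity.

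The main obstacle is making the second case rigorous. The $\preceq$ relation contributes factors $(\md_kF)(\omega^jx^{k^i})$ with arbitrary $i$, and also compositions $x\mapsto\omega^j x^{k^i}$ stacked with $x^{q}$. One must check that \cref{l:coprime} (with $n=k^i$, in the direction appropriate to each factor) rules out every such common nonzero root of $\md_kF$. If the direct use of $\preceq$ is too lossy here, the fallback is to use \cref{c:stabilized-mahler-op-constant}. The idea is to apply a minimal annihilator of $F$ combined with $\mahler_q^{n!}$ via \eqref{eq:unit-root-avg} iterated $n!$ times, and read off that the $\mahler_q$-free part still annihilates $F$ up to a rational term; this controls the denominator directly.
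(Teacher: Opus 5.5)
Your argument is correct and is essentially the paper's proof. The paper iterates \eqref{eq:unit-root-avg} $N$ times and applies the pigeonhole principle to the roots $\alpha^{k^sq^N}$ as $N$ varies, whereas you use only the one-step relation and iterate on the roots themselves via $\alpha\mapsto\alpha^{qk^{i}}$; this is the same finiteness argument with lighter bookkeeping, since you need \cref{l:coprime} only with first index $0$ and $n=k^i$ rather than $n=q^ik^s$.

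The worry in your last paragraph is unfounded: unwinding $\preceq$ only substitutes $x^{k^s}$ into $(\md_kF)(x^q)\prod_{j=1}^{q-1}(\md_kF)(\omega^jx)$, so no stacked compositions occur, and the fallback via \cref{c:stabilized-mahler-op-constant} is not needed. Even a twist $\omega^{jk^b}$ picked up by tracing roots of unity through the proof of part \ref{md:root-of-unity} of \cref{t:mahler-denominators} is still a nontrivial $q$-th root of unity, so \cref{l:coprime} covers it as well.
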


\begin{proof}
  Let $\omega \in \mu_q^*(K)$.
  We use the assumption $K=\algc{K}$ to ensure $F(\omega^j x)$ is also a $k$-Mahler series over $K$.

  Rearranging \eqref{eq:unit-root-avg}, we find $F(x) = - \sum_{j=1}^{q-1} F(\omega^j x) + f(q) q F(x^q)$. Iterating, we obtain
  \[
  F(x) = - \sum_{i=0}^{N-1} \sum_{j=1}^{q-1} f(q)^i q^i F(\omega^j x^{q^i}) + f(q)^N q^N F(x^{q^N})
  \]
  for all $N \ge 1$.
  Let $P= \md_k F$.
  \Cref{t:mahler-denominators} and \cref{l:divide-support} show that $P$ divides
  \[
  \prod_{i,s=0}^e \prod_{j=1}^{q-1} P(\omega^j x^{q^i k^s}) \cdot \prod_{s=0}^e P(x^{k^s q^N}) \cdot A_1(x) 
  \]
  for some $e \ge 0$ and with $A_1$ being $k$-negligible.
  
  \Cref{l:coprime} shows that $P$ is coprime to $P(\omega^j x^{q^i k^s})$ for all $i$, $s$ and $j$ with $q \nmid j$ (except possibly for common roots at $0$).
  Thus, any root $\alpha$ of $P$ that is not in $\rucommon(\algc{K}) \cup \{0\}$ must have the property that for every $N \ge 1$ there exists an $s \ge 0$ such that also $P(\alpha^{k^{s} q^N}) = 0$.
  Since $P$ has only finitely many roots, the pigeonhole principle implies that there exist $N \ne N'$ and $s$,~$s'$ such that $\alpha^{k^s q^N} = \alpha^{k^{s'} q^{N'}}$.
  We get $\alpha^{k^{s'} q^{N'} - k^s q^N}=1$.
  Since $\gcd(k,q)=1$, we have $k^{s'} q^{N'} - k^s q^N \ne 0$, and hence $\alpha \in \ru(\algc{K})$.
\end{proof}

To obtain $k$-regularity of $F$, we need to show the stronger conclusion that all roots of $\md_k F$ are in fact contained in $\rucommon(\algc{K}) \cup \{0\}$, that is, to additionally rule out any roots in $\ru(\algc{K}) \setminus \rucommon(\algc{K})$.
We now show this trickier conclusion.

\begin{proof}[Proof of \cref{t:special-mult-implies-regular}]
  Without restriction, we may assume that $K$ is algebraically closed.
  First note that $q^m$ for $m \ge 1$ also satisfies the conditions imposed on $q$ in the statement of the theorem.
  Take $m \ge 1$ sufficiently large so that \cref{c:stabilized-mahler-op-constant} applies with $n_0=1$ for $q'=q^m$.
  Let $\omega \in \mu_{q'}^*(\algc{K})$.

  Define
  \[
  G(x)\coloneqq \sum_{j=1}^{q'-1} F(\omega x) = f(q') q' F(x^{q'}) - F(x) \in K\llbracket x \rrbracket,
  \]
  with the claimed equality holding by \cref{l:unit-root-avg}. 
  The series $G$ is also $k$-Mahler, because $K$-linear combinations of $k$-Mahler series are $k$-Mahler.
  \Cref{t:mahler-denominators} implies
  \[
  \md_{k} G \ \preceq\ \prod_{j=1}^{q'-1} (\md_k F)(\omega^{j} x).
  \]

  Recalling the definition of $\preceq$, \cref{l:coprime} shows that any common factor of $\md_{k} F$ and $\md_{k} G$ must have all its roots in $\rucommon(K) \cup \{0\}$.
  In other words, we have $\gcd(\md_k F, \md_k G) \preceq 1$.

  Let
  \[
  \cL_0 = \md_{k} G - \sum_{i=1}^l P_i(x) \mahler_{k^{i}} 
  \]
  with $l \ge 0$ and polynomials $P_1$, \dots,~$P_l \in K[x]$ be such that $\cL_0 G = 0$.
  Since $\cL_0 F = f(q')q' \cL_0\mahler_{q'} F$ by definition of $G$, it follows that
  \[
  (\cL_0 - f(q')q' \cL_0 \mahler_{q'}) F = 0.
  \]

  \Cref{c:stabilized-mahler-op-constant} now implies $\cL_0 F \in K(x^{\frac{1}{\infty}})$.
  Since $\cL_0$ does not involve fractional coefficients, even $\cL_0 F \in K(x)$, say $\cL_0 F = P/Q$ with coprime $P$,~$Q \in K[x]$.
  \Cref{l:close-to-regular} shows that $\md_k F$ has all roots in $\ru(K) \cup \{0\}$.
  Statement~\ref{md:mahler-op} of \cref{t:mahler-denominators} then implies that the same is true for $\md_k(\cL_0 F)=\md_k(P/Q)$.
  The $k$-Mahler denominator of a rational function never has roots in $\ru(K) \setminus \rucommon(K)$ by \cref{p:md-for-rational}, and hence $\md_k (\cL_0 F) \in \rucommon(K) \cup \{0\}$.
  Thus, there exists a $k$-Mahler operator $\cL_1 = Q_0(x) + Q_1(x) \mahler_k$ with $\cL_1 \cL_0 F =0$ and all roots of $Q_0$ in $\rucommon(K) \cup \{0\}$.

  Now $\md_k F$ divides $Q_0 \cdot \md_k G$, but also $\gcd(\md_k F,\md_k G) \preceq 1$.
  Thus, all roots of $\md_k F$ are in $\rucommon(K) \cup \{0\}$.
  \Cref{t:abs-regular} implies that $F$ is $k$-regular.
\end{proof}

\section{Finishing the proof}

In this final section, we put everything together to prove the main result of the paper.

\begin{proof}[Proof of~\cref{t:mult-mahler-main}]
  Let $F = \sum_{n=1}^\infty f(n) x^n \in \power{K}$ be a $k$-Mahler series with multiplicative coefficient sequence $f$.
  We first show that $F$ is $k$-regular.

  Without restriction, we may assume that the field $K$ is finitely generated over its prime field and that $\md_k F$ splits over $K$.
  In particular, the set of roots of unity in $K$ is finite.

  We distinguish two cases.
  Suppose first that for all arbitrarily large primes $q$ there exists $n \ge 2$ with $f(qn) \ne f(q)f(n)$.
  Then \cref{p:bad-q-mean-regular} shows that $F$ is $k$-regular.
  
  In the second case, there exist arbitrarily large primes $q$ such that $f(qn)=f(q)f(n)$ for all $n \ge 1$.
  Since $K$ is finitely generated, we can choose such a prime $q$ so that $\mu_q(K)=\{1\}$ and such that $\gcd(q,k)=1$.
  \Cref{t:special-mult-implies-regular} implies that $F$ is $k$-regular.

  Now that we know that $F$ is $k$-regular, \cref{p:explicit-form} shows the existence of $p$, $g$, $r$, and $\chi$ as in the statement.
\end{proof}

\begin{proof}[Proof of \cref{c:mult-mahler-nonprime}]
  \Cref{t:mult-mahler-main} shows that $F$ is $k$-regular.
  Now \cref{p:regular-distinct-primes} implies that $F$ is rational.
\end{proof}

There is one final observation about multiplicative eventually periodic functions, that we have mentioned in the introduction.

\begin{lemma} \label{l:mult-eventually-periodic}
  If $\chi \colon \N \to K$ is a multiplicative eventually periodic function, then it is either periodic or eventually zero.
\end{lemma}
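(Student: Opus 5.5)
Let $\chi$ be multiplicative and eventually periodic, say $\chi(n+P)=\chi(n)$ for all $n\ge N_0$, with period $P\ge 1$. The plan is to show that if $\chi$ is not eventually zero, then $\chi(n+P)=\chi(n)$ already holds for every $n\ge 1$, so $\chi$ is periodic. The tool is multiplicativity against a large auxiliary factor that is coprime to everything relevant and satisfies a congruence making the shift compatible with the period.

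First, suppose $\chi$ is not eventually zero. Since $\chi$ is eventually periodic, there is a residue class $c \bmod P$ on which $\chi$ is nonzero for all large $n$; fix $m_0\ge N_0$ with $m_0\equiv c \pmod P$ and $\chi(m_0)\neq 0$.

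Now fix arbitrary $n\ge 1$; we want $\chi(n)=\chi(n+P)$. The idea is to pick $m$ with the following properties:
\begin{itemize}
\item $m\ge N_0$ is large;
\item $\gcd(m,n(n+P))=1$;
\item $\chi(m)\neq 0$;
\item $mn\equiv m(n+P) \pmod P$, which is automatic;
\item $mn$ and $m(n+P)$ are large.
\end{itemize}
Then
\[
\chi(m)\chi(n)=\chi(mn)
\quad\text{and}\quad
\chi(m)\chi(n+P)=\chi(m(n+P)).
\]
Since $m(n+P)=mn+mP$, iterating the period gives $\chi(mn+mP)=\chi(mn)$ because $mn\ge N_0$. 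Cancelling $\chi(m)\neq 0$ yields $\chi(n)=\chi(n+P)$.

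The key step, and the main obstacle, is finding such an $m$: coprime to $n(n+P)$ with $\chi(m)\neq0$. The natural first try is $m=m_0+tP\cdot n(n+P)$ for $t\ge 0$, but it is coprime to $n(n+P)$ only if $m_0$ is, which need not hold. Instead I would use the class $c$ more cleverly. Write $m_0=uv$, where $u$ is the part of $m_0$ built from primes dividing $Pn(n+P)$ and $v$ is coprime to $Pn(n+P)$. Replacing $m_0$ by elements of its eventually-nonzero class, we want nonzero values with small common part. An alternative I would pursue if that fails: if $\chi(n)\neq\chi(n+P)$ for some $n$, use $m$ in the class of $1$ modulo $Pn(n+P)$, which is automatically coprime, and show $\chi$ is eventually nonzero on that class. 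To see this, note that $\chi(1)=1$ if $\chi\neq0$, and use $\chi(1+tPn(n+P))$. By periodicity, these values equal $\chi(1+sP)$ for large $s$ in a fixed residue class mod $P$, namely the class of $1$. So the whole task reduces to showing that $\chi$ is not eventually zero on the class $1 \bmod P$.

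For that reduction, take $m_0$ nonzero in class $c$ and suppose $\chi$ vanished on all large $n\equiv1\pmod P$. Choose a large prime $\ell\equiv1\pmod P$ not dividing $m_0$; such primes exist by Dirichlet's theorem, though the paper may avoid Dirichlet by arguing differently. Then $\chi(\ell m_0)=\chi(\ell)\chi(m_0)=0$. On the other hand, $\ell m_0\equiv m_0\pmod P$ is large, so periodicity gives $\chi(\ell m_0)=\chi(m_0)\neq0$, a contradiction. Hence $\chi$ is nonzero on large elements of the class $1\bmod P$. With these $m$ the argument above gives periodicity.
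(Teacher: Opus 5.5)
Your proof is correct. Its core is the same as the paper's: for a large multiplier $m$ coprime to $n(n+P)$ with $\chi(m)\neq 0$, cancel $\chi(m)$ in $\chi(m)\chi(n)=\chi(mn)=\chi(mn+mP)=\chi(m)\chi(n+P)$.

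The difference is in how you produce $m$. The paper takes a residue class $r \bmod d$ on which $\chi$ is eventually nonzero. It then uses Dirichlet's theorem to pick a large prime $p\equiv r\pmod d$. Primality gives coprimality to $n$ and $n+d$ once $p$ exceeds $n+d$. The paper does use Dirichlet here.

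You instead take $m\equiv 1 \pmod{Pn(n+P)}$, which makes coprimality automatic. The price is an extra step: showing that $\chi$ is eventually nonzero on the class $1 \bmod P$.

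You prove that step with a Dirichlet prime $\ell\equiv 1\pmod P$, but no prime is needed. For any $t\ge 1$, the integer $\ell=1+tPm_0$ is coprime to $m_0$. Since $m_0\ge N_0$, we get $\chi(\ell)\chi(m_0)=\chi(\ell m_0)=\chi(m_0+tPm_0^2)=\chi(m_0)\neq 0$, which forces $\chi(\ell)=1$.

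So your route gives a fully elementary, Dirichlet-free proof. It even shows that $\chi$ equals $1$ on all large integers congruent to $1 \bmod P$. The cost is one extra step compared with the paper's two-line argument.

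When you write it up, drop the abandoned first attempt via $m_0=uv$ and present only the class-$1$ argument.
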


\begin{proof}
  Let $N$, $d \ge 1$ be such that $\chi(n+d) = \chi(n)$ for all $n \ge N$.
  Suppose that $\chi$ is not eventually zero.
  By eventual periodicity, there exists an $r$ such that $\chi(r+md) \ne 0$ for all $m \ge 0$.
  Let $0 \le n < N$.
  Dirichlet's theorem on primes in arithmetic progressions implies that there exists a prime $p > N$ with $p \equiv r \mod d$. 
  Then
  \[
  \chi(p)\chi(n) = \chi(pn) = \chi(pn+pd) = \chi(p)\chi(n+d)
  \]
  implies $\chi(n)=\chi(n+d)$, since $\chi(p) \ne 0$.
\end{proof}

\begin{singlespace}
  \printbibliography
\end{singlespace}

\end{document}